\numberwithin{equation}{section}
\definecolor{my-black}{rgb}{0,0,0}
\definecolor{my-blue}{rgb}{0,0,0.8}
\definecolor{my-red}{rgb}{0.8,0,0}
\theoremstyle{plain} 
\newtheorem{lemma}{Lemma}[section]
\newtheorem{theorem}{Theorem}
\newtheorem*{theorem-with-no-label}{Theorem}
\newtheorem{corollary}{Corollary}[section]
\newtheorem{proposition}{Proposition}[section]
\theoremstyle{definition} 
\newtheorem{remark}{Remark}[section]
\theoremstyle{remark}
\newtheorem*{remark-with-no-label}{Remark}
\DeclareMathOperator{\real}{Re}
\DeclareMathOperator{\imag}{Im}
\DeclareMathOperator{\covol}{covol}
\DeclareMathOperator{\Tr}{Tr}
\DeclareMathOperator{\Nr}{N}
\DeclareMathOperator{\End}{End}
\DeclareMathOperator{\Hol}{Hol}
\DeclareMathOperator{\SL}{SL}
\DeclareMathOperator{\PSL}{PSL}
\DeclareMathOperator{\Or}{O}
\DeclareMathOperator{\sgn}{sgn}
\DeclareMathOperator{\disc}{disc}
\newcommand{\sm}{\smallsetminus}
\newcommand{\R}{\mathbb{R}}
\renewcommand{\H}{\mathbb{H}}
\newcommand{\C}{\mathbb{C}}
\newcommand{\N}{\mathbb{N}}
\newcommand{\Q}{\mathbb{Q}}
\newcommand{\Z}{\mathbb{Z}}
\renewcommand{\d}{\mathfrak{d}}
\renewcommand{\a}{\mathfrak{a}}
\renewcommand{\b}{\mathfrak{b}}
\newcommand{\Oo}{\mathcal{O}}
\newcommand{\Ss}{\mathcal{S}}
\newcommand{\Rr}{\mathcal{R}}
\newcommand{\Ii}{\mathcal{I}}
\newcommand{\Jj}{\mathcal{J}}
\title{Fourier non-uniqueness  sets from totally real number fields }
\author[Danylo Radchenko]{Danylo Radchenko}
\address{ETH Z{\"u}rich, Mathematics Department, R\"amistrasse 101, 8092 Z\"urich, Switzerland}
\email{danradchenko@gmail.com}
\author[Martin Stoller]{Martin Stoller}
\address{B{\^a}timent des Math{\'e}matiques, EPFL, Station 8, CH-1015 Lausanne, Switzerland}
\email{marstoller@gmail.com}
\date{\today}
\begin{document}

\begin{abstract}
Let $K$ be a totally real number field of degree $n \geq 2$. The inverse different of $K$ gives rise to a lattice in $\R^n$. We prove that the space of Schwartz Fourier eigenfunctions on $\R^n$ which vanish on the
``component-wise square root" of this lattice, is infinite dimensional. 
The Fourier non-uniqueness  set thus obtained is a discrete subset of the union of all spheres $\sqrt{m}S^{n-1}$ 
for integers $m \geq 0$ and, as $m \rightarrow \infty$,
there are $\sim c_{K} m^{n-1}$ many points on the $m$-th sphere for some explicit
constant $c_{K}$, proportional to the square root of the discriminant of $K$. This
contrasts a recent Fourier uniqueness result by Stoller \cite[Cor. 1.1]{S}. Using a different construction involving the codifferent of $K$, we prove an analogue for discrete subsets of ellipsoids. In special cases, these sets also lie on spheres with more densely spaced radii, but with fewer points on each.

We also study a related question about existence of Fourier interpolation formulas with nodes ``$\sqrt{\Lambda}$'' for general lattices  $ \Lambda \subset \R^n$. Using results about lattices in Lie groups of higher rank we prove that if $n \geq 2$ and a certain group $\Gamma_{\Lambda} \leq \PSL_2(\R)^n$ is discrete, then such interpolation formulas cannot exist.    Motivated by these more general considerations, we revisit the case of one radial variable 
and prove, for all $n \geq 5$ and all real $\lambda > 2$, Fourier interpolation 
results for sequences of spheres $\sqrt{2 m/ \lambda}S^{n-1}$, where $m$ ranges over 
any fixed cofinite set of  non-negative integers. The proof relies on a series of 
Poincar{\'e} type  for Hecke groups of infinite covolume and is similar to the one in 
\cite[\S 4]{S}. 
\end{abstract}

\maketitle
\tableofcontents

\section{Introduction}\label{sec:introduction}

The subject of this paper is motivated by recent work on Fourier uniqueness and
non-uniqueness pairs.  Broadly speaking, we are interested in the following general
question. Given a space $V$ of continuous integrable functions on $\R^n$ and two
subsets $A, B \subset \R^n$, when is it possible to recover any
function $f \in V$ from the restrictions $f|_A$ and $\widehat{f}|_B$
(where~$\widehat{f}$ denotes the Fourier transform of~$f$)?
In other words, we are interested in conditions on $A,B,V$, under which the
restriction map $f \mapsto (f|_A, \widehat{f}|_B)$ is injective. When
the map is injective, we say that $(A, B)$ is a (Fourier) uniqueness pair and if $A =
B$, we simply say that~$A$ is a (Fourier) uniqueness set. Conversely,
if the map is not injective, we call $(A,B)$ a non-uniqueness pair, and when $A=B$
we call $A$ a non-uniqueness set. Naturally, one would like the function space~$V$ to
be as large as possible and the sets~$A$ and~$B$ to be as small as possible, or
``minimal" in a certain sense.

A prototypical example of a minimal Fourier uniqueness set was found by Radchenko and
Viazovska in~\cite{RV}, where they proved that, when
$V = \Ss_{\text{even}}(\R)$ is the space of even Schwartz functions on the real line, 
the set $A = \sqrt{\Z_{+}}
:= \{ \sqrt{n}\,:\, n \in \Z_{\geq 0}\}$ is a uniqueness set and
established an interpolation theorem in this setting. The result is sharp
in the sense that no proper subset of $A$ remains a uniqueness set for
$\Ss_{\text{even}}(\R)$. Their proof was based
on the theory of classical modular forms, which is also well-suited to treat the case
$V = \Ss_{\text{rad}}(\R^n)$ of \emph{radial} Schwartz functions on $\R^n$ and the set $A =U_n := \cup_{m \in \N_0}{\sqrt{m}S^{n-1}}$. For the latter generalization, we refer to \S 2 in \cite{RS}, which deduces the result from~\cite{BRS}.

The second author recently proved  an interpolation formula \cite[Thm 1]{S} generalizing the one by Radchenko--Viazovska also to non-radial functions, that is, to the space $V = \Ss(\R^n)$ and the same set of concentric spheres $U_n$.  However, for $n>1$, it is no longer minimal. 
Indeed, the (related) interpolation formula in~\cite[Eq.~(4.1)]{RS} implies that
the space of $f \in \Ss(\R^n)$ satisfying $f(x)=\widehat{f}(x)=0$ for all $x\in 
\cup_{m \ge N}{\sqrt{m}S^{n-1}}$ is finite-dimensional for all $N$ and is in fact contained in 
$\mathcal{H}_{4N+2}\otimes W$ for some finite-dimensional space $W\subset 
\Ss_{\text{rad}}(\R^n)$, where $\mathcal{H}_k$ denotes the space of harmonic 
polynomials on $\R^n$ of degree $\le k$. Since a generic subset of $\dim 
\mathcal{H}_k$ points in $rS^{n-1}$ is an interpolation set for the space 
$\mathcal{H}_k$ (in the sense that any polynomial $p\in \mathcal{H}_k$ is uniquely 
determined by its values on $\dim \mathcal{H}_k$ generic points), this implies that 
there is a uniqueness set properly 
contained in $U_n$ that contains only finitely many points on spheres with 
radius $\le \sqrt{N}$.

In fact, it was recently proved by the second author and Ramos in \cite[Rmk 4.1, Cor 4.1]{RS} that \emph{any} discrete and sufficiently  uniformly distributed  subset $D \subset U_n$ remains a uniqueness set for $\Ss(\R^n)$. Here, ``sufficiently" means that $D \cap \sqrt{m}S^{n-1}$ contains at least $  C_n m^{c_n m}$ many points.  

We contrast these Fourier uniqueness results by providing two families of discrete \emph{non}-uniqueness sets in $\R^n$, where one of them is again contained $U_n$, while the other lies in a union of ellipsoids. Both of them are constructed from lattices corresponding to ideals in totally real number fields $K/\Q$ of degree $n$ and their density grows with the discriminant of $K$ (although their distribution is not uniform, in the sense that they ``avoid" points near the coordinate axes; see Figure \ref{fig1}). We give the precise formulations in the next subsection. Thus, characterizing the \emph{discrete} Fourier uniqueness sets contained in $U_n$ seems to be a subtle question. 

In fact, the motivation for this work was not to find negative results in this direction, but to try to generalize the modular form theoretic approach of Radchenko and
Viazovska to treat not necessarily radial functions on $\R^n$, in a way that is very different from the approach taken by Stoller (who essentially reduces the problem again to the case of radial Schwartz functions). More specifically, we were interested in (possible) interpolation formulas where we replace the set of nodes $A=\sqrt{\Z_{+}}$ by ``square roots'' of certain lattices coming from totally real number fields $K$, specifically, the co-different $\Oo_{K}^{\vee}$ of their ring of integers $\Oo_{K}$. In this set up, it seemed natural to ask whether one could be working with Hilbert modular forms and associated integral transforms, similarly to the proof by Radchenko--Viazovska.

As we will explain more in~\S\ref{sec:algebraic-obstructions} and briefly in \S \ref{subsec:general-lattices-in-intro}, there is an obstruction to the existence of
such interpolation formulas. From the more general point of view taken in \S \ref{sec:algebraic-obstructions}, the obstruction arises because, for $n \geq 2$, 
subgroups of $\PSL_2(\R)^n$ that are commensurable to the Hilbert modular group 
$\PSL_2(\Oo_K)$ are irreducible lattices and can therefore never contain subgroups of finite index with infinite abelianization, by Margulis' normal 
subgroup theorem. On the other hand the presence of 
certain unfavorable relations in the Hilbert modular group can be exploited in an 
explicit manner to obtain the non-uniqueness sets indicated in the abstract.
\subsection{Statement of non-uniqueness results}
We prepare for the formulation of our main non-uniqueness results and at the same time, introduce
some notation that will be used throughout the paper. Let $K$ be a totally real
number field of degree $n \geq 2$ with ring of integers $\Oo_{K}$.   Some of the
objects we will introduce depend on the number field $K$, but we will not always
display this dependence in our notation.

 We denote the $n$ real embeddings by $\sigma_j : K \rightarrow \R$, $1 \leq j \leq
 n$ and assemble them into the map $\sigma : K \rightarrow \R^n$, $\sigma(x) = (\sigma_1(x), \dots, \sigma_n(x))$. We recall that the
 trace of an element $x \in K$ is given by $\Tr(x) = \Tr_{K/
 \Q}(x)=\sum_{j=1}^{n}{\sigma_j(x)}$. For any $\Oo_{K}$-submodule $\a \subset K$ we
 write
\[
 \a^{\vee}  = \left \lbrace  x \in K \,:\,  \, \Tr_{K/\Q}(ax) \in \Z\,\; \mbox{for
 all } a \in \a \right \rbrace
\]
for its dual with respect to the trace paring.  As is well-known, if $\a$ is a
fractional ideal in $K$, then $\sigma(\a) \subset \R^n$ is a lattice  and
$\sigma(\a^{\vee}) =\sigma(\a)^{\vee}$, where on the right we mean the dual lattice
in the usual sense. Moreover, the covolume of $\sigma(\a)$ is given by
\begin{equation}\label{eq:formula-covolume-fractional-ideal}
\covol{(\sigma(\a))} = \Nr(\a) \sqrt{|\disc(K)|},
\end{equation}
where $\Nr(\a) \in \Q_{>0}$ is the ideal norm of $\a$ (the unique extension of the absolute norm on integral ideals to all fractional ideals of $K$) and 
$|\disc(K)| =
\covol(\sigma(\Oo_{K}))^2$ is the discriminant of~$K$.  For any fractional ideal $\a \subset K$ we define
\begin{equation}\label{eq:def-square-root}
\sqrt{\a}:= \sqrt{\sigma(\a)}:= \left \{  (x_1, \dots, x_n) \in \R^n \,:\, (x_1^2, 
\dots, x_n^2) = 
\sigma(\alpha)\, \text{ for some } \alpha \in \a \right \} \subset \R^n
\end{equation}
(which is not to be confused with the radical of an ideal). Recall that the codifferent (or inverse different) of $K$  is the fractional ideal
$\Oo_{K}^{\vee}$ and that the different $\d= \d_{K}$ is defined as $\d=
(\Oo_{K}^{\vee})^{-1}$. We see that the points $\sqrt{\Oo_{K}^{\vee}}$ lie on spheres
$\sqrt{m}S^{n-1}$ with non-negative integers $m$, the traces of the totally
non-negative elements in $\Oo_{K}^{\vee}$ (recall that an element $x \in K$ is said
to be totally non-negative
(resp. totally positive) if $\sigma_j(x) \geq 0$  (resp. $\sigma_j(x) > 0$) for 
all~$j=1,\dots,n$). We return to these points in \S \ref{subsec:number-of-points}.

For $f \in L^1(\R^n)$ we normalize its Fourier transform by $\widehat{f}(\xi) =
\int_{\R^n}{f(x) e^{- 2\pi i \langle x, \xi \rangle}dx}$, $\xi \in \R^n$. We
sometimes also use the notation $\mathcal{F}(f) = \mathcal{F}_{\R^n}(f) =
\widehat{f}$. Finally, we write $\H = \{ z \in \C\,:\, \imag(z) > 0 \}$ for the upper
half plane.
\begin{theorem}\label{thm:main-thm-spheres}
Let $K$ be  a totally real number field of degree $n \geq 2$ as above. Let $V \subset
\Ss(\R^n)$ denote the subspace linearly spanned by all Gaussians $e^{\pi i
z_1 x_1^2} \cdots e^{\pi i z_n x_n^2}$ with $z_j \in \H$, $x_j \in \R$.  Then for
any $\epsilon \in \{ \pm 1 \}$ the subspace of all $f \in V$ satisfying $f(x) = 0$ 
for all $x \in \sqrt{\Oo_{K}^{\vee}}$ and $\widehat{f} = \epsilon f$ is infinite 
dimensional.
\end{theorem}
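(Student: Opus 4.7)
The plan is to adapt the modular-form integral transform approach of Radchenko--Viazovska \cite{RV} to the Hilbert modular setting on $\H^n$. The starting observation is that for $x \in \sqrt{\Oo_K^\vee}$ with $x_j^2 = \sigma_j(\alpha)$ and $\alpha \in \Oo_K^\vee$ totally non-negative,
\[
g_z(x) = e^{\pi i \Tr_{K/\Q}(z\alpha)},
\]
where $\Tr(z\alpha) := \sum_j z_j \sigma_j(\alpha)$ extends the trace pairing to $K \otimes_\Q \R$. Since $\Tr(\mu \alpha) \in \Z$ whenever $\mu \in \Oo_K$ and $\alpha \in \Oo_K^\vee$, the Gaussians $g_z$ and $g_{z+2\sigma(\mu)}$ coincide on all of $\sqrt{\Oo_K^\vee}$. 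Consequently, any finite combination $h = \sum_k c_k\, g_{z_0 + 2\sigma(\mu_k)} \in V$ with $\sum_k c_k = 0$ vanishes on $\sqrt{\Oo_K^\vee}$. Because Gaussians are even and $\widehat{g_z}$ is a scalar multiple of $g_{-1/z} \in V$, the space $V$ is Fourier-invariant, so if one can arrange that $\widehat h$ also vanishes on $\sqrt{\Oo_K^\vee}$, then the symmetrization $f = h + \epsilon\, \widehat h \in V$ is a Fourier eigenfunction with eigenvalue $\epsilon$ vanishing on $\sqrt{\Oo_K^\vee}$.

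The obstacle is that $\widehat h$ is a combination of Gaussians $g_{-1/(z_0+2\sigma(\mu_k))}$ whose parameters generally do \emph{not} all lie in a single residue class modulo $2\sigma(\Oo_K)$, so the translation identity above does not kill $\widehat h$ on $\sqrt{\Oo_K^\vee}$. To bridge this I would pass to an integral transform
\[
h(x) = \int_{\gamma} F(w)\, g_w(x)\, dw_1 \wedge \cdots \wedge dw_n,
\]
where $F \colon \H^n \to \C$ is a Hilbert Jacobi-type form, $2\sigma(\Oo_K)$-periodic and transforming suitably under $w \mapsto -1/w$, and $\gamma$ is an $n$-cycle. The periodicity of $F$, combined with an orthogonality argument on its Fourier expansion, gives vanishing of $h$ on $\sqrt{\Oo_K^\vee}$; the modular transformation law yields $\widehat f = \epsilon f$; and choosing $F$ meromorphic with finitely many poles inside $\gamma$ collapses the integral to a finite Gaussian sum via residues, so that $f \in V$. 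Infinite-dimensionality would then follow by varying $F$ through an infinite family, e.g.\ weakly holomorphic Poincar\'e series with prescribed principal parts at cusps of $\PSL_2(\Oo_K) \backslash \H^n$, or by iterating the construction using the dilation action $x \mapsto \sigma(\epsilon)x$ by totally positive units $\epsilon \in \Oo_K^\times$ (which preserves $\sqrt{\Oo_K^\vee}$ since $\epsilon^2 \alpha \in \Oo_K^\vee$ remains totally non-negative).

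The hard part will be enforcing, simultaneously, the half-integral-weight modular transformation of $F$, the one-sided vanishing condition on its Fourier coefficients indexed by totally non-negative $\alpha \in \Oo_K^\vee$, and the residue-theoretic finiteness that keeps $f$ in $V$. This is precisely where the ``unfavorable relations'' in $\PSL_2(\Oo_K)$ foreshadowed in the introduction should intervene: although Margulis' normal subgroup theorem rules out the kind of characters that make the genus-zero arguments of \cite{RV} work directly for $n \ge 2$, explicit algebraic identities among $S$, the translations $T_{2\sigma(\mu)}$, and the diagonal unit matrices $D_\epsilon$ do exist and should provide the closed linear relations among Gaussians needed to build an infinite-dimensional family.
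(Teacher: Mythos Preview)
Your starting observation and identification of the obstacle are exactly right: differences $g_{z}-g_{z+2\sigma(\mu)}$ vanish on $\sqrt{\Oo_K^\vee}$, and the difficulty is that Fourier transform sends the parameters to $-1/(z+2\sigma(\mu_k))$, which no longer lie in a single coset of $2\sigma(\Oo_K)$.

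However, your proposed resolution via an $n$-fold contour integral of a Hilbert modular form is not a proof, and the paper does not proceed this way. In fact, \S\ref{sec:algebraic-obstructions} of the paper explains that the analogue of the Radchenko--Viazovska machinery is obstructed for $n\ge2$: the group $\Gamma(2\sigma(\Oo_K),2\sigma(\Oo_K))$ is never freely generated by its upper- and lower-triangular subgroups, and by Margulis' theorem the abelianization of any finite-index subgroup of $\PSL_2(\Oo_K)$ is finite. So the modular forms $F$ you hope to construct, with freely prescribable principal parts and the right half-integral-weight transformation under a free group, are not available. Your proposal acknowledges this difficulty but does not overcome it.

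What the paper actually does is purely finite and algebraic. One writes down eight explicit matrices $\gamma_0,\dots,\gamma_7\in\PSL_2(\Oo_K)$ (built from units in $1+4\Oo_K$ and $1+3\Oo_K$; these exist by Dirichlet) with the property that $S\gamma_r=T^{2\beta_r}\gamma_{r-1}$ for some $\beta_r\in\Oo_K$, cyclically in $r\in\Z/8\Z$. One then sets $h_z=\sum_{r}\lambda_r(z)\,g(\gamma_r z)$ with scalars $\lambda_r(z)$ chosen so that the Fourier transform of the $r$-th summand equals $-\lambda_{r-1}(z)g(S\gamma_r z)$. Then
\[
f_z=h_z+\epsilon\widehat{h_z}=\sum_{r}\lambda_{r-1}(z)\bigl(g(\gamma_{r-1}z)-g(S\gamma_r z)\bigr)=\sum_{r}\lambda_{r-1}(z)\bigl(g(\gamma_{r-1}z)-g(T^{2\beta_r}\gamma_{r-1}z)\bigr),
\]
and each bracket vanishes on $\sqrt{\Oo_K^\vee}$ by your own observation. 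The existence of the $\lambda_r(z)$ is a consistency condition $\prod_{r}\mu(\gamma_r z)=1$ on automorphy factors; the paper verifies it by an explicit sign computation (Lemma~\ref{lem:argument-formula}), which is where one must choose the units to be totally positive. Infinite-dimensionality then follows by varying the generic base point $z$, not by varying a modular form.

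Your last paragraph does gesture toward this mechanism (``explicit algebraic identities among $S$, the translations $T_{2\sigma(\mu)}$, and the diagonal unit matrices''), but the concrete cycle of eight matrices and the verification that the automorphy-factor product closes up are the entire content of the argument, and they are absent from your proposal.
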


\begin{remark} \label{rem:finiteX}
Since the space of Fourier eigenfunctions vanishing on $\sqrt{\Oo_{K}^{\vee}}$ is 
infinite-dimensional we can obtain nontrivial 
functions vanishing in addition on an arbitrary finite subset of $\R^n$, by a simple linear algebra argument. A similar remark applies to Theorem \ref{thm:main-thm-ellipsoids} below.
\end{remark}

Besides points on  spheres $\sqrt{m}S^{n-1}$, our methods also allow us to treat
other sets related to the different, which in general lie on ellipsoids. To formulate
it, we appeal to a Theorem of Hecke \cite[\S 63, Satz 176]{H2}, asserting that the
different $\d$ defines a square in the ideal class group of $K$. This means that we
can choose a fractional ideal $\a$ and a scalar $c \in K^{\times}$ such that
\begin{equation}\label{eq:inverse-different-as-square}
c \a^2 = \d^{-1}.
\end{equation}
Let us then define the set
\begin{equation}\label{eq:def-set-E_F}
E(c, \a):= \left \{ x \in \R^n \,:\, \exists \alpha \in \a^2 \text{ such that }  \sigma(\alpha) = (x_1^2/ |\sigma_1(c)|, \dots, x_n^2/ |\sigma_n(c)|) \right \}.
\end{equation}
Note that this is a discrete subset of a union of ellipsoids in $\R^n$.
\begin{theorem}\label{thm:main-thm-ellipsoids}
Let $K$, $n$, and $V$ be as in Theorem \ref{thm:main-thm-spheres} and let $c$ and $\a$
be such that \eqref{eq:inverse-different-as-square} holds. Then, for every $\epsilon
\in \{ \pm 1 \}$ the subspace of all $f \in V$ satisfying $f(x)=0$ for all $x \in 
E(c, \a)$ and $\widehat{f} = \epsilon f$ is infinite dimensional.
\end{theorem}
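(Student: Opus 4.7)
The plan is to mirror the construction used for Theorem~\ref{thm:main-thm-spheres}, with adjustments for the scalar $c$ and the ideal $\a^2$. The first step is to describe how Gaussians in $V$ evaluate on $E(c, \a)$: if $x \in E(c, \a)$ corresponds to $\alpha \in \a^2$ totally non-negative via $\sigma_j(\alpha) = x_j^2 / |\sigma_j(c)|$, then, setting $\beta := c\alpha \in c\a^2 = \Oo_K^\vee$ and $\epsilon_j := \sgn \sigma_j(c) \in \{\pm 1\}$, one obtains
\[
\prod_{j=1}^{n} e^{\pi i z_j x_j^2} \;=\; e^{\pi i \sum_j \epsilon_j z_j \sigma_j(\beta)}.
\]
As $x$ ranges over $E(c, \a)$, the element $\beta$ ranges over all elements of $\Oo_K^\vee$ whose sign pattern agrees with $\epsilon := (\epsilon_1, \ldots, \epsilon_n)$ wherever $\sigma_j(\beta) \neq 0$. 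This parallels Theorem~\ref{thm:main-thm-spheres}, where evaluation on $\sqrt{\Oo_K^\vee}$ picks out the totally non-negative elements of $\Oo_K^\vee$.

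Next, I would construct the required Fourier eigenfunctions as integral transforms $f(x) = \int_\gamma \phi(z) \prod_j e^{\pi i z_j x_j^2}\, dz_1 \cdots dz_n$, in analogy with Theorem~\ref{thm:main-thm-spheres}, using a weakly holomorphic Hilbert-modular-form-like object $\phi$ associated to the data $(c, \a)$. The Fourier-eigenvalue property $\widehat{f} = \pm f$ would follow from a transformation law of $\phi$ under $z_j \mapsto -1/z_j$ combined with the standard Gaussian Fourier transform, and the vanishing $f|_{E(c,\a)} \equiv 0$ would reduce to the vanishing of the Fourier coefficients of $\phi$ at those $\beta \in \Oo_K^\vee$ of signature $\epsilon$. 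The infinite-dimensionality would then come from the infinite-dimensional space of weakly holomorphic Hilbert modular forms with poles allowed only at cusps other than $\infty$, exactly as for Theorem~\ref{thm:main-thm-spheres}.

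The main obstacle, compared to Theorem~\ref{thm:main-thm-spheres}, is the sign vector $\epsilon$: when $c$ is not totally positive, the evaluation on $E(c,\a)$ picks out Fourier coefficients along a non-standard orthant of $\sigma(\Oo_K^\vee)$, and handling this forces one to work with Hilbert modular forms on the mixed half-space $\H^{n_+} \times \bar{\H}^{n_-}$ (where $n_\pm = \#\{j : \epsilon_j = \pm 1\}$), or equivalently with forms for a signature-twisted action of $\SL_2(\Oo_K)$. Setting up the correct multiplier system so that both eigenvalues $\pm 1$ are attained by appropriate choices of $\phi$, and keeping the integral transform compatible with the space $V$ of genuine Gaussians on $\R^n$, is the technical heart of the argument. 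Once this bookkeeping is in place, infinite-dimensionality propagates from the space of weakly holomorphic forms with the required vanishing to the space of Fourier eigenfunctions vanishing on $E(c, \a)$, exactly as in Theorem~\ref{thm:main-thm-spheres}.
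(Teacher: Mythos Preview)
Your proposal has a genuine gap: the integral-transform construction you sketch does \emph{not} produce elements of the space $V$ specified in the theorem. Recall that $V$ is the \emph{linear span} of Gaussians $e^{\pi i \sum_j z_j x_j^2}$, i.e., the space of \emph{finite} linear combinations of such Gaussians. An integral $f(x)=\int_\gamma \phi(z)\prod_j e^{\pi i z_j x_j^2}\,dz$ over a contour with a weakly holomorphic Hilbert modular form $\phi$ as integrand will, except in degenerate cases, not lie in $V$. So even if everything you outline about the modular transformation law, the sign-twisted half-spaces, and the infinite-dimensional supply of weakly holomorphic forms could be made to work, the output would land in the wrong space and would not prove the theorem as stated.

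There is also a misconception about what the paper does for Theorem~\ref{thm:main-thm-spheres}: neither theorem is proved via integral transforms of modular forms. The paper's argument for Theorem~\ref{thm:main-thm-ellipsoids} is purely algebraic and finitary. It introduces the sign vector $\delta_j=\sgn(\sigma_j(c))$ and the twisted Gaussians $g_\delta(z,x)=e^{\pi i\sum_j \delta_j z_j x_j^2}$ on the mixed half-space $\H_\delta^n$ (your instinct about the signs is correct), then builds a slash action of the theta subgroup $\Gamma_\vartheta=\langle S,\,T^{2\beta},\,M(\varepsilon)\rangle\le\PSL_2(\Oo_K)$ via an automorphic factor coming from a theta series attached to $(c,\a)$. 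The key reduction is to the group algebra $\Rr=\C[\Gamma_\vartheta]$: for $A\in\Rr$, the function $(g_\delta|A)(z)$ is a finite linear combination of Gaussians, is a Fourier $\epsilon$-eigenfunction whenever $A\in(1+\epsilon S)\Rr$, and vanishes on $E(c,\a)$ whenever $A$ lies in the right ideal $\Ii=\sum_\beta(1-T^{2\beta})\Rr$. The substance of the proof is then Proposition~\ref{prop:nonzero-intersection-of-ideals}, which exhibits explicit nonzero elements of $(1\pm S)\Rr\cap\Ii$ built from eight matrices $\gamma_0,\dots,\gamma_7$ whose entries are manufactured from nontrivial units in $1+4\Oo_K$ and $1+3\Oo_K$ (supplied by Dirichlet's unit theorem). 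Infinite-dimensionality then follows because any nonzero right ideal in the group algebra of an infinite group is infinite-dimensional, and injectivity of $A\mapsto (g_\delta|A)(z)$ at a generic point $z$ transfers this to functions.
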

The functions we produce for Theorems~\ref{thm:main-thm-spheres} and \ref{thm:main-thm-ellipsoids}  are quite explicit. The prototypical example is a 
linear combination of 16 Gaussians whose parameters $z  \in \H^n$ are of the form $z 
= \gamma \cdot \tau$ for a generic point $\tau \in \H^n$ and some special elements  $\gamma \in \PSL_2(\Oo_K)$, eight of which  are written down explicitly in the proof 
of Proposition~\ref{prop:nonzero-intersection-of-ideals}. The entries of the matrices can be computed if one knows some non-trivial units of~$\Oo_{K}$ in the congruence classes $1+4\Oo_K$ and $1+3\Oo_K$.

In the remaining parts of this introduction we give further explanations for  Theorems~\ref{thm:main-thm-spheres} and~\ref{thm:main-thm-ellipsoids} and add a few remarks. In \S\ref{subsec:general-lattices-in-intro}, we describe the other two results indicated in the abstract. 

\subsection{On the number of points in $\sqrt{\d^{-1}} \cap 
\sqrt{m}S^{n-1}$}\label{subsec:number-of-points} 
The cardinality of $\sqrt{\d^{-1}} \cap 
\sqrt{m}S^{n-1}$ is $2^n$ times the number of totally
non-negative elements in $\d^{-1}$ of trace $m \geq 0$. By
choosing a $\Z$-basis for $\Oo_{K}$ containing $1$ and considering the element
$\alpha_1 \in K$ such that the $\Q$-linear functional $y \mapsto \Tr(\alpha_1 y )$
takes the value $1$ on $y=1$ and zero on all other elements of the basis, we see
that $\Tr(\alpha_1) = 1$ and $\alpha_1 \in \d^{-1}$.
It follows that for all $m \in \Z$, we have
    \[\{ \alpha \in \d^{-1} \,:\, \Tr(\alpha) = m \} = m \alpha_1 + (\d^{-1})_0,  
    \qquad (\d^{-1})_0:=  \{ \alpha \in \d^{-1} \,:\, \Tr(\alpha) = 0 \}.\]
Thus, for $m \geq 0$, the subset of $\R^n$ whose cardinality we are interested in, can be written as
\[
\left( m \sigma(\alpha_1) + \sigma( (\d^{-1})_0) \right) \cap [0,\infty)^n
\]
whose cardinality equals that of
\[
\sigma( (\d^{-1})_0) \cap m( [0,\infty)^n - \sigma(\alpha_1)) 
\]
which is the set of lattice points of $\sigma((\d^{-1})_0) \subset \{ x  \in \R^n
\,:\, \sum_{i=1}^{n}{x_i} = 0 \}$ in a homogeneously expanding $(n-1)$-dimensional
region, allowing for an application of a standard estimate of the number of such
points, as $m \rightarrow \infty$. The necessary volume computations are done (for
any fractional ideal, in fact) in the work of Ash and Friedberg, see \cite[\S 5,
Prop. 5.1 and \S 6]{AS}. From the cited parts of their work, we deduce that
\begin{equation}\label{eq:asymptotic-formula-number-of-points}
\left| \sqrt{\d^{-1}} \cap \sqrt{m}S^{n-1} \right| = 2^n \frac{\sqrt{|\disc(K)|}}{(n-1)!}
m^{n-1} +O(m^{n-2}), \quad m \rightarrow \infty,
\end{equation}
where the implied constant may depend on $K$ and $n$. We
point out the following features of this asymptotic formula:
\begin{itemize}
\item The surface area of $\sqrt{m}S^{n-1}$ grows like $m^{\frac{n-1}{2}}$, so the points are more densely spaced than a constant number of points per 
unit surface area
on $S^{n-1}$.
\item We may increase the density of points by a constant factor, by taking the
discriminant of $K$ arbitrarily large, while keeping the degree $n$ fixed.
\item For small $m$, there may be no points in $\sqrt{\d^{-1}} \cap \sqrt{m}S^{n-1}$, 
but note that we can add \emph{any} finite set of points on these small spheres, by  
Remark~\ref{rem:finiteX}. 
\end{itemize}

\begin{figure}
	\includegraphics[width=0.45\textwidth]{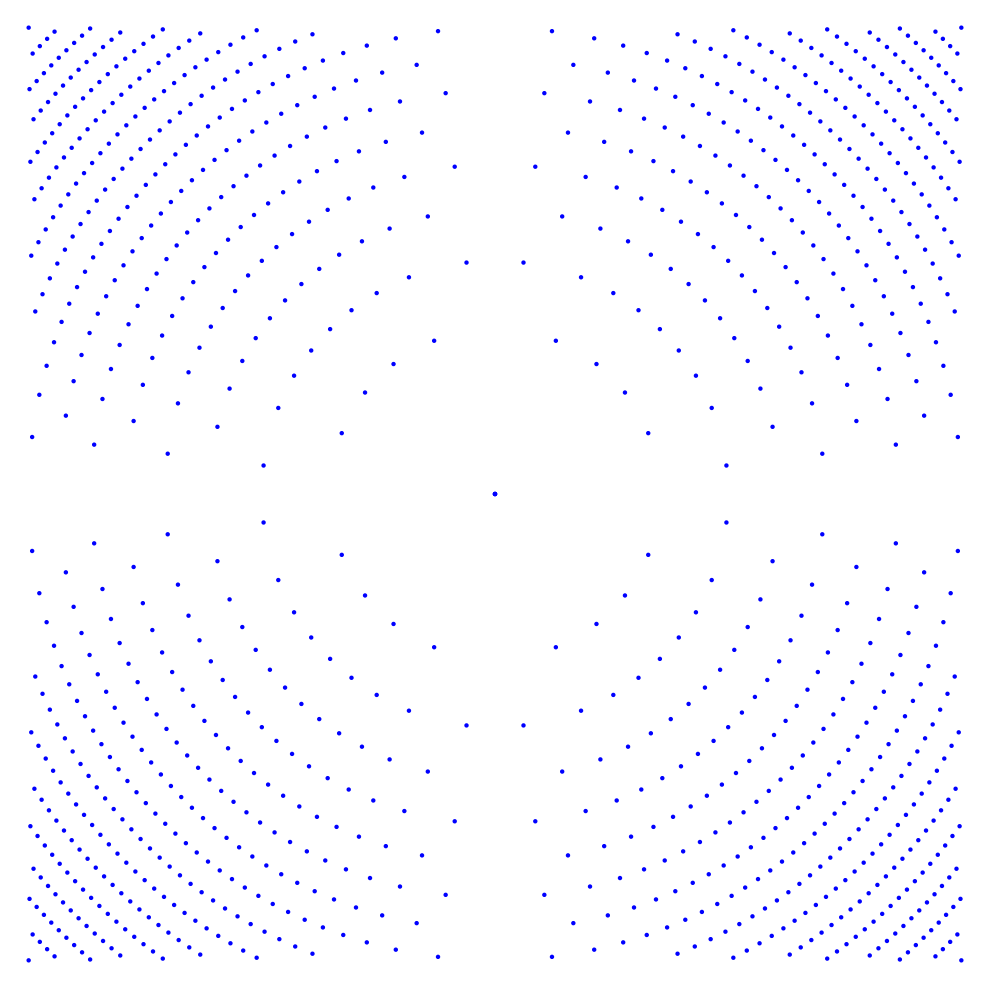}
   	\includegraphics[width=0.45\textwidth]{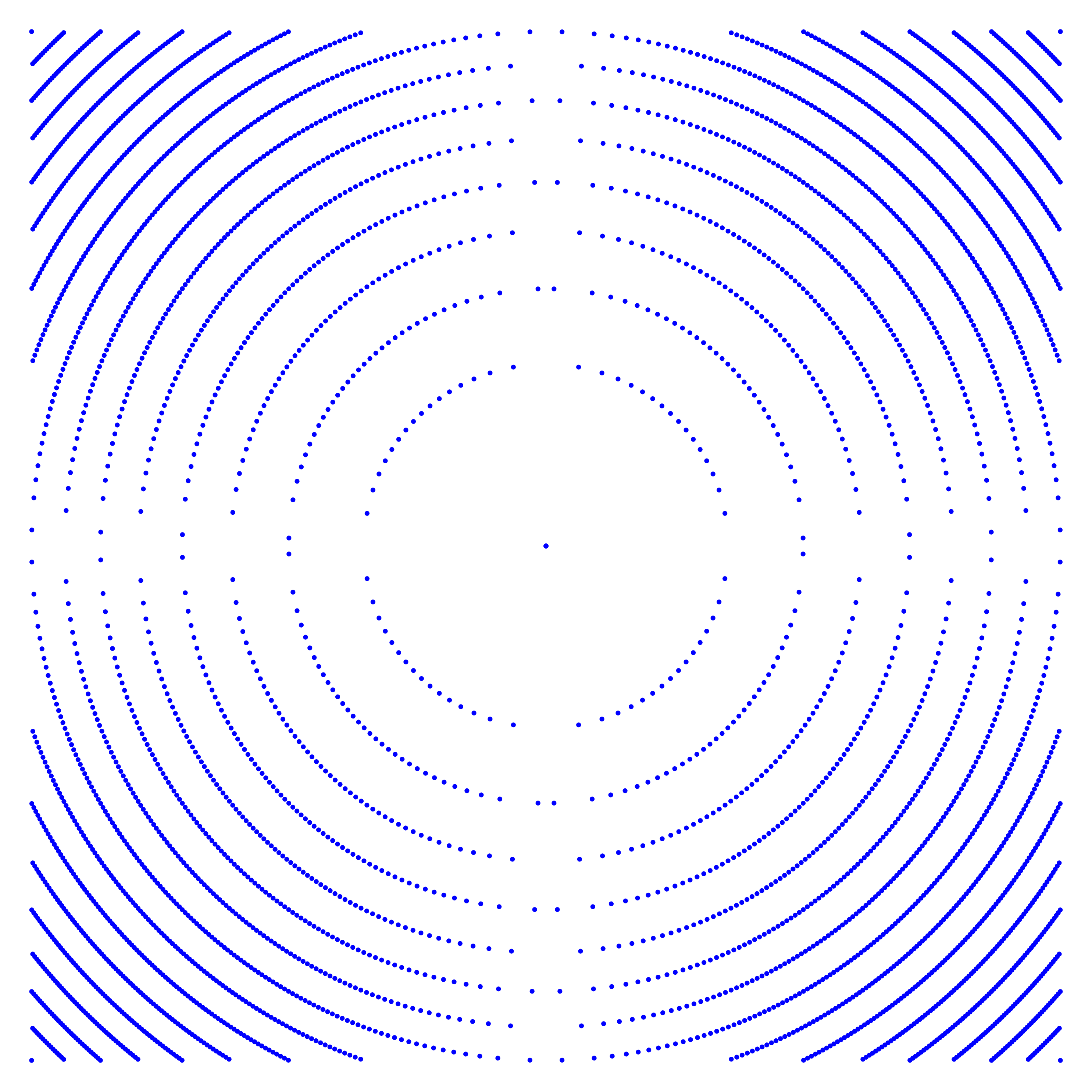}
	\caption{Non-uniqueness sets constructed from $\Q(\sqrt{17})$ 
    and $\Q(\sqrt{257})$}
    \label{fig1}
\end{figure}


\subsection{The relation between Theorem \ref{thm:main-thm-spheres} and Theorem \ref{thm:main-thm-ellipsoids}}\label{subsec:co-different-condition}
If the number $c$ in \eqref{eq:inverse-different-as-square}
can be taken totally positive, then $E(c, \a) = \sqrt{\Oo_K^{\vee}}$ and both theorems give the same result. Since we are free
to replace $c$ by $\varepsilon c$ for any unit $\varepsilon \in \Oo_{K}^{\times}$,
we can take $c$ totally positive, provided $K$ has units 
$\varepsilon$ of all possible sign patterns
$(\sigma_j(\varepsilon)/|\sigma_j(\varepsilon)|)_{1 \leq j \leq n} \in \{ \pm 1
\}^n$. In the real quadratic case, the latter is equivalent to the fundamental unit
having norm $-1$.  Such conditions are studied more generally in the literature, via the notion of signature rank.
 
In fact, whenever $K/ \Q$ is Galois and $n$ is odd, then $c = 1$ is admissible. In other words, the different is then exactly equal to the square of another ideal. This follows from Hilbert's formula, see \cite[Ex. 5.45, p. 253]{Mollin}.

Generally, recall that a large class of number fields which allows for an easy determination of admissible
$c$ and $\a$ in \eqref{eq:inverse-different-as-square} is given by monogenic number
fields. For example, for any irreducible monic polynoimal $P \in \Z[X]$  with
square-free discriminant and no complex roots, we can take $K= \Q(\alpha) \subset \R$ 
for some root~$\alpha$ of~$P$. Then it is well-known that $\Oo_{K} = \Z[\alpha]$ and 
$\Oo_{K}^{\vee} =
\frac{1}{P'(\alpha)} \Oo_{K}$, so that $(c, \a) = (1/P'(\alpha), \Oo_{K})$ is
admissible in \eqref{eq:inverse-different-as-square}. 

We note further that, if there is a constant $\theta>0$ so that $|\sigma_j(c)|=\theta$ for all
$j=1,\dots,n$, then  the set $E(c, \a)$ is contained in the union of spheres 
$\sqrt{\theta m}S^{n-1}$ (rather than in a union ellipsoids). This happens for some 
real-quadratic fields, see \S \ref{subsec:illustration-real-qaudratic}.

\subsection{Real quadratic fields}\label{subsec:illustration-real-qaudratic}
To illustrate the theorems in the case $n = 2$, consider a real quadratic field $K
=\Q(\sqrt{D})$ as a subfield of $\R$ of discriminant $D, \sqrt{D}>0$  and for $x \in K$ write
$\sigma_1(x) = x$ and $\sigma_2(x) =: x'$ so that $\sqrt{D}' = - \sqrt{D}$. Define $\omega :=(D+\sqrt{D})/2$ and $c := 1/ \sqrt{D}$. Then $\Oo_{K} = \Z + \Z \omega$ and $\Oo_{K}^{\vee}= c\Oo_{K} = c\Oo_{K}^2$ 
(square of a fractional ideal). Thus, every element of $\Oo_{K}^{\vee}$ may be written as $\alpha =
c(\ell + m \omega)$ for $\ell, m \in \Z$ and has $\Tr(\alpha) = \ell \Tr(c) + m
\Tr(\omega c) = m$. The element $\alpha$ is totally non-negative if and only if $m
\geq 0$ and $-m \omega \leq \ell \leq -m \omega'$. This shows that
\[
\left|\sqrt{\Oo_{K}^{\vee}} \cap \sqrt{m}S^1 \right| =2 \left|\Z \cap [-m \omega ,-m
 \omega']\right| \sim 2m \sqrt{D}, \quad m \rightarrow 
 \infty,
\]
which exemplifies \eqref{eq:asymptotic-formula-number-of-points} and Theorem \ref{thm:main-thm-spheres} in the simplest case.

Let us now illustrate Theorem \ref{thm:main-thm-ellipsoids} with $\a =
\Oo_{K}$ and the above value of $c$, which is \emph{not} totally positive and
satisfies $|\sigma_1(c)| = |\sigma_2(c)| = \frac{1}{ \sqrt{D}}$. We assume that $4|D$ and set $d:= D/4 \equiv 2,3 \pmod{4}$, so that $\Oo_K = \Z + \Z\sqrt{d}$. Then $E(c, \a)$ is the set of $x = (x_1, x_2) \in \R^2$ such that
\[
(x_1^2, x_2^2) =  \frac{1}{2\sqrt{d}} \left( a + b \sqrt{d}, a-b\sqrt{d}  \right)
\]
for some $a,b \in \Z$ satisfying $|b\sqrt{d}| \leq a$ and $x_1^2 + x_2^2 = 
\frac{a}{\sqrt{d}}$. In other words, $E(c, \a)$ is a discrete subset of a union of 
circles of radii $\sqrt{a/\sqrt{d}}$, for all integers $a \ge 0$ with about 
$a/\sqrt{d}$ 
many points on each. If $D \equiv 1 \pmod{ 4}$, then 
$E(c, \a)$ is a discrete subset of the union of all circles of radii 
$\sqrt{t/\sqrt{D}}$ for all integers $t \geq 0$ with about $2t/\sqrt{D}$ points on 
each. 
\subsection{A minor generalization of Theorem \ref{thm:main-thm-spheres} }\label{subsec:multi-radial-case}
The space of Gaussians defined in Theorem \ref{thm:main-thm-spheres} is a subspace of
the space of Schwartz functions on $\R^n$ that are even in each variable (and it
turns out to be dense in that space, see Proposition \ref{prop:density-of-Gaussians}). More generally, Theorem \ref{thm:main-thm-spheres} holds and will be proved in the following  setting.

Let $d,n \geq 1$ be integers and consider a partition $d = d_1 + \dots + d_n$ of $d$. 
We view the Euclidean space $\R^{d}$ as the product space $\R^{d_1} \times \dots 
\times \R^{d_n}$ and elements $x \in \R^d$ as $n$-tuples $x =
(x_1, \dots, x_n)$ where $x_j \in \R^{d_j}$. The group $\Or(d_1)
\times \dots \times \Or(d_n)$ embeds block-diagonally into the orthogonal 
group~$\Or(d)$. Denote by $\Ss(\R^d)^{\Or(d_1) \times \dots \times
\Or(d_n)}$ the space of Schwartz functions on $\R^d$ that are radial in each of 
the~$n$ variables $x_j \in \R^{d_j}$. Such functions can be identified with functions 
on $[0,+\infty)^n$ and we freely use this identification to evaluate them on 
$n$-tuples of non-negative real numbers. An $\Or(d_1) \times \dots \times
\Or(d_n)$-invariant function on $\R^d$ will be said to \emph{vanish on }$\sqrt{\Oo_K^{\vee}}$, if $f(x) = 0$ for all $x \in \R^d$ such that $(|x_1|^2, \dots, |x_n|^2) =  (\sigma_1(\alpha), \dots, \sigma_n(\alpha))$ for some $\alpha \in \Oo_K^{\vee}$.

Besides the case where all $d_j$ are equal,  our proof of Theorem 
\ref{thm:main-thm-ellipsoids} does not seem to easily generalize to the more general 
setting that we have just described, for technical reasons having to do with the 
existence of automorphic factors, see \S \ref{sec:pf-thm-2}.

\subsection{General lattices and a radial uniqueness result}\label{subsec:general-lattices-in-intro}
As already mentioned above, in \S \ref{sec:algebraic-obstructions} we will consider 
general lattices $ \Lambda \subset \R^n$ and their square roots  $\sqrt{\Lambda}:=  
\{(x_1, \dots, x_n) \in \R^n \,:\, (x_1^2, \dots, x_n^2) \in \Lambda \}$. In \S 
\ref{sec:set-up-with-lattices}, we will explain (mainly for motivational purposes) a 
natural equivalent formulation of a Fourier interpolation formula using the pair 
of sets $(\sqrt{\Lambda_1}, \sqrt{\Lambda_2})$ for lattices $\Lambda_1,\Lambda_2 
\subset 
\R^n$ in terms of generating series, viewed as functions on $\H^n$ and describe their 
modular transformation properties in terms of a certain subgroup $\Gamma(L_1, 
L_2) \leq \PSL_2(\R)^n$, where $L_i = 2 \Lambda_i^{\vee}$.  We will prove in Proposition 
\ref{prop:non-existence-of-good-lattices} that, for $n \geq 2$,  there is no pair of 
lattices $(L_1, L_2)$ such that the group $\Gamma(L_1, L_2)$ 
is discrete and at the same time the free inner product of two subgroups of upper- 
and lower triangular elements isomorphic to $L_1$ and $L_2$ respectively. We prove 
that the latter property of $\Gamma(L_1, L_2)$ is a necessary condition for the 
existence of such an interpolation formula (Proposition \ref{prop:necessity-of-F})  
and we argue why discreteness might be necessary as well.

From this more general point of view we return in  \S \ref{sec:interpolation-result} to the case $n  = 1$ and prove: 

\begin{theorem-with-no-label}[$=$ Theorem \ref{thm:interpolation-theorem-lambda-bigger-2} $+$ Corollary \ref{cor:non-radial-uniqueness-corollary-lambda-bigger-two} in \S \ref{sec:interpolation-result}]
For all $d \geq 5$ and all positive reals $\alpha, \beta$ such that $\alpha \beta  \geq 1$ the pair \begin{equation}\label{eq:dense-uniqueness-set-intro}
\left(\cup_{m \geq 1}{\sqrt{m/ \alpha}S^{d-1}}, \cup_{m \geq 1}{\sqrt{m/ \beta}S^{d-1}} \right)
\end{equation}
is a Fourier uniqueness pair for $\mathcal{S}(\R^d)$ and there exists a linear interpolation formula which proves this. Furthermore, if $\alpha \beta > 1$, then \eqref{eq:dense-uniqueness-set-intro} remains a uniqueness pair after removing any finite number of spheres from both sides.
\end{theorem-with-no-label}
The radial interpolation result (Theorem \ref{thm:interpolation-theorem-lambda-bigger-2}) will be proved via a series construction generalizing the one used in \cite{S} from $\Gamma(2)$ to the subgroup of $\PSL_2(\R)$ generated by \[
\begin{pmatrix}
1 & 2 \alpha \\ 0 & 1 
\end{pmatrix}, \qquad \begin{pmatrix}
1 & 0 \\ 2 \beta & 1 
\end{pmatrix}  .
\]
For $\alpha \beta \geq 1$, it is conjugate in $\PSL_2(\R)$  to a normal subgroup of 
index two in a Hecke group  and is isomorphic to 
$\Gamma(2)$. For $\alpha \beta >1$ these groups have infinite covolume and infinite dimensional spaces of modular forms. The latter fact was proved by Hecke \cite[\S 3]{H1} and his construction of linearly independent modular forms allows us to remove finitely many spheres from \eqref{eq:dense-uniqueness-set-intro}. 
\subsection{Some notation}\label{subsec:notation}
Besides the notation introduced above, we will also use the following general
notation throughout the paper. For $z \in \H$, the number $z/i$ belongs to the right
half plane $\mathcal{H}:= \{w \in  \C \,:\, \real(w) > 0 \}$ and on it, we always use
the branch of the logarithm $w \mapsto \log(w)$ that takes real values on $(0, +
\infty)$. For any $z \in \H$ and $k \in \C$, we thus define $(z/i)^{k} =
\exp(k \log(z/i))$. For $x \in \R$ we define $\sgn(x) \in \{-1, 0, 1 \}$ as $\sgn(x)
= x/|x|$ if $x \neq 0$ and $\sgn(0):=0$.

In the setting of \S \ref{subsec:multi-radial-case} we will work with complex 
Gaussians, parameterized by points $z = (z_1, \dots, z_n) \in \H^n$ and 
defined as
\begin{equation}\label{eq:def-gaussian}
g(z, x) = e^{\pi i  z_1 |x_1|^2} \cdots e^{\pi i  z_n |x_n|^2}, \quad x_j \in \R^{d_j}.
\end{equation}
We sometimes also view $g$ as a map $g: \H^n \rightarrow \Ss(\R^d)^{\Or(d_1) \times
\cdots \times \Or(d_n)}$, so that from this point of view $g(z)(x) = g(z,x)$. 
Moreover, we have, for all $z \in \H^n$,
\begin{equation}\label{eq:fourier-transform-gaussian}
\widehat{g(z)} = (z_1/i)^{-d_1/2} \cdots (z_n/i)^{-d_n/2} g(-1/z), \quad -1/z := 
(-1/z_1, \dots, -1/z_n).
\end{equation}
More specific notation will be introduced in the body of the paper. 

\subsection*{Acknowledgments}
The second author would like to thank Maryna Viazovska for sharing ideas and 
techniques during his previous work~\cite{S}, which were also useful in \S \ref{sec:interpolation-result}.

\section{Proof of Theorem \ref{thm:main-thm-ellipsoids}}
\label{sec:pf-thm-2}

The goal of this section is to prove Theorem~\ref{thm:main-thm-ellipsoids}. 
In \S\ref{sec:hilbert-modular-group-and-subgroups} we introduce some notation and 
define a ``theta-subgroup" $\Gamma_{\vartheta}$ of the Hilbert 
modular group~$\PSL_2(\Oo_K)$.
In \S\ref{sec:automorphic-factor} we define a slash action of the group algebra 
$\C[\Gamma_{\vartheta}]$ on complex-valued functions on a product of 
upper and lower half planes, via theta functions. The examples of non-trivial 
functions satisfying the vanishing conditions of 
Theorem~\ref{thm:main-thm-ellipsoids} will be given as Gaussians slashed 
with suitable elements in $\C[\Gamma_{\vartheta}]$. Lemmas  \ref{lem:action-fourier} 
and \ref{lem:difference-of-translates-of-gaussians}  will show that ``suitable'' 
means to belong to the intersection of two right ideals in $\C[\Gamma_{\vartheta}]$. 
In \S 
\ref{subsec:lemmas}, we will show that this intersection is infinite dimensional and 
conclude the   proof of Theorem \ref{thm:main-thm-ellipsoids} in \S 
\ref{sec:conclusions-pf-thm-2}. 

\subsection{Hilbert modular groups and subgroups}\label{sec:hilbert-modular-group-and-subgroups}
As in \S \ref{sec:introduction}, we consider a totally
real number field~$K$ of degree $n = [K: \Q] \geq 2$. As in
\eqref{eq:inverse-different-as-square}, we choose and fix $c \in K^{\times}$ and a
fractional ideal $\a \subset K$ so that $\d^{-1} = c \a^2$, where $\d$ is the different of $K$. Depending upon these
quantities we define signs $\delta_j := \sgn(\sigma_j(c))$, a vector of signs $\delta
= (\delta_j)_{1 \leq j \leq n} \in \{ \pm 1\}^n$ and
\[
\H_{\delta}^n:= \{z = (z_1, \dots, z_n) \in \C^n \,:\, \imag(\delta_j z_j) >0 \text{ for all } j \in \{1, \dots, n \} \}.
\]
Instead of the ones in~\eqref{eq:def-gaussian}, we will work, for all of \S\ref{sec:pf-thm-2}, 
with Gaussians
\begin{equation}\label{eq:signed-gaussian}
g_{\delta}(z) \in  \Ss(\R^n) \quad \text{ defined by } \quad g_{\delta}(z)(x) :=
g_{\delta}(z,x) := e^{ \pi i \sum_{j=1}^{n}{\delta_j z_j x_j^2}}, \quad  z \in
\H_{\delta}^n,\, x \in \R^n.
\end{equation}
We consider the Hilbert modular group $\Gamma := \PSL_2(\Oo_{K})$ and denote
    \[
    S = \begin{pmatrix}
    0 & -1\\
    1 & 0
    \end{pmatrix}, \qquad T^{\beta} = \begin{pmatrix}
    1 & \beta\\
    0 & 1
    \end{pmatrix}, \quad \beta \in \Oo_{K}, \qquad M(\varepsilon) = \begin{pmatrix}
    \varepsilon & 0\\
    0 & \varepsilon^{-1}
    \end{pmatrix}, \quad  \varepsilon \in \Oo_{K}^{\times}\,,
    \]
viewing these as elements of $\Gamma$. Next, we embed $\Gamma$ into
$\PSL_2(\R)^{n}$ via the real embeddings $\sigma_j$. The latter group and hence
$\Gamma$ itself, acts on $\H_{\delta}^n$ via fractional linear transformations. This
action is faithful and we sometimes identify a group element with the associated
automorphism of $\H_{\delta}^n$, in particular when writing compositions of maps. Define
\[
\Gamma_{\vartheta} := \left \langle \{ S \} \cup \{ T^{2\beta}\}_{\beta \in \Oo_{K}}
\cup \{ M(\varepsilon) \}_{\varepsilon \in \Oo_{K}^{\times}} \right \rangle \leq \Gamma.
\]
\begin{remark}\label{rmk:theta-group}
Let $\tilde{\Gamma}_{\vartheta}$ denote the image in $\Gamma$ of the
group of matrices in $\SL_2(\Oo_{K})$ which  reduce to $\begin{psmallmatrix} \ast &
0\\ 0 & \ast \end{psmallmatrix}$ or $ \begin{psmallmatrix} 0 & \ast\\ \ast & 0
\end{psmallmatrix}$ in $\SL_2(\Oo_{K} /2 \Oo_{K})$. By definition,
$\Gamma_{\vartheta}
\leq \tilde{\Gamma}_{\vartheta}$ and equality is known to hold (at least) in the case
$K = \Q(\sqrt{5})$ (see \cite[\S1]{M}). Even though it would be convenient, we do 
not need to know equality in general and only mention it to provide context (but we will also refer to this group in the proof of Proposition \ref{prop:nonzero-intersection-of-ideals}).  
\end{remark}
\subsection{Automorphic factors and slash action}\label{sec:automorphic-factor}
Our task  here is to define a suitable automorphic factor and a corresponding slash 
action of $\Gamma_{\vartheta}$ on spaces of functions on $\H_{\delta}^{n}$ so that 
the action of~$S$ matches with the Fourier transform acting on Gaussians and so 
that $T^{2\beta}$ simply acts as translation by~$2\sigma(\beta)$. 
We will use theta functions attached to fractional ideals in $K$. 
Essentially the same functions were already studied  by Hecke
\cite[\S 56]{H2}.

We define the function $\vartheta : \H_{\delta}^n
\rightarrow \C$ by the absolutely and normally convergent series
\[
\vartheta(z) := \vartheta(z_1, \dots, z_n) := \sum_{\alpha \in \mathfrak{a}}{e^{\pi i \sum_{j=1}^{n}{z_j \sigma_j(c \alpha^2)}}},
\]
where we recall that $\d^{-1} = c \a^2$. We next determine the transformation behavior of $\vartheta$ under the generators of
$\Gamma_{\vartheta}$. These are certainly not new, but we include their proofs to keep the presentation self-contained. First, since $\a$ is an $\Oo_{K}$-submodule of $K$, we have, for every
$\varepsilon \in \Oo_{K}^{\times}$, and every $z \in \H_{\delta}^n$
\[
\vartheta(M(\varepsilon)z) =\vartheta( \sigma_1(\varepsilon)^2 z_1, \dots, \sigma_n(\varepsilon)^2 z_n) = \vartheta(z).
\]
Next, $\vartheta(T^{2\beta} z) = \vartheta(z)$ for all $z \in \H_{\delta}^n$ and
all $\beta \in \Oo_{K}$, since for all $\alpha \in \mathfrak{a}$,
\[
\sum_{j=1}^{n}{(z_j +2 \sigma_j(\beta)) \sigma_j(c \alpha^2)} = \sum_{j=1}^{n}{z_j
\sigma_j(c \alpha^2)} + 2 \Tr_{K/ \Q}(\beta c \alpha^2)
\]
and the above trace is an integer. To study the effect of $\vartheta$ under $S$ note that, by definition, $\vartheta(z)$ is the sum over the lattice $\sigma(\a)$ of the Schwartz function $f_z = g_{\delta}(|\sigma_1(c)|z_1, \dots, |\sigma_n(c)|z_n)$ whose Fourier transform is
\[
\widehat{f_z}(\xi) = \prod_{j=1}^{n}{(\delta_j |\sigma_j(c)| z_j/ i)^{-1/2}e^{\pi i
\delta_j (-1/ (|\sigma_j(c)| z_j)) \xi_j^2}}  = |\Nr_{K/
\Q}{(c)}|^{-1/2}\prod_{j=1}^{n}{(\delta_j z_j/ i)^{-1/2}e^{\pi i (-1/ z_j) (1/
\sigma_j(c)) \xi_j^2}} .
\]
By applying Poisson summation to the function $f_z$ and the lattice $\sigma(\a) \subset \R^n$, we get
\begin{align*}
\vartheta(z) &=  \frac{1}{\covol{(\sigma(\a))}} \sum_{\lambda^{\ast} \in \sigma(\a)^{\vee}}{\widehat{f_z}( \lambda^{\ast})} \\
&= \frac{1}{|\Nr_{K/ \Q}{(c)}|^{1/2}\covol{(\sigma(\a))}} \prod_{j=1}^{n}{( \delta_j
z_j/ i)^{-1/2}} \sum_{\beta \in c \a }{e^{\pi i \sum_{j=1}^{n}{(-1/z_j)(1/
\sigma_j(c)) \sigma_j(\beta)^2}}},
\end{align*}
where we used that $\a^{\vee} = c \a$, which follows from multiplying the relation  
$c \a^2 = \d^{-1}$, by $\a^{-1}$ and using the general formula $\b^{\vee} = 
\mathfrak{d}^{-1} \b^{-1}$. Writing $\beta = c \alpha$  and summing over $\alpha \in 
\a$, the above computation proves \[
\vartheta(z) = ( \delta_1 z_1/ i)^{-1/2} \cdots ( \delta_n z_n/ i)^{-1/2}  \vartheta(Sz)
\]
provided that $|\Nr_{K/ \Q}{(c)}|\covol{(\sigma(\a))}^2 = 1$ holds. This in turn
follows again from the relation $c \a^2 = \mathfrak{d}^{-1}$, the general volume
formula \eqref{eq:formula-covolume-fractional-ideal}  and properties of the ideal
norm.


We now define $\Omega_{\delta}^n := \{ z \in \H_{\delta}^n \,:\, \vartheta(z) \neq 0 
\}$, a nonempty open subset of $\H_{\delta}^n$ containing the product of  the imaginary 
axes, which is invariant under $\Gamma_{\vartheta}$ and the $1$-cocycle  $j_{\vartheta}: 
\Gamma_{\vartheta} \rightarrow \Hol{( \Omega_{\delta}^n, \C^{\times})}$ by
\begin{equation}\label{eq:def-automorphic-factor}
j_{\vartheta}(\gamma)(z) :=  j_{\vartheta}(\gamma, z) := \frac{\vartheta(\gamma z)}{\vartheta(z)}.
\end{equation}
Here, $\Hol{( \Omega_{\delta}^n, \C^{\times})}$ denotes the abelian group of all
nowhere vanishing, holomorphic functions on $\Omega_{\delta}^n$. Our computations
from above and the definitions imply that, for all $\beta \in \Oo_{K}$, all
$\varepsilon \in \Oo_{K}^{\times}$, all $z \in \Omega_{\delta}^n$ and all $\gamma_1,
\gamma_2 \in \Gamma_{\vartheta}$,
\begin{equation}\label{eq:properties-cocycle}
j_{\vartheta}(T^{2\beta})  =1, \quad j_{\vartheta}(M(\varepsilon)) = 1, \quad
j_{\vartheta}(S, z) = \prod_{j=1}^{n}{( \delta_j z_j/i)^{1/2}}, \quad
j_{\vartheta}(\gamma_1 \gamma_2) = (j_{\vartheta}(\gamma_1) \circ \gamma_2) \cdot
j_{\vartheta}(\gamma_1).
\end{equation}
It is not strictly necessary for our purposes, but, for convenience, we will lift 
$j_{\vartheta}$ to a cocycle $j_{\vartheta}: \Gamma_{\vartheta} \rightarrow 
\Hol{(\H_{\delta}^n, \C^{\times})}$. To explain how, note that, by our definition of 
$\Gamma_{\vartheta}$ via generators, and by \eqref{eq:properties-cocycle}, each 
function $j_{\vartheta}(\gamma)$ can we written as a finite product of functions 
$j_{\vartheta}(S) \circ\gamma'$ over some 
$\gamma'\in\Gamma_{\vartheta}$ and all of these are everywhere defined, holomorphic and nowhere vanishing on $\H_{\delta}^n$. Thus, we can (re-)define $j_{\vartheta}$ on \emph{generators}  by requiring that  \eqref{eq:properties-cocycle} holds. Any relation in $\Gamma_{\vartheta}$ will be respected in $\Hol{(\H_{\delta}^n, \C^{\times})}$ since the functions expressing the relation must agree on the non-empty open subset $\Omega_{\delta}^n \subset \H_{\delta}^n$. 

Finally, for any function $f$ on $\H_{\delta}^n$ with values in a complex vector space and any $\gamma \in \Gamma_{\vartheta}$, we define a new function $f|\gamma$ on $\H_{\delta}^n$ by
\begin{equation}\label{eq:definition-slash-action}
f| \gamma := j_{\vartheta}(\gamma)^{-1}  \cdot (f \circ \gamma), \quad \text{that is } \quad (f| \gamma)(z) = j_{\vartheta}(\gamma, z)^{-1}f (\gamma \cdot z).
\end{equation}
We extend this group action to the group algebra $\Rr := \C[\Gamma_{\vartheta}]$  in the usual way.


The next two lemmas hint at the usefulness of the action we just introduced, for the 
proof of Theorem \ref{thm:main-thm-ellipsoids}. Indeed, these Lemmas will essentially 
reduce the proof of Theorem \ref{thm:main-thm-ellipsoids} to  a purely algebraic 
statement about a right ideal in the algebra $\Rr$, which will be addressed in the 
next section.

\begin{lemma}\label{lem:action-fourier}
For every $A \in \Rr$ and $z \in \H_{\delta}^n$ we have $\mathcal{F}_{\R^n}((g_{\delta}|A)(z)) = (g_{\delta}|SA)(z)$.
\end{lemma}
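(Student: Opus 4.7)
The plan is to reduce to a single group element by linearity, handle that case using the explicit Fourier transform of the Gaussian, and then conclude via the cocycle identity for $j_{\vartheta}$.

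First, by $\C$-linearity of both the Fourier transform and the slash action (the latter being extended linearly from $\Gamma_{\vartheta}$ to $\Rr$), it suffices to prove the statement for $A = \gamma$ a single group element. So I want to show
\[
\mathcal{F}_{\R^n}\bigl((g_{\delta}|\gamma)(z)\bigr) = (g_{\delta}|S\gamma)(z)
\]
for every $\gamma \in \Gamma_{\vartheta}$ and $z \in \H_{\delta}^n$.

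Next I would settle the case $\gamma = e$. Unpacking the definitions, $(g_{\delta}|e)(z) = g_{\delta}(z)$ and $(g_{\delta}|S)(z) = j_{\vartheta}(S,z)^{-1} g_{\delta}(Sz) = \prod_j (\delta_j z_j/i)^{-1/2} g_{\delta}(-1/z)$, using the explicit formula for $j_{\vartheta}(S,\cdot)$ from \eqref{eq:properties-cocycle}. On the other hand, since $\delta_j z_j \in \H$, the one-dimensional Gaussian Fourier transform formula $\widehat{e^{\pi i a x^2}}(\xi) = (a/i)^{-1/2} e^{-\pi i \xi^2 /a}$ (valid for $a$ in the upper half plane, with the chosen branch of the square root) applied coordinate-wise gives exactly
\[
\widehat{g_{\delta}(z)}(\xi) = \prod_{j=1}^{n} (\delta_j z_j/i)^{-1/2} e^{\pi i \delta_j(-1/z_j)\xi_j^2} = \prod_{j=1}^{n} (\delta_j z_j/i)^{-1/2}\, g_{\delta}(-1/z)(\xi),
\]
so $\widehat{g_{\delta}(z)} = (g_{\delta}|S)(z)$. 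This is the case $A = e$.

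For general $\gamma$, since $j_{\vartheta}(\gamma, z)$ is a scalar independent of the Fourier variable $x$,
\[
\mathcal{F}_{\R^n}\bigl((g_{\delta}|\gamma)(z)\bigr) = j_{\vartheta}(\gamma, z)^{-1}\, \mathcal{F}_{\R^n}\bigl(g_{\delta}(\gamma z)\bigr) = j_{\vartheta}(\gamma, z)^{-1}\, j_{\vartheta}(S,\gamma z)^{-1}\, g_{\delta}(S\gamma z),
\]
where the second equality applies the $\gamma = e$ case at the point $\gamma z \in \H_{\delta}^n$ (note that $\H_{\delta}^n$ is $\Gamma_{\vartheta}$-invariant, so this is legitimate). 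The cocycle identity for $j_{\vartheta}$ reads $j_{\vartheta}(\gamma_1 \gamma_2, z) = j_{\vartheta}(\gamma_1, \gamma_2 z)\, j_{\vartheta}(\gamma_2, z)$, which specializes with $\gamma_1 = S$, $\gamma_2 = \gamma$ to $j_{\vartheta}(S\gamma, z) = j_{\vartheta}(S,\gamma z)\, j_{\vartheta}(\gamma, z)$. Substituting, the right-hand side above equals $j_{\vartheta}(S\gamma, z)^{-1} g_{\delta}(S\gamma z) = (g_{\delta}|S\gamma)(z)$, which is what was to be shown.

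There is no substantive obstacle here: everything reduces to the standard Gaussian Fourier formula plus the cocycle relation, and the only subtle point is bookkeeping with the signs $\delta_j$ and the chosen branch of the square root, which are already consistently set up in \S\ref{subsec:notation} and \S\ref{sec:automorphic-factor}.
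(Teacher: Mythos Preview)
Your proof is correct and follows essentially the same approach as the paper: reduce by linearity to a single group element, use the explicit Fourier transform $\widehat{g_{\delta}(z)} = j_{\vartheta}(S,z)^{-1}g_{\delta}(Sz)$, and then apply the cocycle identity for $j_{\vartheta}$ to combine $j_{\vartheta}(S,\gamma z)^{-1} j_{\vartheta}(\gamma,z)^{-1}$ into $j_{\vartheta}(S\gamma,z)^{-1}$. The only difference is that you spell out the base case $\gamma = e$ in more detail, whereas the paper simply cites the Gaussian Fourier formula.
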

\begin{proof}
By linearity, we may assume that $A \in \Gamma_{\vartheta}$. Given that $\widehat{g_{\delta}(z)} = j_{\vartheta}(S,z)^{-1}g_{\delta}(Sz)$ and the properties \eqref{eq:properties-cocycle},
\begin{align*}
\mathcal{F}((g_{\delta}|A)(z)) &= j_{\vartheta}(A, z)^{-1} \mathcal{F}(g_{\delta}(Az)) =j_{\vartheta}(A, z)^{-1}j_{\vartheta}(S, Az)^{-1} g_{\delta}(S(Az)) \\
&= j_{\vartheta}(SA,z)^{-1}g_{\delta}(SA z) = (g_{\delta}|SA)(z),
\end{align*}
as claimed.
\end{proof}
We denote by  $\Ii = \sum_{\beta \in \Oo_{K}}{ (1-T^{2\beta}) \Rr } $ the right ideal
generated by all elements $(1-T^{2\beta})$, $\beta \in \Oo_{K}$.
\begin{lemma}\label{lem:difference-of-translates-of-gaussians}
For all $A \in \Ii$ and all $z \in \H_{\delta}^{n}$, the function $(g_{\delta}|A)(z) : \R^n \rightarrow \C$ vanishes at all points $x = (x_1, \dots, x_n) \in \R^n$ for which there is $\alpha \in \a^{2}$ such that $x_j^2 = |\sigma_j(c)|\sigma_j(\alpha)$ for all $j$, that is to say, at all points of the set $E(c, \a)$, defined in \eqref{eq:def-set-E_F}.
\end{lemma}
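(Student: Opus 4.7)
The plan is to exploit the right-ideal structure of $\Ii$ together with the fact that the slash action is a right action of $\Rr$. Any $A\in\Ii$ is a finite sum $A=\sum_{k}(1-T^{2\beta_k})r_k$ with $\beta_k\in\Oo_K$ and $r_k\in\Rr$. Since the slash is a right action, $g_\delta|((1-T^{2\beta})r)=(g_\delta|(1-T^{2\beta}))|r$. By linearity in $A$, it therefore suffices to show, for arbitrary $\beta\in\Oo_K$ and $r\in\Rr$, that $(g_\delta|(1-T^{2\beta}))|r$ vanishes at every $x\in E(c,\a)$ for every $z\in\H_\delta^n$.

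First I would compute $h:=g_\delta|(1-T^{2\beta})$ explicitly. By \eqref{eq:properties-cocycle} we have $j_\vartheta(T^{2\beta})\equiv 1$, and $T^{2\beta}$ acts on $\H_\delta^n$ componentwise by $z_j\mapsto z_j+2\sigma_j(\beta)$. Hence
\begin{equation*}
h(z,x)=g_\delta(z,x)\bigl(1-e^{2\pi i\sum_{j=1}^n\delta_j\sigma_j(\beta)x_j^2}\bigr).
\end{equation*}
Now fix $x\in E(c,\a)$, so there is $\alpha\in\a^2$ with $x_j^2=|\sigma_j(c)|\sigma_j(\alpha)$ for every $j$. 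Since $\delta_j=\sgn(\sigma_j(c))$, we have $\delta_jx_j^2=\sigma_j(c)\sigma_j(\alpha)=\sigma_j(c\alpha)$, and therefore
\begin{equation*}
\sum_{j=1}^n\delta_j\sigma_j(\beta)x_j^2=\sum_{j=1}^n\sigma_j(\beta c\alpha)=\Tr_{K/\Q}(\beta c\alpha).
\end{equation*}
The element $\beta c\alpha$ lies in $\Oo_K\cdot c\a^2=\Oo_K\cdot\d^{-1}\subseteq\d^{-1}=\Oo_K^{\vee}$, so its trace is an integer and the exponential above equals $1$. Consequently $h(z,x)=0$ for every $z\in\H_\delta^n$ and every $x\in E(c,\a)$.

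Finally, expand $r=\sum_{i}\lambda_i\gamma_i$ with $\gamma_i\in\Gamma_\vartheta$ and $\lambda_i\in\C$. By \eqref{eq:definition-slash-action},
\begin{equation*}
(h|r)(z,x)=\sum_i\lambda_i\,j_\vartheta(\gamma_i,z)^{-1}h(\gamma_iz,x),
\end{equation*}
and each $\gamma_iz$ lies in $\H_\delta^n$, so the previous paragraph gives $h(\gamma_iz,x)=0$ for every $x\in E(c,\a)$. This yields $(h|r)(z,x)=0$ and completes the proof. The heart of the argument is the trace-integrality computation in the middle paragraph; there is no real obstacle, since the ideal $\Ii$ has been chosen precisely so that the difference $g_\delta-g_\delta|T^{2\beta}$ automatically vanishes on $E(c,\a)$, and the rest follows formally from the right-action property of the slash.
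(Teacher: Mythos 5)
Your proof is correct and follows essentially the same approach as the paper's: reduce by linearity to a single generator of the ideal, compute the difference $g_{\delta}-g_{\delta}|T^{2\beta}$ explicitly, and show that the exponential factor collapses to $1$ on $E(c,\a)$ via the trace-integrality computation $\Tr_{K/\Q}(\beta c\alpha)\in\Z$. The only cosmetic difference is that you first isolate $h=g_{\delta}|(1-T^{2\beta})$ and then slash by $r$ using the right-action property, whereas the paper takes $A=(T^{2\beta}-1)\gamma$ for a single $\gamma\in\Gamma_{\vartheta}$ and factors out $j_{\vartheta}(\gamma,z)^{-1}$ directly; these are the same argument.
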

\begin{proof}
By linearity, may assume that $A = (T^{2\beta}-1)\gamma$ for some $\gamma \in
\Gamma_{\vartheta}$ and some $\beta \in \Oo_{K}$. By definition and by
\eqref{eq:properties-cocycle}, we have
\[
(g_{\delta}|(T^{2\beta}-1)\gamma)(z) = (g_{\delta}|T^{2\beta}\gamma)(z)-(g_{\delta}|\gamma)(z) =
j_{\vartheta}(\gamma, z)^{-1}(g_{\delta}(\gamma z + 2\sigma(\beta))- g_{\delta}(\gamma z)).
\]
Set $\tau := \gamma z$. Then, for all $x = (x_1, \dots, x_n) \in \R^n$,
\[
g_{\delta}(\tau  + 2 \sigma(\beta))(x)- g_{\delta}(\tau)(x) = \left( e^{2 \pi i \sum_{j=1}^{n}{\delta_j\sigma_j(\beta) x_j^2}} -1 \right)g_{\delta}(\tau)(x).
\]
If there is $\alpha \in \a^2$ so that $x_j^2 = |\sigma_j(c)|\sigma_j(\alpha)$ for all $j$, then, since $\delta_j = \sigma_j(c)/|\sigma_j(c)|$, we have
\[
\sum_{j=1}^{n}{\delta_j\sigma_j(\beta) x_j^2 } = \sum_{j=1}^{n}{\sigma_j(c)
\sigma_j(\beta) \sigma_j(\alpha) } = \Tr_{K/ \Q}(c \beta \alpha) \in \Z,
\]
because $c \alpha \in \Oo_{K}^{\vee}$ and $\beta \in \Oo_{K}$. This proves what we want.
\end{proof}

\subsection{Ideals in the group algebra $\Rr = \C[\Gamma_{\vartheta}]$}\label{subsec:lemmas}
Lemma \ref{lem:action-fourier} and Lemma \ref{lem:difference-of-translates-of-gaussians} together show that, for any element 
$A \in \Rr$ which belongs to the ideal $\Ii$ and which  can also be written as $A = 
(1+ \epsilon S)A_1$ for some $A_1 \in \Rr$ and $\epsilon \in \{ \pm 1 \}$ is such 
that, for any $z \in \H_{\delta}^n$, the Schwartz function $f 
=(g_{\delta}|A)(z)$  vanishes at all points of the set $E(c, \a)$ and has Fourier 
transform $\widehat{f} = \epsilon f$. The next proposition will show that there are 
plenty of such elements $A$. It lies at the heart of our proof of Theorem  
\ref{thm:main-thm-ellipsoids} (and Theorem \ref{thm:main-thm-spheres}).
\begin{proposition}\label{prop:nonzero-intersection-of-ideals}
We have $(1-S) \Rr \cap \Ii \neq 0$ and $(1+S) \Rr \cap \Ii \neq 0$. Moreover, these 
intersections are infinite dimensional vector spaces over $\C$.
\end{proposition}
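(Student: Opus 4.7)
The plan is to reduce the proposition, via a coset-cycle construction, to exhibiting a single explicit relation of the form $(T^{2\alpha_1}S)(T^{2\alpha_2}S)\cdots(T^{2\alpha_8}S) = 1$ in $\Gamma_\vartheta$ with $\alpha_i \in \Oo_K$.

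Set $H := \langle T^{2\beta} \rangle_{\beta \in \Oo_K} \leq \Gamma_\vartheta$ and let $\pi : \Rr \to \Rr/\Ii$ denote the quotient map. The $\C$-vector space $\Rr/\Ii$ has canonical basis indexed by the right cosets $H \backslash \Gamma_\vartheta$, and $(1 \mp S)B \in \Ii$ precisely when $\pi(B) = \pm \pi(SB)$. Suppose we can find eight distinct elements $g_1,\ldots,g_8 \in \Gamma_\vartheta$ with $HSg_i = Hg_{i+1}$ (indices mod $8$); then $B := \sum_{i=1}^{8} g_i$ yields $(1-S)B \in \Ii$ and $B' := \sum_{i=1}^{8}(-1)^i g_i$ yields $(1+S)B' \in \Ii$. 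Such a cycle is produced by setting $g_1 := e$ and $g_{i+1} := (T^{2\alpha_{9-i}}S)\, g_i$, the closure $g_9 = g_1$ being equivalent to the displayed $8$-block identity. Both $(1\pm S)\Rr$ and $\Ii$ are right ideals of $\Rr$, so once a nonzero $A$ in their intersection is produced, the translates $A \cdot T^{2N\mu}$ for a fixed $\mu \in \Oo_K \setminus \{0\}$ and $N$ ranging over a sufficiently sparse subset of $\N$ have pairwise disjoint finite supports and hence are $\C$-linearly independent, yielding the asserted infinite-dimensional family.

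To produce the $8$-block relation I exploit the Hilbert modular group structure. A direct $2 \times 2$ matrix multiplication establishes that for any $\beta, \gamma \in \Oo_K$ with $\eta := 1 - 4\beta\gamma \in \Oo_K^{\times}$,
\[
(T^{-2\gamma\eta^{-1}}S)(T^{2\beta}S)(T^{2\gamma}S)(T^{-2\beta\eta^{-1}}S) = M(\eta^{-1}) \quad \text{in } \PSL_2(\Oo_K).
\]
For $K$ totally real of degree $n \geq 2$, the unit group $\Oo_K^{\times}$ has rank $n - 1 \geq 1$ while $(\Oo_K/4\Oo_K)^{\times}$ is finite, so there exists a unit $\varepsilon \neq 1$ with $\varepsilon \equiv 1 \pmod{4}$. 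Writing $\varepsilon = 1 - 4\lambda$ with $\lambda \in \Oo_K \setminus \{0\}$ and applying the above identity first with $(\beta, \gamma) = (1, \lambda)$ (so $\eta = \varepsilon$) and then with $(\beta, \gamma) = (1, -\lambda\varepsilon^{-1})$ (so $\eta = \varepsilon^{-1}$), I obtain $M(\varepsilon^{-1})$ and $M(\varepsilon)$ each as a product of four $(T^{2\ast}S)$-blocks with exponents in $\Oo_K$. The tautology $M(\varepsilon^{-1})M(\varepsilon) = 1$ then unfolds into the required $8$-block identity, with all eight $\alpha_i$ explicit in terms of $\lambda$ and $\varepsilon^{-1}$.

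It remains to verify $(1 \pm S)B \neq 0$, i.e.\ that the sixteen group elements $\{g_i\}_{i=1}^{8} \cup \{S g_i\}_{i=1}^{8}$ are pairwise distinct; any coincidence reduces, via the cyclic structure, to some proper sub-product of the $(T^{2\alpha_i}S)$-blocks being the identity (or $S$), which in turn, via the four-block identity, forces $\lambda = 0$ and hence $\varepsilon = 1$, contradicting the choice of $\varepsilon$. The main technical obstacle is the four-block matrix identity displayed above, which is a direct but slightly tedious computation analogous to classical continued-fraction identities in the Hilbert modular group.
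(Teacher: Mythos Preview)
Your approach is structurally the same as the paper's: both reduce to exhibiting a closed $8$-cycle $Hg_1,\ldots,Hg_8$ of right $H$-cosets under left multiplication by $S$, equivalently an $8$-block relation $(T^{2\alpha_1}S)\cdots(T^{2\alpha_8}S)=1$, and then take $B=\sum g_i$ and $B'=\sum(-1)^ig_i$. The paper phrases this via matrices $\gamma_r=\begin{psmallmatrix}c_{r-1}&d_{r-1}\\c_r&d_r\end{psmallmatrix}$ sharing consecutive bottom rows, but this is exactly the same cycle condition.

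The genuine difference lies in the explicit relation. The paper's eight matrices were found by experimentation and use \emph{two} nontrivial units, one $\equiv 1\pmod 4$ and one $\equiv 1\pmod 3$. Your route via the four-block identity
\[
(T^{-2\gamma\eta^{-1}}S)(T^{2\beta}S)(T^{2\gamma}S)(T^{-2\beta\eta^{-1}}S)=M(\eta^{-1}),\qquad \eta=1-4\beta\gamma\in\Oo_K^\times,
\]
is more systematic and needs only a single unit $\varepsilon\equiv 1\pmod 4$ (applied once with $\eta=\varepsilon$ and once with $\eta=\varepsilon^{-1}$). This identity is correct and is a clean structural explanation for why such cycles exist.

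One point needs tightening. Your non-vanishing argument asserts that all sixteen elements $\{g_i\}\cup\{Sg_i\}$ are pairwise distinct and that any coincidence ``forces $\lambda=0$''. This is stronger than needed and is not fully justified (checking all sub-products of up to seven blocks against $1$ and $S$ is a genuine case analysis you have not carried out). The paper's method is simpler and works for your construction too: just compute the coefficient of the identity in $(1\mp S)B$. Since $g_1=e$, it suffices to check that $g_i\notin\{e,S\}$ for $2\le i\le 8$. Using $g_5=M(\varepsilon)$ (with $\varepsilon\ne\pm1$), $g_8=ST^{2\lambda\varepsilon^{-1}}$, $g_7=V^2T^{2\lambda\varepsilon^{-1}}$, and direct inspection of $g_2,g_3,g_4,g_6$, this is a quick verification; then the coefficient of $e$ in $(1-S)B$ is $1$ and in $(1+S)B'$ is $-1$.
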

\begin{proof}
We first note that if $\Jj \subset \Rr$ is any \emph{nonzero} right ideal, then, 
since the group $\Gamma_{\vartheta}$ is infinite, we can produce an arbitrarily high 
number of right translates of a  single nonzero element in $\Jj$ that have disjoint 
supports (say), showing that $\dim_{\C}(\Jj) = \infty$. So we only need to show that 
$(1 \pm S) \Rr \cap \Ii \neq 0$.

To do that, we note that if two elements $ \gamma_1, \gamma_2 \in \Gamma_{\vartheta}$ 
have the same bottom row (possibly up to sign), 
then $\gamma_1- \gamma_2 = (1- \gamma_{2} \gamma_1^{-1}) 
\gamma_1 \in \Ii$. It thus suffices to construct  $A_{+ }, A_{-} \in \Rr$ such that 
$(1 -S) A_{-}$ and $(1 + S) A_{+}$ can be written  as non-trivial finite sums of 
differences of group elements with equal bottom row. We also know that 
left multiplication by~$S$ interchanges the rows of a matrix and switches the sign on 
the top. Guided by these two observations, we make the Ansatz
\[
A_{-} = \sum_{r \in \Z/2n \Z}{\gamma_r}, \quad \gamma_{r} = \begin{pmatrix}
c_{r-1} & d_{r-1}\\ c_{r} & d_{r}
\end{pmatrix}, \qquad A_{+} = \sum_{r \in \Z/2n \Z}{(-1)^r\gamma_r'}, \quad 
\gamma_{r}' =\begin{pmatrix}
 c_{r-1}' & d_{r-1}'\\ c_{r}' & d_{r}'
\end{pmatrix},
\]
where $n \geq 1$ and $c_r, d_r, c_r', d_r' \in \Oo_{K}$  are to be found so that all
elements $\gamma_r, \gamma_r'$ belong to $\Gamma_{\vartheta}$ and such that $0 \neq (1 \pm S)A_{ \pm }$ because these elements always belong to $\Ii$. Some experimentation shows that there are no non-trivial examples for $n =1, 2,3$  and further experimentation yields an example for $n = 4$ as follows. Choose $a,b,x,y \in \Oo_{K}$ such that
\begin{equation}\label{eq:key-equation}
(1+4a)(1+4x) = 1=(1-3b)(1-3y), \qquad axby \neq 0.
\end{equation}
This is possible by Dirichlet's unit Theorem, which implies that for all non-zero integral ideals $\a \subset \Oo_K$, the kernel of the natural map $\Oo_{K}^{\times} \rightarrow (\Oo_{K}/\mathfrak{a})^{\times}$ is infinite (use this for $\mathfrak{a} =
4 \Oo_{K}$ or $3 \Oo_{K}$). Consider then the elements $\gamma_r = \gamma_r'$ defined by
\begin{align*}
\gamma_0 = \begin{pmatrix}
1 & 0\\
0 & 1
\end{pmatrix}, \quad
\gamma_1 = \begin{pmatrix}
0 &1\\
-1 & 2a
\end{pmatrix}, \quad \gamma_2 = \begin{pmatrix}
-1 & 2a\\
2 & -(1+4a)
\end{pmatrix}, \quad \gamma_3 = \begin{pmatrix}
2 & -(1+4a)\\
\frac{1-4b}{1+4a} & 2b
\end{pmatrix},\\
\gamma_4 = \begin{pmatrix}
\frac{1-4b}{1+4a} & 2b\\
2 y & \frac{1-4y}{1+4x}
\end{pmatrix}, \quad \gamma_5 = \begin{pmatrix}
2 y & \frac{1-4y}{1+4x}\\
-(1+4x) & 2
\end{pmatrix}, \quad \gamma_6 = \begin{pmatrix}
-(1+4x) & 2\\
2x & -1
\end{pmatrix}, \quad \gamma_7 = \begin{pmatrix}
2x & -1\\
1 & 0
\end{pmatrix}.
\end{align*}
We claim that:  (i) each $\gamma_r$  belongs to $\Gamma_{\vartheta}$ and (ii) 
that $(1 \pm S)A_{ \pm } \neq 0$.  To prove (i), we first verify, by computing 
determinants and using \eqref{eq:key-equation}, that each $\gamma_r$ belongs 
to the congruence group $\tilde{\Gamma}_{\vartheta} \supset \Gamma_{\vartheta}$ 
defined in Remark \ref{rmk:theta-group}. On the other hand, for $r \neq 4$, 
either one of the diagonal or off-diagonal entries of $\gamma_r$ is a unit, 
so that,  by multiplying $\gamma_r$ from the right or the left by  
$S^{\delta_1}T^{2\alpha} S^{\delta_2}$ with suitable $\alpha \in \Oo_K$, 
$\delta_1, \delta_2 \in \{0,1 \}$, we obtain a matrix in $\tilde{\Gamma}_{\vartheta}$ 
one of whose diagonal or off-diagonal entries is zero and hence belongs to  
$\Gamma_{\vartheta}$. For $\gamma_4$, note that $\gamma_4 T^{2(1 + 4a)}$ has lower  right entry equal to $1+4a$, which is a unit.  


To verify  (ii) note that, since none of $a,b,x,y$ is zero, we have
$\{\gamma_r \}_{r \in \Z/8 \Z} \cap \{S,1\} = \{1 \}$, so that the coefficient of $1 \in \Gamma_{\vartheta}$ in the finite sum $(1   \pm S)A_{ \pm }$ is $1 \in \C$. 
\end{proof}

%
Having proved Proposition \ref{prop:nonzero-intersection-of-ideals} it remains to 
show that we can produce any number of linearly independent functions 
$(g_{\delta}|A)(z)$ by varying $A \in \Ii \cap (1 \pm S) \Rr$ and $z \in 
\H_{\delta}^n$ suitably. This will be achieved via the next  lemma and its consequences.
\begin{lemma}\label{lem:linear-independence}
Let  $c_1, \dots, c_{m} \in \C^n$ be pairwise distinct. Then the functions $g_{\mu}: \R^n \rightarrow \C$, $g_{\mu}(r) = e^{\sum_{j=1}^{n}{c_{\mu,j}r_j^2}}$, $\mu = 1, \dots, m$, are $\C$-linearly independent.
\end{lemma}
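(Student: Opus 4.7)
The plan is to reduce the statement to the classical linear independence of complex exponentials of one real variable by restricting each $g_\mu$ to a carefully chosen line through the origin.

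First I would choose a vector $w = (w_1, \dots, w_n) \in (0,\infty)^n$ such that the $m$ complex numbers
\[
\lambda_\mu := \sum_{j=1}^{n} c_{\mu,j} w_j, \qquad \mu = 1, \dots, m,
\]
are pairwise distinct. Such a $w$ exists because for each pair $\mu \neq \nu$ the vector $c_\mu - c_\nu \in \C^n$ is nonzero, so the $\R$-linear map $w \mapsto \langle c_\mu - c_\nu, w\rangle \in \C$ is nonzero (evaluate on the standard basis vector $e_j$ with $c_{\mu,j}-c_{\nu,j} \ne 0$), hence its kernel is a proper $\R$-linear subspace of $\R^n$. The complement of the finite union of these subspaces is open and dense, so it meets the open set $(0,\infty)^n$.

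Next I would set $v_j := \sqrt{w_j}>0$ and restrict each $g_\mu$ to the line $t \mapsto tv$, $t \in \R$. A direct substitution gives
\[
g_\mu(tv) \;=\; e^{\,t^2 \sum_j c_{\mu,j} v_j^2} \;=\; e^{\,t^2 \lambda_\mu}.
\]
Assuming a linear relation $\sum_{\mu=1}^m \alpha_\mu g_\mu \equiv 0$ on $\R^n$, this restriction together with the change of variables $s = t^2$ yields
\[
\sum_{\mu=1}^m \alpha_\mu e^{s \lambda_\mu} = 0 \qquad \text{for all } s \geq 0.
\]
Since the left-hand side is an entire function of $s$, the identity extends to all of $\R$ (or $\C$).

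Finally I would invoke the classical linear independence of the exponentials $\{e^{s\lambda_\mu}\}_\mu$ with pairwise distinct $\lambda_\mu \in \C$: evaluating the $k$-th derivative at $s = 0$ for $k = 0, 1, \dots, m-1$ leads to a Vandermonde system with determinant $\prod_{\mu<\nu}(\lambda_\nu - \lambda_\mu) \neq 0$, forcing $\alpha_1 = \dots = \alpha_m = 0$. There is no real obstacle in this argument; the only mildly delicate point is ensuring the auxiliary vector $w$ simultaneously separates all $\binom{m}{2}$ differences $c_\mu - c_\nu$, which is handled by the finite-union-of-hyperplanes observation above.
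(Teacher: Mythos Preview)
Your proof is correct but follows a genuinely different route from the paper's. The paper argues by induction on $m$: given $\sum_{\mu} t_\mu g_\mu = 0$, one divides by $g_1$, differentiates with respect to $r_j$, and divides by $r_j$ to obtain a relation $\sum_{\mu \ge 2}(c_{\mu,j}-c_{1,j})t_\mu g_\mu = 0$; the inductive hypothesis then forces $(c_{\mu,j}-c_{1,j})t_\mu=0$ for all $j$ and $\mu\ge2$, hence $t_\mu=0$.

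What you do instead is pick a generic direction $w\in(0,\infty)^n$ so that the scalars $\lambda_\mu=\langle c_\mu,w\rangle$ are distinct, restrict each $g_\mu$ to the line through $\sqrt{w}$, and reduce to the classical one-variable linear independence of $\{e^{s\lambda_\mu}\}$ via a Vandermonde determinant. The paper's argument is shorter and entirely self-contained, staying in $\R^n$ throughout with no auxiliary choice; your argument requires the small Baire-type step of avoiding finitely many hyperplanes, but it cleanly modularizes the problem into (i) a genericity lemma and (ii) a textbook one-variable fact, which can be appealing if one wants to reuse the one-variable result elsewhere.
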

\begin{proof}
We induct on $m \geq 1$, the case $m =1$ being clear. If $m \geq 2$ and $\sum_{\mu=1}^{m}{t_{\mu}g_{\mu}} = 0$ for some $t_{\mu} \in \C$, we divide by $g_{1}$ and differentiate with respect to $r_j$, giving $\sum_{\mu=2}^{m}{2(c_{\mu,j}-c_{1,j})r_j t_{\mu }g_{\mu}(r)} =0$. By continuity we may also divide by $r_j$ and apply the inductive hypothesis to deduce $(c_{\mu,j}-c_{1,j})t_{\mu} = 0$ for all $j$ and $\mu$.  Since $c_1 \neq c_{\mu}$ for all $\mu \geq 2$, this implies $t_{\mu} = 0$ for all $\mu \geq 2$ and then also $t_{1} = 0$.
\end{proof}
\begin{corollary}\label{cor:injectivity-on-good-points}
Let $z \in \H_{\delta}^n$ be a point such that for all $\gamma, \omega \in
\Gamma_{\vartheta}$, we have $\gamma \neq \omega \Rightarrow \gamma z \neq \omega z$.
Then the map $\Phi_z :\Rr \rightarrow \Ss(\R^n)$, $\Phi_z(A) = (g_{\delta}|A)(z)$ is
injective.
\end{corollary}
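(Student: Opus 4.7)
The plan is to expand an arbitrary element $A \in \Rr$ in the canonical group-element basis of the group algebra and to apply Lemma \ref{lem:linear-independence} to the resulting linear combination of Gaussians, using the hypothesis on $z$ precisely to guarantee that the Gaussian parameters are all distinct.

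In detail, I would first write any $A \in \Rr$ uniquely as a finite sum $A = \sum_{\gamma \in \Gamma_{\vartheta}} a_{\gamma} \gamma$ with $a_{\gamma} \in \C$, so that by the definition \eqref{eq:definition-slash-action} of the slash action,
\[
\Phi_z(A) = (g_{\delta}|A)(z) = \sum_{\gamma} a_{\gamma}\, j_{\vartheta}(\gamma,z)^{-1}\, g_{\delta}(\gamma z).
\]
Each summand is a Gaussian of the form appearing in Lemma \ref{lem:linear-independence}, with parameter vector
\[
c_{\gamma} := \bigl(\pi i\, \delta_1 (\gamma z)_1,\ \dots,\ \pi i\, \delta_n (\gamma z)_n\bigr) \in \C^n.
\]

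Next I would verify that $\gamma \mapsto c_{\gamma}$ is injective on the support of $A$. If $\gamma \neq \omega$ in $\Gamma_{\vartheta}$, the hypothesis on $z$ gives $\gamma z \neq \omega z$ in $\H_{\delta}^n$, so their components differ in at least one coordinate $j$, which means $\delta_j (\gamma z)_j \neq \delta_j (\omega z)_j$ (recall $\delta_j \in \{\pm 1\}$); hence $c_{\gamma} \neq c_{\omega}$. Lemma \ref{lem:linear-independence} then asserts that the finitely many Gaussians $\{g_{\delta}(\gamma z)\}_{\gamma}$, indexed by those $\gamma$ in the support of $A$, are linearly independent as functions $\R^n \to \C$.

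Finally, since each factor $j_{\vartheta}(\gamma,z)$ is a nonzero complex number (the function $j_{\vartheta}(\gamma)$ is nowhere vanishing on $\H_{\delta}^n$ by construction in \S\ref{sec:automorphic-factor}), the relation $\Phi_z(A) = 0$ is a nontrivial-coefficient vanishing relation among linearly independent Gaussians, forcing $a_{\gamma} j_{\vartheta}(\gamma,z)^{-1} = 0$ and hence $a_{\gamma} = 0$ for every $\gamma$. Thus $A = 0$ and $\Phi_z$ is injective. No real obstacle is expected; the statement is essentially the combination of Lemma \ref{lem:linear-independence} with the fact that the slash action multiplies each Gaussian by a nonzero scalar, and the only thing to check is the translation from $\gamma z \neq \omega z$ to distinctness of the Gaussian exponent vectors.
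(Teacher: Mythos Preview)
Your proof is correct and follows essentially the same approach as the paper: expand $A$ in the group-element basis, observe that the hypothesis on $z$ makes the Gaussian parameters $\pi i\,\delta_j(\gamma z)_j$ pairwise distinct, and invoke Lemma~\ref{lem:linear-independence} together with the nonvanishing of $j_{\vartheta}(\gamma,z)$. If anything, you are slightly more careful than the paper in recording the factors $\delta_j$ in the exponent vectors.
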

\begin{proof}
Suppose that $A  = \sum_{\gamma \in \Gamma_{\vartheta}}{\lambda_{\gamma} \gamma} \in \Rr$  is such that $\Phi_z(A) =0$. Let $\{\gamma_1, \dots, \gamma_m\} = \{ \gamma \in \Gamma_{\vartheta} \,:\, \lambda_{\gamma} \neq 0 \}$ be the support of $A$ (with pairwise distinct $\gamma_i$). By assumption, we have $0=\Phi_z(A) = \sum_{i=1}^{m}{\lambda_{\gamma_i}j_{\vartheta}(\gamma_i,z)^{-1}g(\gamma_i z)}$, so $\lambda_{\gamma_i} = 0$ follows by applying Lemma \ref{lem:linear-independence} to $c_{\mu} = \pi i \gamma_{\mu} z$.
\end{proof}
There are uncountably many points $z \in \H_{\delta}^n$ satisfying the assumption of Corollary \ref{cor:injectivity-on-good-points}; let us call such points \emph{good} (for the field $K$). To see this, note that the set of good points contains (since $\Gamma_{\vartheta} \subset \Gamma$)
\begin{equation}\label{eq:good-intersection}
\bigcap_{1 \neq \gamma \in \Gamma}{\{ z \in \H_{\delta}^n \,:\, \gamma z \neq z \}}= \H_{\delta}^n \sm  \bigcup_{1 \neq \gamma \in \Gamma}{\{ z \in \H_{\delta}^n \,:\,  \gamma z = z \}}
\end{equation}
and that each fix point set in the union on the right is either empty or a singleton 
set. Since $\Gamma$ is countable and $\H_{\delta}^n$ is uncountable, the above set is 
indeed uncountable. (It is  moreover dense in $\H_{\delta}^n$, by Baire's theorem, 
but we won't need this fact.) We call a point belonging to the intersection 
\eqref{eq:good-intersection} a \emph{generic} point (for the field $K$). Thus, all 
generic points are good.

\subsection{Conclusion}\label{sec:conclusions-pf-thm-2}
We can now give the proof of Theorem  \ref{thm:main-thm-ellipsoids}.
\begin{proof}[Proof of Theorem  \ref{thm:main-thm-ellipsoids}]
Fix $\epsilon \in \{ \pm 1\}$ and a good point $z \in 
\H_{\delta}^n$ for the field $K$.  By Corollary \ref{cor:injectivity-on-good-points}, the linear
map $\Phi_z :\Rr \rightarrow \Ss(\R^n)$, $\Phi_z(A) = (g_{\delta}|A)(z)$ is injective. Note that it takes values in the space $V \subset \Ss(\R^n)$ of all linear combinations of Gaussians. By Proposition \ref{prop:nonzero-intersection-of-ideals}, the space
$\Jj_{\epsilon}: = \Ii \cap (1 +
\epsilon S)\Rr$ and hence also $\Phi_z(\Jj_{\epsilon})$ is infinite dimensional. On the
other hand, by Lemma \ref{lem:action-fourier} and Lemma
\ref{lem:difference-of-translates-of-gaussians}, the space $\Phi_z(\Jj_{\epsilon})$ is
contained in the space of all $f \in V$ satisfying $\widehat{f} = \epsilon f$
and $f(x) = 0$ for all $x \in E(c, \a)$, proving the Theorem.
\end{proof}

\section{Proof of Theorem \ref{thm:main-thm-spheres}}\label{sec:pf-of-thm-1}
In this section we give the proof of Theorem \ref{thm:main-thm-spheres}. We let, as
usual,~$K$ be a totally real number field of degree $n \geq 2$ and use notation
associated with it as in~\S\ref{sec:introduction}. We will also use some of the notation and results of \S\ref{sec:pf-thm-2}, in
particular, the eight elements $\gamma_r$, $r \in \Z/8\Z$ given in the proof of
Proposition \ref{prop:nonzero-intersection-of-ideals}, Lemma
\ref{lem:linear-independence} and the notion of a \emph{generic} point for $K$, as defined near \eqref{eq:good-intersection} (but with $\H_{\delta}^n$ replaced by $\H^n$). The entries of the
matrices $\gamma_r$ depend on a non-trivial solution  $a,b,x,y \in \Oo_{K}$ to the equation
\eqref{eq:key-equation}. Further below, we will need to assume in addition that
\begin{equation}\label{eq:positivity-assumption}
\text{\emph{all four units} } \quad (1+4a), (1+4x), (1-3b), (1-3y) \in
\Oo_{K}^{\times} \quad \text{\emph{are totally positive}}.
\end{equation}
This is possible, since the subgroup  of totally
positive units in $\Oo_{K}^{\times}$ is infinite (indeed, already the subgroup of 
squared units is infinite, by Dirichlet's unit Theorem).

As advertised in \S \ref{subsec:multi-radial-case},
we work for all of \S \ref{sec:pf-of-thm-1}, on $\R^d
= \R^{d_1} \times \cdots  \times \R^{d_n}$ and with the corresponding Gaussians $g(z)
: \R^d \rightarrow \C$, defined as in \eqref{eq:def-gaussian}. This will prove a more 
general statement than Theorem~\ref{thm:main-thm-spheres}.  We also fix a sign $\epsilon \in
\{ \pm 1\}$ and consider a generic point $z = (z_1, \dots, z_n) \in \H^n$. We  use the short hand notation $\mu(z) := \prod_{j=1}^{n}{(z_j/i)^{d_j/2}} \in \C^{\times}$.  For a set of coefficients
$\{\lambda_r(z)\}_{r \in \Z/8 \Z} \subset \C$ which we will determine later,
consider the linear combination of Gaussians
    \[    h_{z} = \sum_{r \in \Z/8 \Z}{\lambda_r(z) g(\gamma_r  z)},    \]
where the matrices $ \gamma_r \in \Gamma$, $r \in \Z/8 \Z$, are as in the proof of Proposition \ref{prop:nonzero-intersection-of-ideals}. We define and compute, using \eqref{eq:fourier-transform-gaussian},
\begin{align*}
f_z := h_z + \epsilon \widehat{h_z} &=  \sum_{r \in \Z/8 \Z}{\lambda_{r}(z) g(\gamma_{r}  z)} +  \sum_{r \in \Z/8 \Z}{ \epsilon \lambda_r(z) \mu(\gamma_r z)^{-1} g(S\gamma_r  z)}  \\
&=  \sum_{r \in \Z/8 \Z}{\left(\lambda_{r-1}(z) g(\gamma_{r-1}  z) + \epsilon \lambda_r(z) \mu(\gamma_r z)^{-1} g(S\gamma_r  z)\right) }.
\end{align*}
By construction, $\widehat{f_z} = \epsilon f_z$. We claim that the coefficients 
$\lambda_r(z)$ can be chosen in such a way that
\begin{equation}\label{eq:recursion-condition-lambda_r}
\lambda_r(z) \neq 0 \quad \text{and} \quad 
\epsilon \lambda_r(z) \mu(\gamma_r z)^{-1} =- \lambda_{r-1}(z) \quad \text{ for all }  r \in \Z/ 8 \Z. 
\end{equation}
We postpone the proof of the claim to a later stage. Assuming it, we get
\begin{equation}\label{eq:formula-f_z}
f_z = \sum_{r \in \Z/8 \Z}{\lambda_{r-1}(z) \left( g(\gamma_{r-1} z)- g(S \gamma_r z) \right)}.
\end{equation}
Each difference $g(\gamma_{r-1} z)- g(S \gamma_r z)$ vanishes (in the sense defined in \S \ref{subsec:multi-radial-case}) on $\sqrt{\Oo_{K}^{\vee}}$, since, by construction, $S \gamma_r = T^{2 \beta_r}
\gamma_{r-1}$ for some $\beta_r \in \Oo_{K}$ and so
\[
S\gamma_r z =T^{2\beta_r} \gamma_{r-1}z=   \gamma_{r-1}z+ 2 \sigma(\beta_r),
\]
implying  that, if there is $\alpha \in \Oo_{K}^{\vee}$ so that  $|x_j|^2 = \sigma_j(\alpha)$ for all $j$, then
\[
f_z(x) = \sum_{r \in \Z/8 \Z}{\lambda_{r-1}(z) e^{\pi i \sum_{j=1}^n{\sigma_j(\gamma_{r-1})z_j|x_j|^2}} \left(1- e^{ 2 \pi i \sum_{j=1}^n{\sigma_j(\beta_r) \sigma_j(\alpha)}} \right)} =0,
\]
because $\sum_{j=1}^{n}{\sigma_j(\beta_r) \sigma_j(\alpha)} = \Tr(\alpha \beta_r) \in\Z$. 

So far, $z$ was an arbitrary generic point. We now verify that $f_z \neq 0$ and that 
we can produce an arbitrary number of linearly independent functions of this form. 
Since $z$ is generic for $K$, we have
\[
\{ r \in \Z/8 \Z \,:\, \gamma_r z = z \text{ or } S \gamma_{r}z = z\} =  \{ r\in \Z/8 \Z \,:\, \gamma_r =1 \text{ or } S \gamma_r = 1 \} = \{0 \}
\]
and this shows $f_z \neq 0$ via Lemma \ref{lem:linear-independence} and $\lambda_r(z) \neq 0$ for all $r$. Assume we have constructed linearly independent $f_{\tau_1}, \dots, f_{\tau_m}$ of this form with generic $\tau_j \in \H^n$ (here, the subscripts do \emph{not} denote coordinates). Since the set of generic points for $K$ is infinte (indeed uncountable), we can choose a generic point $\tau_{m+1} \in \H^n \smallsetminus \{ \tau_1, \dots, \tau_m \}$  and the functions $f_{\tau_1}, \dots, f_{\tau_{m+1}}$ are then linearly independent as well. Indeed, if $0 = \sum_{i=1}^{m+1}{t_i f_{\tau_i}} = \sum_{w \in \H^n}{a_w g(w)}$, for $t_i \in \C$ and (unique) $a_w \in \C$ we find that $0 =a_{\tau_i} = t_{i}$ for all $i$, as desired.

To finish the proof of Theorem  \ref{thm:main-thm-spheres}, it remains to  prove the claim made in \eqref{eq:recursion-condition-lambda_r}. A short calculation shows that this claim is 
equivalent to
\begin{equation}\label{eq:consistency-product}
1=\prod_{r \in \Z/8 \Z}{\mu(\gamma_r z)} = \prod_{r \in \Z/8 \Z}{\prod_{j=1}^{n}{((\sigma_j(\gamma_r)z_j)/i)^{d_j/2}}}.
\end{equation}
Indeed, if \eqref{eq:consistency-product} holds, we can choose an arbitrary constant $\lambda_0 = \lambda_0(z) \in \C^{\times}$ and put  
\[
\lambda_{k + 8 \Z}(z) = (-\epsilon)^k \lambda_0 \prod_{1 \leq i \leq k}{\mu(\gamma_{i + 8 \Z} z)} \quad \text{for } 1 \leq k \leq 7.
\]
Let us denote the product on the right of \eqref{eq:consistency-product} by 
$\rho(z)$. From the specific shape of the $\gamma_r$, it is clear that $\rho(z)^8 = 
1$. Since $\H^n$ is connected, we deduce that the continuous function $z \mapsto 
\rho(z)$ is constant, with constant value given by an eighth root of unity $\rho$. To 
determine $\rho$, we will take the points $z_j$ to $i \infty$.  For this we need the 
following lemma.
\begin{lemma}\label{lem:argument-formula}
For any $g = \begin{psmallmatrix}
a & b\\
 c & d
\end{psmallmatrix} \in \SL_2(\R)$, we have
\begin{equation}\label{eq:argument-formula}
\lim_{y \rightarrow \infty}{  \frac{((g  \cdot (iy))/i)^{1/2}}{|g \cdot (iy)|^{1/2}}} = \exp(- \tfrac{\pi i}{4} \sgn( a c)) = e(-\tfrac{1}{8}\sgn( a c )),
\end{equation}
where we write $e(w) = \exp(2 \pi i w)$ and where, here and elsewhere, the conventions of \S \ref{subsec:notation} are in place.
\end{lemma}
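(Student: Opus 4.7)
The plan is to compute $g\cdot(iy)$ explicitly, observe that its image under $z\mapsto z/i$ lies in the right half plane $\mathcal{H}$ (where the chosen branch of the square root is well defined and continuous), and then read off the limit of the argument.

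First, I would write
\[
g\cdot(iy) = \frac{iay+b}{icy+d} = \frac{(acy^2+bd) + iy}{c^2y^2+d^2},
\]
using $ad-bc=1$. Denote this as $X(y)+iY(y)$ with $Y(y)=y/(c^2y^2+d^2)>0$. Since the ratio in \eqref{eq:argument-formula} has modulus one, and since $|g\cdot(iy)|^{1/2}$ is real positive, it suffices to track the argument of $(g\cdot(iy))/i = Y(y)-iX(y)$, whose real part $Y(y)$ is positive; thus its argument lies in $(-\pi/2,\pi/2)$ and equals $2\arg\bigl(((g\cdot(iy))/i)^{1/2}\bigr)$ with the branch of \S \ref{subsec:notation}.

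The core of the argument is then to compute the limit of the argument as $y\to\infty$ in two cases. If $c\neq 0$, then $X(y)\to a/c$ and $Y(y)\to 0^{+}$, so $(g\cdot(iy))/i$ approaches $-i(a/c)$ from within the right half plane. Hence its argument tends to $-\tfrac{\pi}{2}\sgn(a/c) = -\tfrac{\pi}{2}\sgn(ac)$. If $c=0$, then $ad=1$ so $(g\cdot(iy))/i = a^2 y - iab$, whose argument tends to $0$; and $\sgn(ac)=0$ as well.

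Dividing by $2$ and exponentiating gives \eqref{eq:argument-formula} in both cases. The only subtle point is making sure the branch of the square root behaves continuously along the path $y\mapsto (g\cdot(iy))/i$ in $\mathcal{H}$, which is automatic since this path stays in the simply connected domain $\mathcal{H}$ on which the branch is defined; no monodromy is incurred.
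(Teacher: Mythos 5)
Your approach is sound and essentially the same as the paper's: compute $g\cdot(iy)$ explicitly, track the argument of $(g\cdot(iy))/i$ as $y\to\infty$, and halve it. The closed form
\[
g\cdot(iy)=\frac{(acy^2+bd)+iy}{c^2y^2+d^2}=:X(y)+iY(y)
\]
is a clean unification. The paper instead replaces $g$ by $-g$ to normalize $c\ge0$ and then treats three cases ($c=0$; $c>0,a=0$; $c>0,a\ne0$) one by one, which is a bit more hands-on but avoids the subtlety below.

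There is, however, a genuine (if easily repaired) gap in your $c\neq0$ case when $a=0$. You argue that $(g\cdot(iy))/i$ approaches $-i(a/c)$ from within the right half plane and that therefore its argument tends to $-\tfrac{\pi}{2}\sgn(a/c)$. When $a=0$, that limit point is $0$, and knowing a sequence in $\mathcal{H}$ tends to $0$ gives you no control on its argument; the conclusion happens to be correct but the stated reasoning does not deliver it. (The paper deals with $c>0,a=0$ as a separate explicit case for exactly this reason.) The cleanest repair, which also subsumes your two cases in one, is to observe that $Y(y)>0$ and
\[
\frac{X(y)}{Y(y)}=\frac{acy^2+bd}{y}=acy+\frac{bd}{y}\xrightarrow[y\to\infty]{}
\begin{cases}
+\infty,& ac>0,\\
0,& ac=0,\\
-\infty,& ac<0,
\end{cases}
\]
so that $\arg\bigl(Y(y)-iX(y)\bigr)=\arctan\bigl(-X(y)/Y(y)\bigr)\to -\tfrac{\pi}{2}\sgn(ac)$ in all cases; dividing by two and exponentiating then yields \eqref{eq:argument-formula}.
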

We defer the proof  of Lemma \ref{lem:argument-formula} to the end of this section. Writing
\[
\rho =\frac{\rho}{|\rho|}= \lim_{y \rightarrow \infty}{ \frac{\rho((iy, \dots, iy))}{|\rho((iy, \dots, iy))|}}
\]
and applying formula \eqref{eq:argument-formula} (and using the fact that the $d_j$ are 
integers\footnote{We arrived at a minor conflict of notation: There are dimensions $d_j \in \N$, $j \in \{1, \dots, n \}$ and elements $d_{r} \in \Oo_K$, $r \in \Z/8 \Z$, the entries of the right columns of the elements $\gamma_r$. The $d_r\in \Oo_K$ won't play a role in the remaining argument.})
, we see that
\begin{equation}\label{eq:formula-for-rho}
\rho = \prod_{r \in \Z/8 \Z}{\prod_{j=1}^{n}{e \left(-\tfrac{d_j}{8}\sgn(\sigma_j(c_r c_{r-1})) \right)}},
\end{equation}
where we recall that $c_r$ denotes the lower left entry of $\gamma_r$ and $c_{r-1}$ the upper left entry of $\gamma_r$. Let us write down the eight products $c_{r} c_{r-1}$ appearing in \eqref{eq:formula-for-rho}. For $\alpha_1, \alpha_2 \in K$, we write $\alpha_1 \equiv \alpha_2$ to express that there is a totally positive $\beta \in K^{\times}$ so that $\alpha_2= \alpha_1 \beta$. Then, by assumption \eqref{eq:positivity-assumption},
\begin{align*}
&c_0 c_{7} = 0  &&c_1 c_0 = 0 \\
&c_2 c_1 =-2 \equiv -1 &&c_3 c_2 =  2\frac{1-4b}{1+4a}  \equiv 1-4b   \\
&c_4 c_3 =\frac{1-4b}{1+4a} (2y)  \equiv (1-4b)y &&c_5 c_4 =  -(2 y)(1 +4x) \equiv -y \\
&c_6 c_5 =-2x (1 +4x)\equiv -x &&c_7 c_6 = 2x \equiv x.
\end{align*}
 We introduce the short hands
\[
\eta_j := \sgn(1-4 \sigma_j(b)) , \qquad \xi_j = \sgn(\sigma_j(y)).
\]
Interchanging the order of multiplication in \eqref{eq:formula-for-rho},  using the above list of identities and noting that $c_0 c_7, c_1 c_0$ don't contribute, while the contributions of  $c_6 c_5$ and $c_7 c_6$ cancel, we arrive at the formula
\[
\rho= e(-\tfrac{1}{8} \Sigma), \quad \text{where } \quad  \Sigma= \sum_{j=1}^n{d_j(-1 + \eta_j + \xi_j \eta_j -\xi_j)}= \sum_{j=1}^n{d_j(\eta_j-1)(\xi_j + 1)}.
\]
We claim that for each $j$ we have $(\eta_j-1)(\xi_j + 1) = 0$, or equivalently
\begin{equation}\label{eq:either-or-inequality}
1-4 \sigma_j(b)>0 \qquad \text{ or } \qquad \sigma_j(y) < 0.
\end{equation}
By \eqref{eq:key-equation}, we have $(1-3b)(1-3y)=1$ and hence
\[
(1-3\sigma_j(b))(1-3\sigma_j(y))=1.
\]
By assumption \eqref{eq:positivity-assumption}, both factors in this product are positive. Assume now that $\sigma_j(y) >0$. Then the factor $(1-3\sigma_j(y))$ belongs to the interval $(0,1)$, implying that the factor $(1-3\sigma_j(b))$ belongs to the interval $(1, \infty)$ and so $-\sigma_j(b) > 0$. But $-\sigma_j(b) > 0$ implies $ 1-4\sigma_j(b) > 1>0$. We assumed that $\sigma_j(y) >0$ and deduced $1-4\sigma_j(b)>0$, which proves 
\eqref{eq:either-or-inequality}. This finishes the proof of $\rho =1$, hence the 
proof of the claim made in \eqref{eq:recursion-condition-lambda_r} and thus the proof 
of Theorem \ref{thm:main-thm-spheres}. It only remains to prove Lemma 
\ref{lem:argument-formula}.
\begin{proof}[Proof of Lemma \ref{lem:argument-formula}]
We need to show that for all $g = \begin{psmallmatrix}
a & b\\
 c & d
\end{psmallmatrix} \in \SL_2(\R)$, we have
\begin{equation}\label{eq:argument-formula-equivalent}
\lim_{y \rightarrow \infty}{  \arg_{(- \pi/4, \pi/4)}{\left[((g  \cdot (iy))/i)^{1/2}\right]}} = - \tfrac{\pi }{4} \sgn( a c).
\end{equation}
Both sides of \eqref{eq:argument-formula-equivalent} are unchanged if we replace $g$ 
by $-g$, so we may assume $c \geq 0$ for the verification.  For $y >0, $ we abbreviate \[
w(y) := (g \cdot (iy))/i = \frac{aiy + b}{-cy + id} \in \mathcal{H} := \{ w \in \C \,:\, \real(w) >0 \}.
\]
In this proof, any asymptotic notation refers to taking $y \rightarrow \infty$. If $c = 0$,  then $ad = 1$ and we have
\[
w(y) =  \frac{a (iy) + b}{id}  = \frac{a^2 (iy) + ab}{i}  = a^2 y -i ab, \quad 
\text{hence} \quad \frac{\imag(w(y))}{\real(w(y))} = \frac{-b}{a y} \longrightarrow 0,
\]
which shows that that argument of $w(y)$ and hence that of $w(y)^{1/2}$, goes to zero, as claimed. If $c > 0$ and $a  = 0$, then $-bc = 1$ and we have
\[
w(y) = \frac{b}{-cy + di } = \frac{b^2}{y + db i} = \frac{b^2 y}{y^2 + (db)^2} - \frac{b^2 (db)i}{y^2 + (db)^2}, \quad \text{hence} \quad \frac{\imag(w(y))}{\real(w(y))} = \frac{-db}{ y} \longrightarrow 0,
\]
as claimed. Assume now that $c >0$ and that $a \neq 0$. Then
\[
w(y) =  \frac{1}{i} \left( \frac{a}{c}- \frac{1}{c ( c(iy) + d)} \right) = (-i)(a/c) + o(1).
\]
We deduce that
\begin{itemize}
\item if $a>0$, then $\arg(w(y)) \rightarrow -\pi/2$, hence $\arg( w(y)^{1/2}) \rightarrow - \pi/4$, as claimed.
\item if $a<0$, then $\arg(w(y)) \rightarrow \pi/2$, hence $\arg( w(y)^{1/2}) \rightarrow \pi/4$, as claimed.
\end{itemize}
This finishes the proof of \eqref{eq:argument-formula-equivalent} and thus the proof of Lemma \ref{lem:argument-formula}.
\end{proof}

\section{Group theoretic obstructions to 
interpolation}\label{sec:algebraic-obstructions}
In this section, we generalize the setting we have been studying so far in the following way. We replace the (embedded) codifferent $\sigma(\Oo_K^{\vee}) \subset \R^n$ of a totally real field $K$ and its square root $\sqrt{\sigma(\Oo_K^{\vee})} \subset \R^n$ by a general lattice $\Lambda \subset \R^n$ and its square root
\[
\sqrt{\Lambda}:= \{ (x_1, \dots,x_n) \in \R^n \,:\, (x_1^2, \dots, x_n^2) \in \Lambda \}.
\]
To motivate looking at possible Fourier uniqueness or non-uniqueness sets of this 
shape, we give below in \S \ref{sec:set-up-with-lattices} a translation of a general 
Fourier interpolation problem with uniqueness pairs of the form $(\sqrt{\Lambda_1}, 
\sqrt{\Lambda_2})$, to the problem of finding certain holomorphic functions on $\H^n$ 
having modular transformation behavior with respect to a certain subgroup 
$\Gamma(L_1, L_2) \leq \PSL_2(\R)^n$, where $L_i = 2\Lambda_i^{\vee}$. Ideally, we 
would want this group to be discrete and at the same time isomorphic to the free 
product $L_1 \ast L_2$. In Proposition \ref{prop:non-existence-of-good-lattices} 
below we show that, for $n \geq 2$, this can never happen. The results 
of~\S\ref{sec:algebraic-obstructions} will not be used elsewhere in this paper, but 
may be of independent interest and provide further context and motivation. 
\subsection{Generating series and functional equations}\label{sec:set-up-with-lattices}
Adopt the general setting of \S \ref{subsec:multi-radial-case}. Thus,  $n,d,d_1, \dots, d_n \geq 1$ are integers and $d = d_1 + \dots + d_n$. Fix two lattices $\Lambda_1, \Lambda_2 \subset \R^n$ and for $i = 1,2$, define $\Lambda_{i, +}:= \Lambda_i \cap [0, \infty)^n$. We want to know whether there exist functions $a_{\lambda}, \tilde{a}_{\mu}: \R^n \rightarrow \C$ such that for all $f \in \Ss(\R^d)^{\Or(d_1) \times \dots  \times \Or(d_n)}$ and all $x = (x_1,
\dots, x_n) \in \R^d$,
\begin{equation}\label{eq:interpolation-formula-general}
f(x_1, \dots, x_n) = \sum_{\lambda \in \Lambda_{1,+}}{a_{\lambda}(|x_1|, \dots, |x_n|) 
f(\sqrt{\lambda}}) + \sum_{\mu \in \Lambda_{2,+}}{{\tilde{a}}_{\mu}(|x_1|, 
\dots, |x_n|) \widehat{f}(\sqrt{\mu}}),
\end{equation}
where we used the notation $\sqrt{\lambda}:= ( \sqrt{\lambda_1}, \dots, \sqrt{\lambda_n})$ for $\lambda = (\lambda_1, \dots, \lambda_n)$.

Let us first \emph{assume} that such functions $a_{\lambda}, \tilde{a}_{\mu}$ exist and that, for each fixed $r = (r_1, \dots, r_n) \in [0, +\infty)^n$, they grow at most polynomially in their index parameters $\lambda \in \Lambda_1$ and $\mu \in \Lambda_2$ respectively. We consider the generating functions
\begin{equation}\label{eq:def-generating-functions}
F(z,r) = \sum_{\lambda \in \Lambda_{+,1}}{a_{\lambda}(r) e^{\pi i \sum_{i=1}^{n}{z_i 
\lambda_i}}}, \quad  \tilde{F}(z,r) = \sum_{\mu \in 
\Lambda_{2,+}}{{\tilde{a}}_{\mu}(r) e^{\pi i \sum_{i=1}^{n}{ z_i \lambda_i}}}, \quad z \in \H^n.
\end{equation}
By construction, each of these functions is holomorphic in $z$ and periodic with respect to 
 the lattices $2\Lambda_1^{\vee}$ or $2\Lambda_2^{\vee}$ respectively. Moreover, applying the formula \eqref{eq:interpolation-formula-general} to the Gaussian $f = g(z)$, as defined in \eqref{eq:def-gaussian}, shows that
\begin{equation}\label{eq:functional-equation-S}
g(z,r) = F(z,r) + ( z_1/i)^{-d_1/2} \cdots ( z_n/i)^{-d_n/2} \tilde{F}(-1/z,r).
\end{equation}
Conversely, no longer assuming the existence of $a_{\lambda}, \tilde{a}_{\mu}$ but the existence of holomorphic $2 \Lambda_1^{\vee}$-periodic functions $z \mapsto F( z, r)$ and holomorphic $2 \Lambda_2^{\vee}$-periodic  functions $z \mapsto \tilde{F}(z,r)$ satisfying suitable growth conditions, which are related via the functional equations \eqref{eq:functional-equation-S} and with Fourier expansions indexed over $\Lambda_{i, +}$ only (instead of the whole $\Lambda_{i}$), we can deduce an interpolation formula \eqref{eq:interpolation-formula-general} by making use of the following Proposition. 
\begin{proposition}\label{prop:density-of-Gaussians}
The linear span of all Gaussians $g(z)$, $z \in \H^n$ is dense in $\Ss(\R^d)^{\Or(d_1)
\times \dots \times \Or(d_n)}$
\end{proposition}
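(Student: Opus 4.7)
The plan is to establish the density statement by a Hahn--Banach duality argument. It suffices to prove that any continuous linear functional $T$ on $V := \Ss(\R^d)^{\Or(d_1) \times \cdots \times \Or(d_n)}$ which vanishes on every Gaussian $g(z)$, $z \in \H^n$, must be identically zero.

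The first step is to verify that the map $z \mapsto g(z)$ is holomorphic from $\H^n$ into $V$ equipped with its Schwartz topology. This is a routine estimate: on any compact $K \subset \H^n$, the imaginary parts of the $z_j$ are bounded below by a positive constant, so the difference quotients $\bigl(g(z + h e_j) - g(z)\bigr)/h$ converge to $\pi i |x_j|^2 g(z)$ in every Schwartz seminorm, uniformly in $z \in K$. Consequently, if $T$ annihilates every $g(z)$, the holomorphic function $z \mapsto T(g(z))$ vanishes identically, and differentiating through $T$ (legitimate by continuity of $T$ and by the holomorphy just established) yields
\begin{equation*}
T\bigl(|x_1|^{2\alpha_1} \cdots |x_n|^{2\alpha_n}\, g(z)\bigr) = 0
\end{equation*}
for every multi-index $\alpha \in \N_0^n$ and every $z \in \H^n$. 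Specializing $z = (i, \dots, i)$ gives $T\bigl(|x_1|^{2\alpha_1} \cdots |x_n|^{2\alpha_n} e^{-\pi \sum_j |x_j|^2}\bigr) = 0$ for all $\alpha$.

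The second step is to show that these Gaussian--polynomial functions span a dense subspace of $V$. Using the classical fact that a radial Schwartz function on $\R^{d_j}$ corresponds to an even Schwartz function of $r_j = |x_j|$, the space $V$ is isomorphic to the closed subspace $W \subset \Ss(\R^n)$ of functions even in each of the $n$ variables $r_1, \dots, r_n$. Under this isomorphism, our family is identified with $\{r_1^{2\alpha_1} \cdots r_n^{2\alpha_n} e^{-\pi \sum_j r_j^2}\}_{\alpha \in \N_0^n}$. Since the linear span of the Hermite functions $H_k(r) e^{-\pi r^2}$ is dense in $\Ss(\R)$, restricting to even parity shows that the span of $\{r^{2k} e^{-\pi r^2}\}_{k \ge 0}$ is dense in $\Ss_{\mathrm{even}}(\R)$, and taking $n$-fold tensor products yields density in $W$. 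Combined with the previous paragraph, $T$ vanishes on a dense subspace of $V$, hence $T = 0$, as required.

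The main technical point is the verification of holomorphy of $z \mapsto g(z)$ in the Schwartz topology and the passage of derivatives inside $T$; the remaining ingredients are standard density results for Hermite expansions together with the classical identification of radial with even Schwartz functions.
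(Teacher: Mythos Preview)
Your argument is correct and follows a genuinely different route from the paper's proof. The paper approximates a given $f\in C_c^{\infty}(\R^d)^{H}$ constructively: it uses G.~Schwarz's theorem on smooth invariants to write $f(x)e^{\pi\sum b_j|x_j|^2}=\eta(|x_1|^2,\dots,|x_n|^2)$ for some $\eta\in C_c^\infty(\R^n)$, then applies Fourier inversion on $\R^n$ to express $f$ (up to a controllable error) as a compactly supported integral of Gaussians $g_{(ib_1+2\xi_1,\dots,ib_n+2\xi_n)}$, which lies in the closure of the span by Fr\'echet-space integration. Your approach instead passes through duality: holomorphy of $z\mapsto g(z)$ lets you differentiate through any annihilating functional to kill all monomials $|x|^{2\alpha}e^{-\pi|x|^2}$, and density of the latter is handled via the Hermite basis and the radial--even identification. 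Both proofs ultimately invoke some form of ``smooth $H$-invariant functions are smooth in the basic invariants $|x_j|^2$'': the paper cites Schwarz's theorem directly, while you use the special case $\Ss_{\mathrm{rad}}(\R^{d_j})\cong\Ss_{\mathrm{even}}(\R)$. Your argument is cleaner and more conceptual; the paper's is more explicit and avoids Hahn--Banach. One small point worth making explicit in your write-up: the isomorphism $V\cong W$ must be topological for density to transfer, which follows from the open mapping theorem once the bijection and continuity in one direction are established (alternatively, one can bypass $W$ entirely by averaging a polynomial-times-Gaussian approximant over $H$ and invoking polynomial invariant theory for $\prod_j\Or(d_j)$).
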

\begin{proof}
We defer this proof to the Appendix  \ref{sec:appendix-density-gaussians}, as  we will not need it for $n \geq 2$, but it seems worth recording. For $n = 1$, this is also contained in \cite[Lemma 2.2]{CKMRV}. 
\end{proof}

\subsection{Group theoretic and modular considerations}\label{subsec:group-theory}
The modular transformation properties of the generating functions $\tilde{F}$ and $F$ 
defined above are governed by a certain  subgroup of $\PSL_2(\R)^n$ acting on $\H^n$, 
depending upon the lattices $\Lambda_1, \Lambda_2 \subset \R^n$ (but not on the 
dimensions $d_j$) which we define next. In the notation of \S \ref{sec:pf-thm-2}, 
this subgroup can be thought of as the analogue of the subgroup of $\PSL_2(\Oo_K)$ 
generated by all elements $T^{2 \beta}, ST^{2 \beta}S$, $\beta \in \Oo_K$ in the case 
where $\Lambda_1 = \Lambda_2 = \sigma(\Oo_K^{\vee})$.   

Instead of working with $\PSL_2(\R)^n$, we find it more convenient to work with the isomorphic group $G:=\PSL_2(\R^n)$, where $\R^n = \R \times \cdots \times \R$ is viewed as commutative ring with component wise addition and multiplication.  For $x \in \R^n$ we define
\begin{equation}\label{eq:definition-Tx-Vx}
T^x := \begin{pmatrix}
1 &x\\
0 & 1
\end{pmatrix}, \qquad V^{x}:= \begin{pmatrix}
1 & 0\\
x & 1
\end{pmatrix} \in G,
\end{equation}
where $0 =(0, \dots, 0)$, $1 = (1, \dots, 1)$. We also define the element $S:= 
\begin{psmallmatrix} 0 & -1\\ 1 & 0 \end{psmallmatrix} \in G$, so that $ST^x S = 
V^{-x}$. For any lattice $L \subset \R^n$, we define the following subgroups 
of $G$:
\[
\Gamma_{\text{upp}}(L) := \{ T^{ x} \,:\, x \in L \} \cong L, \qquad \Gamma_{\text{low}}(L) := \{ V^{y} \,:\, y \in L \} \cong L
\]
and then, for any two lattices $L_1, L_2 \subset \R^n$, we also define the subgroup
\[
\Gamma(L_1, L_2) := \langle \Gamma_{\text{upp}}(L_1) \cup \Gamma_{\text{low}}(L_2) \rangle \leq G.
\]
The subgroup relevant to the setting described in \S \ref{sec:set-up-with-lattices} 
is then $\Gamma(L_1,L_2 )$, where $L_i = 2 \Lambda_i^{\vee} $. To explain this, let 
us suppose that we are given a cocycle $J: G \rightarrow \Hol(\H^n,\C^{\times})$ 
satisfying $J(T^{x}) = 1$ for all $x \in \R^n$ and $J(S)(z) = 
\prod_{j=1}^{n}{(z_j/i)^{d_j/2}}$. We may then define a slash action of $G$ (and its 
group algebra $\C[G]$) on functions $f$ on $\H^n$ by  $f| \gamma := J(\gamma)^{-1} 
\cdot (f \circ \gamma)$, $\gamma \in G$, similarly to \S \ref{sec:pf-thm-2}. 

In practice, it suffices that $J$ can be defined only on the subgroup generated by $\Gamma(L_1, 
L_2)$ and $S$,  but its existence is non-trivial and may not always be guaranteed, compare with \S \ref{sec:pf-thm-2}. On the other hand, when $8|d_j$ for all 
$j$, such a cocycle $J$ can be defined on the full group $G$, namely, we define
$J_{G;d_1,\dots,d_n}(g)=\prod_{j=1}^{n}(g_j')^{-d_j/4}$, where $g=(g_1,\dots,g_n)$, 
and $g_j'$
is the derivative of the M\"obius transformation~$g_j$.

Now consider the functions $F$, $\tilde{F}$ introduced in \S 
\ref{sec:set-up-with-lattices}. In what follows we will suppress the parameters 
$r \in [0, \infty)^n$ and $z \in \H^n$ from the notation. Using the slash action just 
introduced, $F$ and $\tilde{F}$ (as functions on $\H^n$) must satisfy, besides 
certain growth conditions,
\[
F|(T^{x}-1) = 0 \text{ for all }x \in L_1, \qquad \tilde{F}|(T^{y}-1) = 0 \text{ for all }y \in L_2, \qquad F + \tilde{F}|S = g,
\]
where $g$ is the Gaussian \eqref{eq:def-gaussian}. It suffices to find only $F$ such that
\[
F|(T^{x}-1) = 0 \text{ for all }x \in L_1, \qquad F|(V^{y}-1) = g|(V^{y}-1) \text{ for all }y \in L_2.
\]
Indeed, we can then define $\tilde{F}$ as $\tilde{F}=g|S-F|S$ and this function will be $L_2$-periodic. 


We  see from the above  cohomological  formalism that any relation between elements in the group $\Gamma(L_1,L_2)$ imposes a condition on the $1$-cocycle $ \Phi: \gamma \mapsto F|(\gamma-1)$. There are trivial relations that come from the fee abelian subgroups $\Gamma_{\text{upp}}(L_1)$ and $\Gamma_{\text{low}}(L_2)$ that are always respected. There  is, however, no reasons why a ``mixed" relation between elements of these two groups should hold, as  such a relation translates to non-trivial conditions for the Gaussian $g$. Thus, one would like that
\begin{equation}\label{eq:condition-free}
 \Gamma(L_1, L_2) \text{ \emph{is the free inner product of} } \Gamma_{\text{upp}}(L_1)  \text{ \emph{and}  }\Gamma_{\text{low}}(L_2). \tag{F}
\end{equation}
A natural further desideratum is:
\begin{equation}\label{eq:condition-discrete}
 \Gamma(L_1, L_2) \text{ \emph{is discrete in} } G  \cong \PSL_2(\R)^n. \tag{D}
\end{equation}
In fact, the existence of $F$ and $\tilde{F}$ with the above transformation 
properties \emph{implies} \eqref{eq:condition-free} by the following proposition. 
\begin{proposition}\label{prop:necessity-of-F}
    Assume that there exist functions $F$ and $\tilde{F}$ as 
    in~\eqref{eq:def-generating-functions}
    satisfying~\eqref{eq:functional-equation-S}. Then     
    condition~\eqref{eq:condition-free} holds.
\end{proposition}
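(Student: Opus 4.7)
The plan is to argue by contradiction. Suppose that \eqref{eq:condition-free} fails, so that some non-trivial reduced alternating word $w = g_1 g_2 \cdots g_k$ in $\Gamma_{\text{upp}}(L_1) \cup \Gamma_{\text{low}}(L_2)$ (each $g_i \neq 1$, with consecutive $g_i$ in different subgroups) equals $1$ in $G$. Choose such a $w$ of minimal length $k \geq 2$, and set $\gamma_j := g_j g_{j+1} \cdots g_k$ for $1 \leq j \leq k+1$, so that $\gamma_1 = w = 1 = \gamma_{k+1}$. Minimality forces $\gamma_j \neq 1$ for $2 \leq j \leq k$ and $\gamma_j \neq \gamma_{j'}$ for $2 \leq j < j' \leq k$, since any such coincidence would produce a strictly shorter relation $g_j g_{j+1}\cdots g_{j'-1} = 1$.

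The next step is to encode the hypothesis on $F$ and $\tilde{F}$ as a cocycle identity. The $L_1$-periodicity of $F$ reads $F|(T^x - 1) = 0$ for $x \in L_1$; and combining the functional equation \eqref{eq:functional-equation-S} with the $L_2$-periodicity of $\tilde{F}$ and the identity $SV^y = T^{-y}S$ yields $\tilde{F}|SV^y = (\tilde{F}|T^{-y})|S = \tilde{F}|S$ for $y \in L_2$, whence $F|(V^y - 1) = g|(V^y - 1)$. Define $\Phi(\gamma) := F|(\gamma - 1)$; this is a $1$-cocycle, i.e.\ $\Phi(\gamma\gamma') = \Phi(\gamma)|\gamma' + \Phi(\gamma')$. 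Applying it to $w$ and using $\Phi(T^x) = 0$ to drop the upper-triangular terms,
\begin{equation*}
0 \,=\, \Phi(w) \,=\, \sum_{i=1}^{k} \Phi(g_i)\bigl|(g_{i+1}\cdots g_k) \,=\, \sum_{i \in I}\bigl(g|\gamma_i - g|\gamma_{i+1}\bigr), \qquad I := \{\, i : g_i \in \Gamma_{\text{low}}(L_2) \,\}.
\end{equation*}

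Regrouping this telescoping sum, the coefficient of $g|\gamma_j$ equals $[j \in I] - [j-1 \in I]$, with the convention $0, k+1 \notin I$. Because the $g_i$ alternate, exactly one of $j-1, j$ lies in $I$ for every $2 \leq j \leq k$, so each of the distinct elements $\gamma_2, \ldots, \gamma_k$ appears with coefficient $\varepsilon_j \in \{\pm 1\}$, and the identity takes the form
\begin{equation*}
c \cdot g \,+\, \sum_{j=2}^{k} \varepsilon_j \, g|\gamma_j \,=\, 0, \qquad c \in \{-1,0,1\}.
\end{equation*}

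To finish, I specialise to a point $z \in \H^n$ that is not fixed by any of the finitely many non-identity elements $\gamma_j \gamma_{j'}^{-1}$; such $z$ exist since each such fixed set is a proper real-analytic subvariety of $\H^n$. Then $z, \gamma_2 z, \ldots, \gamma_k z$ are pairwise distinct points of $\C^n$, and $(g|\gamma_j)(z, \cdot)$ is a nonzero scalar multiple of the Gaussian $r \mapsto \prod_{\ell}e^{\pi i (\gamma_j z)_\ell |r_\ell|^2}$, with pairwise distinct exponent vectors in $\C^n$. Lemma~\ref{lem:linear-independence} then forces every coefficient in the displayed relation to vanish, contradicting $\varepsilon_2 = \pm 1 \neq 0$. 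The main delicate point of the plan is the bookkeeping of the telescoping sum together with the use of minimality to guarantee distinctness of the suffixes $\gamma_j$; once these combinatorial facts are in place, Lemma~\ref{lem:linear-independence} delivers the contradiction immediately.
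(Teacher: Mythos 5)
Your proof is correct and takes essentially the same route as the paper's: both introduce the cocycle $\Phi(\gamma)=F|(\gamma-1)$, expand it along a minimal relation in $\Gamma(L_1,L_2)$, use minimality to show the resulting Gaussians are attached to pairwise distinct group elements, and then invoke Lemma~\ref{lem:linear-independence} at a point $z$ avoiding finitely many fixed-point sets. The only cosmetic difference is that the paper first conjugates the minimal relation into the normal form $V^{y_1}T^{x_1}\cdots V^{y_m}T^{x_m}=1$ with all $x_i,y_i\neq0$ (which makes the telescoping bookkeeping immediate: the $2m$ elements $P_i, V^{y_i}P_i$ are automatically pairwise distinct), whereas you work directly with a general minimal alternating word and track the boundary contribution $c$ by hand; your claim $c\in\{-1,0,1\}$ could in fact be sharpened to $c=\pm1$ since minimality forces the word length $k$ to be even, but this is irrelevant because the contradiction already comes from $\varepsilon_2\neq0$. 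Like the paper's main proof, your argument implicitly assumes the automorphy factor $J$ is well-defined on $\Gamma(L_1,L_2)$; the paper addresses this in the subsequent remark, and your argument would need the same fix in that degenerate situation.
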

\begin{proof}
By way of contradiction, assume that \eqref{eq:condition-free} fails and consider a 
non-trivial relation
\[
V^{y_1}T^{x_1}V^{y_2}T^{x_2} \cdots V^{y_m}T^{x_m} = 1
\]
with $m \geq 1$ minimal and with $x_1, \dots, x_m \in L_1$, $y_1, \dots, y_m \in 
L_2$, all nonzero (by conjugation with some $T^x$ or $V^y$ if necessary, we can bring 
any minimal non-trivial  relation into the above form). Consider 
the cocycle $\Phi(\gamma) = F|(\gamma-1)$ as above and apply the cocycle property $\Phi(\gamma_1 \gamma_2) = \Phi(\gamma_1)|\gamma_2 + \Phi(\gamma_2)$ repeatedly, to 
obtain
\begin{equation}
\label{eq:repeatedly-using-cocycle-property}
0 = \Phi(1) = \sum_{i=1}^{m}{\Phi(V^{y_i})|P_i} = \sum_{i=1}^{m}{(g|V^{y_i}P_i - g|P_i)} , \quad P_i := T^{x_i}V^{y_{i+1}}\cdots V^{y_m}T^{x_m}.
\end{equation}
Since $x_m \neq 0$, all $2m$ group elements $V^{y_i}P_i, P_i$ are pairwise distinct 
by minimality of $m$. Thus, we have an identity $0 = \sum_{j=1}^{2m}{\delta_j 
J(\gamma_j)^{-1}g(\gamma_j)}$ with $\delta_j \in \{ \pm 1 \}$ and 
with pairwise distinct $\gamma_j \in \Gamma(L_1, L_2)$. We obtain the 
desired contradiction by specializing this identity to some point $z \in \H^n$, which 
is not fixed by any $\gamma_i \gamma_j^{-1}$ for $i \neq j$ and invoking
Lemma~\ref{lem:linear-independence}.
\end{proof}

\begin{remark}
In the above proof we assumed the existence of the automorphy factor $J$ defined on 
the group~$\Gamma(L_1, L_2)$. When $J$ is not well-defined we can 
modify the argument to obtain the same conclusion in the following way. 
Consider the 
abstract free product $\widetilde{\Gamma} = \widetilde{\Gamma}(L_1, L_2) = 
\Gamma_{\text{upp}}(L_1)*\Gamma_{\text{low}}(L_2)$ and define 
$\widetilde{J}\colon\widetilde{\Gamma}\to \Hol(\H^n,\C^{\times})$ by
    \[
    \widetilde{J}(T^{x})(z) = 1, \qquad\qquad 
    \widetilde{J}(V^{y})(z) = \mu(T^{-y}Sz)\mu(z),
    \]
    for $x \in L_1$, $y \in L_2$, where, as in \S \ref{sec:pf-of-thm-1}, $\mu(z) = \prod_{j=1}^{n}{(z_j/i)^{d_j/2}}$. Since $\mu(z) \mu(Sz) = 1$, the cocycle $\tilde{J}$ is well-defined on $\Gamma_{\text{low}}(L_2)$, hence on all of $\tilde{\Gamma}$. Let $\pi$ denote the natural homomorphism from $\widetilde{\Gamma}$ 
onto $\Gamma(L_1, L_2)$. We may then define a right action of 
$\widetilde{\Gamma}$ on functions $f\colon\H^n\to\C$ by $f|\gamma := 
\widetilde{J}(\gamma)^{-1}(f\circ \pi(\gamma))$. 
We define a $\widetilde{\Gamma}$-cocycle $\widetilde{\Phi}$ by 
$\widetilde{\Phi}(\gamma):=F|(\gamma-1)$. 
Since $J_{G;8d_1,\dots,8d_n}\circ 
\pi$ agrees with $\widetilde{J}^8$ on the generators of~$\widetilde{\Gamma}$
we see that for all $\gamma \in \ker(\pi)$, the function $\widetilde{J}(\gamma)$ is constant and equal to some 8-th root of unity.
Then, instead of~\eqref{eq:repeatedly-using-cocycle-property}, we obtain
    \begin{equation*}
    (\widetilde{J}(R)^{-1}-1)F = \widetilde{\Phi}(R) = 
    \sum_{i=1}^{m}{\widetilde{\Phi}(V^{y_i})|P_i} = 
    \sum_{i=1}^{m}{(g|V^{y_i}P_i - g|P_i)}\,,
    \end{equation*}
where $R=V^{y_1}T^{x_1}V^{y_2}T^{x_2} \cdots V^{y_m}T^{x_m}$ is an element 
in $\ker(\pi)\subset\widetilde{\Gamma}$ with  $m \geq 1$ minimal and $P_i$ as in 
\eqref{eq:repeatedly-using-cocycle-property}. Since $F$ is $L_1$-periodic, by acting 
on 
both sides of the above equation by $T^{x}-1$ for a suitable~$x\in L_1$ (so that the 
resulting linear combination of Gaussians on the right-hand side involves $4m$  distinct elements) and again invoking Lemma~\ref{lem:linear-independence} 
for suitable $z\in\H^n$ (not fixed by any element in a finite set of non-trivial group elements) we arrive at the desired contradiction.
\end{remark}

Thus, condition~\eqref{eq:condition-free} is necessary for the existence of~$F$ 
and~$\tilde{F}$. Regarding condition~\eqref{eq:condition-discrete} we don't have a rigorous justification for its necessity. However, one can show that if \eqref{eq:condition-discrete} fails, then $\Gamma(L_1,L_2)$
contains many elliptic elements of infinite order (here we call $\gamma\in\PSL_2(\R)^n$ elliptic if each component is either an elliptic element in $\PSL_2(\R)$ or identity), and any such element $\gamma$ imposes rather strong conditions on~$\Phi(\gamma)$ and~$F$ (e.g., if the closure of $\langle\gamma\rangle$ is a maximal compact subgroup of $\PSL_2(\R)^n$, $F$ is uniquely defined by the relation $\Phi(\gamma)=F|(\gamma-1)$). Thus it seems plausible that~\eqref{eq:condition-discrete} is also necessary for the existence of $F$ and $\tilde{F}$.

Before stating the next result, let us return to the examples coming from totally real number fields. As 
already  mentioned, in the notation of \S \ref{sec:pf-thm-2},  for a 
totally real number field $K/ \Q$ of degree $n \geq 2$ and $L_1 = L_2 =  
\sigma(2\Oo_K)$, the group $\Gamma(L_1, L_2)$ is the subgroup of $\PSL_2(\Oo_K)$ generated by all elements $T^{2\beta}, V^{2 \beta}$, 
$\beta \in \Oo_K$. As such, it is well known to be a discrete subgroup of 
$G$, so \eqref{eq:condition-discrete} holds. On the other hand, the condition 
\eqref{eq:condition-free} \emph{never} holds in this case. 
%
%
To give a concrete example, if $0 \neq \beta \in \Oo_K$ is such that $1 + 5\beta \in \Oo_K^{\times}$, then
\begin{equation}
\label{eq:general-relation}
T^{-2\beta(1 + 5\beta)^{-1}}V^2 T^2 V^{2\beta}T^{-2(1 + 5\beta)^{-1}}V^{-2(1 + 5\beta)} = 1.
\end{equation}

Returning to general lattices, for  $n \geq 2$, there are unfortunately no examples of lattices $L_1, L_2 
\subset \R^n$ for which both conditions \eqref{eq:condition-discrete}, 
\eqref{eq:condition-free} hold, as the following proposition shows.

\begin{proposition}\label{prop:non-existence-of-good-lattices}
Let $n \geq 2$ and let $L_1,L_2 \subset \R^n$ be arbitrary lattices. Suppose the group $\Gamma(L_1, L_2) \leq G$ is discrete. Then \eqref{eq:condition-free} does not hold. 
\end{proposition}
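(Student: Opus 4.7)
\emph{Approach.} My plan is to argue by contradiction: assume both (D) and (F), and derive a contradiction by first promoting $\Gamma:=\Gamma(L_1,L_2)$ to an irreducible lattice in $G\cong\PSL_2(\R)^n$ and then applying Margulis' normal subgroup theorem, contradicting the infinite abelianization that (F) would force on $\Gamma$.

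\emph{Finite covolume (the main obstacle).} The subgroups $\Gamma_{\mathrm{upp}}(L_1)$ and $\Gamma_{\mathrm{low}}(L_2)$ are cocompact lattices in the opposite unipotent subgroups $U^{\pm}\cong\R^n$ of $G$. The key technical step, which I expect to be the most involved, is to combine this with (D) and the higher-rank structure ($n\geq 2$) to conclude that $\Gamma$ has finite Haar covolume in $G$. For $n=1$ this would fail (Schottky free groups are counterexamples), so the argument has to use $n\geq 2$ in an essential way. The most promising route is an arithmetic identification: the trace of $T^xV^y$ in the $j$-th factor equals $2+x_jy_j$, so as $x\in L_1$ and $y\in L_2$ vary, the discreteness of the products and their commutators should impose algebraic constraints on the entries of $L_1$ and $L_2$, which one expects to show are Galois conjugates of elements in a totally real number field $K$ of degree $n$. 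This places $\Gamma$ up to commensurability inside a Hilbert modular group $\PSL_2(\Oo_K)$, which is a well-known lattice in $G$.

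\emph{Irreducibility.} Once $\Gamma$ is known to be a lattice, I would verify it is irreducible as a lattice in $G$. Suppose, aiming at contradiction, that it is not. Then after passing to a finite-index subgroup $\Gamma$ splits as a direct product along a nontrivial partition of the $n$ simple factors of $G$. Under (F), every upper-triangular element of $\Gamma=L_1*L_2$ is a single $L_1$-letter (any alternating reduced word of length $\geq 2$ has a nonzero lower-left block in at least one factor), hence $\Gamma\cap U^+=\Gamma_{\mathrm{upp}}(L_1)$. Combined with the assumed splitting this forces $L_1$, up to finite index, to decompose as a direct sum $L_{1,1}\oplus L_{1,2}$ along the same partition, and similarly for $L_2$. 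Picking nonzero $x\in L_{1,1}$ and nonzero $y\in L_{2,2}$, the elements $T^x$ and $V^y$ act on disjoint subproducts of $G$ and therefore commute in $\Gamma$; in the free product $L_1*L_2$, by contrast, nonzero elements lying in different free factors cannot commute (by the Kurosh subgroup theorem every abelian subgroup is conjugate into a single factor). This contradicts (F), so $\Gamma$ must be irreducible.

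\emph{Contradiction via NST.} With $\Gamma$ an irreducible lattice in the real-rank-$n$ semisimple group $G$ and $n\geq 2$, Margulis' normal subgroup theorem asserts that every normal subgroup of $\Gamma$ is either finite or of finite index; in particular $\Gamma/[\Gamma,\Gamma]$ is finite. But (F) gives $\Gamma\cong L_1*L_2\cong\Z^n*\Z^n$, whose abelianization is $L_1\oplus L_2\cong\Z^{2n}$, an infinite group. This contradiction completes the proof, the one genuine difficulty being the finite-covolume step above.
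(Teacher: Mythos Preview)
Your overall strategy—promote $\Gamma$ to an irreducible lattice in $G$ and then invoke Margulis' normal subgroup theorem to contradict the infinite abelianization $L_1\oplus L_2\cong\Z^{2n}$ forced by (F)—matches the paper's argument in one of its two cases. For the finite-covolume step you flag as the main obstacle, the paper cites a theorem of Selberg and Benoist--Oh: under the nondegeneracy hypothesis that every nonzero vector of $L_1$ and of $L_2$ has all coordinates nonzero, discreteness of $\Gamma(L_1,L_2)$ forces it to be commensurable with a conjugate of a Hilbert modular group $\PSL_2(\Oo_K)$ for some totally real $K$ of degree $n$. This yields finite covolume and irreducibility at once, so your separate irreducibility argument, though correct, becomes unnecessary.

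The genuine gap is that you do not treat the case where this nondegeneracy hypothesis fails; your arithmeticity sketch does not obviously go through there, and you cannot conclude finite covolume. The paper handles this case by an elementary argument that bypasses the normal subgroup theorem entirely: if some nonzero $x_0\in L_1$ has, say, first coordinate zero, then Minkowski's lattice point theorem produces nonzero $y_k\in L_2$ whose coordinates other than the first tend to $0$, whence the commutators $[T^{x_0},V^{y_k}]$ tend to $1$ in $G$; discreteness then forces them to be eventually trivial, giving a nontrivial relation in $\Gamma$ that directly contradicts (F). Your irreducibility step is in spirit adjacent to this—both exploit commuting pairs arising from vectors supported on disjoint coordinate sets—but it presupposes finite covolume, which is exactly what is unavailable in this case.
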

\begin{proof}
Consider the following property (irreducibility) of a lattice $L \subset \R^n$
\begin{equation}\label{eq:avoiding-coordinate-axes}
L \sm \{0 \} \subset (\R^{\times})^n . \tag{I}
\end{equation}

For example, if $L$ is the image of a fractional ideal  in a totally real 
number field under the natural embedding then~\eqref{eq:avoiding-coordinate-axes} 
holds. The proof distinguishes two cases, according to whether both $L_1, 
L_2$ satisfy 
\eqref{eq:avoiding-coordinate-axes} or one of them does not. 
 

\emph{Case 1: Both $L_1, L_2$ have property 
\eqref{eq:avoiding-coordinate-axes}}. In this case, by a result of A. Selberg 
(sketched in \cite{Sel}), generalized by Benoist-Oh \cite[Cor. 1.2]{BO}, there exists 
a totally real number field~$K$ of degree~$n$ such that $\Gamma(L_1, 
L_2)$ is commensurable to a conjugate of the group $\PSL_2(\Oo_K)$ embedded 
in~$G$. Since Hilbert modular groups of totally real number fields are known to be 
irreducible lattices\footnote{We use the definition that a lattice $\Gamma$ in a 
connected, real semi-simple Lie group~$G$ with finite center is \emph{irreducible} 
if for all non-discrete closed normal subgroups~$N$ of~$G$ the subgroup $\Gamma N$ is 
dense in~$G$. The set of such irreducible lattices in~$G$ is closed under the 
equivalence relations given by conjugation and commensurability.  See \cite[\S 4.3]{Mor}. 
} in $\PSL_2(\R)^n$, it follows that $\Gamma(L_1, L_2)$ is an irreducible lattice in $G \cong \PSL_2(\R)^n$. Margulis' normal subgroup theorem \cite[Thm 4.9]{Marg} then implies that the abelianization of $\Gamma(L_1, L_2)$ must be finite, which rules out \eqref{eq:condition-free}.


\emph{Case 2: One of the lattices $L_1, L_2$ does not have property 
\eqref{eq:avoiding-coordinate-axes}}. Let us first suppose that $L_1$ does not 
have property \eqref{eq:avoiding-coordinate-axes}. Fix a nonzero element $x_0 
\in L_1$ whose (say) first coordinate is zero. We will construct a sequence of 
lattice vectors $y_k \in L_2 \sm \{0 \}$ such that the commutators 
\[
[T^{x_0}, V^{y_k}] = T^{x_0} V^{y_k}T^{- x_0} V^{- y_k} \in \Gamma(L_1, L_2)
\]
tend to $1 \in G$, as $k \rightarrow \infty$.  As we are assuming that $\Gamma( 
L_1,L_2)$ is discrete, the sequence must be stationary and so 
\eqref{eq:condition-free} would not hold.  To produce the sequence $y_k$, we apply 
Minkowski's lattice point theorem to the convex, compact, centrally symmetric bodies
\[
C_k  :=  \left \{  (t_1, \dots, t_n)  \in \R^n \,:\, |t_1| \leq 1 + k^{n-1}2^{n} \covol{( L_2)},\, \max_{2 \leq j \leq n}{|t_j|} \leq 1/k  \right \}, 
\]
whose volumes are $> 2^n \covol{( L_2)}$. We may thus choose $0 \neq y_k \in L_2 \cap C_k$ and with this choice, we  have $[T^{x_0}, V^{y_k}] \rightarrow 1$ as $k \rightarrow \infty$.

Finally, if $L_2$ does not have property \eqref{eq:avoiding-coordinate-axes}, we can modify the argument just given in an obvious way, by taking a fixed nonzero element $y_0 \in L_2$ with some vanishing coordinate and a sequence of nonzero lattice vectors $x_k \in L_1$ all of whose coordinates tend to zero, except in the coordinate where $y_0$ is zero. 
\end{proof}
To summarize the general results of this section, we have 
shown that for $n \geq 2$ and for any two lattices $\Lambda_1, \Lambda_2 \subset 
\R^n$, the following holds: If the group $\Gamma(2 \Lambda_1^{\vee}, 2 
\Lambda_2^{\vee})$ is discrete, then no interpolation formula as in 
\eqref{eq:interpolation-formula-general} can exist, by Proposition 
\ref{prop:non-existence-of-good-lattices} and Proposition \ref{prop:necessity-of-F}. 
Combined with 
our previous remark on the necessity of \eqref{eq:condition-discrete}, it may well be 
that no interpolation formula of the form \eqref{eq:interpolation-formula-general} 
exists for $n \geq 2$.

\section{Interpolation result via Hecke groups with infinite covolume}\label{sec:interpolation-result}
By the analysis of \S \ref{sec:algebraic-obstructions}, Fourier interpolation for 
square roots of lattices seems to be limited to the case of 
radial Schwartz functions and $1$-dimensional lattices. Let us revisit this case in 
more detail and compare it to similar known results on Fourier uniqueness.

Let $\alpha, \beta > 0$ and consider the one-dimensional lattices $\Lambda_1 = (1/ 
\alpha) \Z \subset \R$ and $ \Lambda_2 = (1/ \beta) \Z \subset \R$. Thus,  we 
consider (the possibility of existence of) interpolation formulas of the form
\begin{equation}\label{eq:redundnat-interpolation-formula}
f(x) = \sum_{n=0}^{\infty}{f(\sqrt{n/ \alpha})a_n(|x|)} +  \sum_{n=0}^{\infty}{ 
\hat{f}(\sqrt{n/ \beta})\tilde{a}_n(|x|)}, \quad \quad  f \in \Ss_{\text{rad}}(\R^d),\, 
x \in \R^d.
\end{equation}
The relevant subgroup of $\PSL_2(\R)$ is thus
\[
\Gamma( 2(\tfrac{1}{\alpha}\Z)^{\vee}, 2 (\tfrac{1}{\beta}\Z)^{\vee}) = \Gamma(2 
\alpha \Z, 2 \beta \Z) = \langle T^{2 \alpha}, V^{2 \beta} \rangle = \langle T^{2 
\alpha} , ST^{2 \beta} S \rangle.
\]
Conjugating the group  by $\begin{psmallmatrix}
t & 0\\
0 & t^{-1}
\end{psmallmatrix}$ with $t = (\beta/ \alpha)^{1/4}$  allows us to reduce to the case 
$\alpha = \beta$. Alternatively, we may reduce to that case by directly applying a 
scaling argument to \eqref{eq:redundnat-interpolation-formula}. We then write 
$\alpha = \beta = \lambda/2$ for $\lambda >0$ and consider the groups
\begin{equation}\label{eq:def-Gamma-lambda-H-lambda}
\Gamma(\lambda) := \Gamma(\lambda \Z, \lambda \Z) = \langle T^{\lambda}, V^{\lambda} \rangle \triangleleft H(\lambda) := \langle S, T^{\lambda} \rangle \leq \PSL_2(\R). 
\end{equation}
The latter groups $H(\lambda)$ are well-studied and known to be  discrete precisely 
when $\lambda \geq 2$ or  $\lambda = 2 \cos(\pi/p)$ for some integer $p \geq 3$ (we 
refer to~\cite{BK} or~\cite{H1} for background). The group $\Gamma(\lambda)$ is known 
to be discrete \emph{and} free precisely when $\lambda \geq 2$. The papers~\cite{RV}, 
\cite{BRS}, \cite{S} focus on the case $\lambda =2$. Recently, Sardari~\cite{Sar} 
investigated the case $1 < \lambda < 2$ to answer a question raised in~\cite{CKMRV}.  
The paper~\cite{CKMRV} itself considers the case $\lambda=1$, but in a vector-valued 
setting. 
\begin{figure}[h]
    \centering
    \begin{tikzpicture}
    \definecolor{cv0}{rgb}{0.95,0.95,0.95}
    \definecolor{cv1}{rgb}{0.90,0.90,0.90}
    \clip(-9,-0.7) rectangle (7,3.5);

    \begin{scope}[scale=0.6, xshift=-1.5cm]
    \draw[lightgray] (-6,0) -- (6,0);
    \fill[color=cv0]  (5,6.5) -- (5,0) -- (3,0) arc (0:180:3) -- (-5,0) -- (-5,6.5);
    \draw[gray,dashed] (-5,0)  --  (-5,6.5);
    \draw[gray,dashed] (5,0)  --  (5,6.5);
    \draw[gray,dashed] (-3,0) arc  (180:0:3);
    \draw (0,4.2) node[above]{$\mathcal{F}_{\lambda}$};
    \fill[black] (-5,0) circle (0.07) node[below] {$-\tfrac{\lambda}{2}$};
    \fill[black] (-3,0) circle (0.07) node[below] {$-1$};
    \fill[black] (0,0) circle (0.07) node[below] {$0$};
    \fill[black] (3,0) circle (0.07) node[below] {$1$};
    \fill[black] (5,0) circle (0.07) node[below] {$\tfrac{\lambda}{2}$};
    \end{scope}
    \end{tikzpicture}
    
    \caption{A fundamental domain for $H(\lambda)$ for $\lambda>2$}\label{fig:Hlambdadomain}
\end{figure}
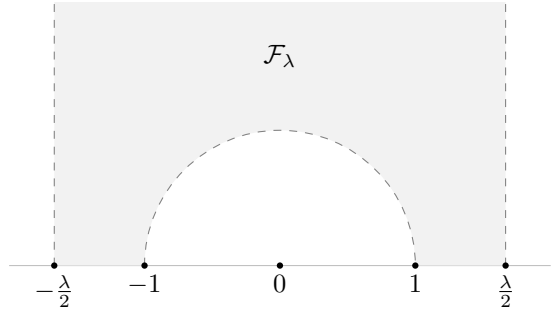

In view these results and of the conditions \eqref{eq:condition-discrete}, 
\eqref{eq:condition-free}, it remains to consider the case $\lambda > 2$, which is 
the purpose of this section. Using a series construction similar to the construction 
of Poincar{\'e} series and analogous to the one used in \cite{S}, we will prove the 
following theorem.


\begin{theorem}\label{thm:interpolation-theorem-lambda-bigger-2}
Let $\lambda \geq 2$ be a real number and let $d \geq 5$ be an integer. Set $k = d/2$. There exist sequences of entire even functions $a_{k,\lambda, n}, \tilde{a}_{k, \lambda, n} :  \C \rightarrow \C$, $n = 1,2,3, \dots$, such that for all $f \in \mathcal{S}_{\text{rad}}(\R^d)$ and all $x \in \R^d$ we have
\begin{equation}\label{eq:interpolation-formula-in-thm}
f(x) = \sum_{n=1}^{\infty}{a_{k, \lambda,n}(|x|) f( \sqrt{2n/ \lambda }) } + \sum_{n=1}^{\infty}{{\tilde{a}}_{k, \lambda, n}(|x|) \widehat{f}( \sqrt{2n/ \lambda }) }
\end{equation}
and both series converge absolutely and uniformly on $\R^d$. There are absolute constants $C_1,C_2, C_3 > 0$ such that
\begin{equation}\label{eq:precise-uniform-upper-upper-bound-in-thm}
\sup_{r \in \R}{|a_{k, \lambda,n}(r)|} + \sup_{r \in \R}{|\tilde{a}_{k, \lambda,n}(r)| } \leq C_1 (C_2/ k)^{k/2} n^{k}
\end{equation}
for all $n \geq 1$ and such that
\begin{equation}\label{eq:precise-point-wise-upper-bound-in-thm}
|a_{k, \lambda,n}(r)| +|\tilde{a}_{k, \lambda,n}(r)| \leq C_3 n^{k/2+9/8} r^{-k+9/4}
\end{equation}
for all $ r > 0$ and $n \geq 1$.
\end{theorem}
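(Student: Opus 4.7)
The plan is to adapt the Poincaré-type series construction of \cite[\S 4]{S}, which handled the case $\lambda=2$, to the full range $\lambda\geq 2$. The central algebraic fact is that the group $\Gamma(\lambda) = \langle T^{\lambda}, V^{\lambda}\rangle$ defined in~\eqref{eq:def-Gamma-lambda-H-lambda} is discrete and free of rank two for all such $\lambda$ (for $\lambda>2$ this is a standard ping-pong argument in $\PSL_2(\R)$). Set $k=d/2\geq 5/2$ and, for $r\geq 0$, let $g_{r}(z):=e^{\pi i z r^{2}}$, equipped with the weight-$k$ slash action $f|\gamma(z)=(cz+d)^{-k}f(\gamma z)$ and the branch conventions of \S\ref{subsec:notation}. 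The Fourier transform of $g_{r}$ as a radial function on $\R^{d}$ is then $(z/i)^{-k}g_{r}(-1/z)$.

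For each $r$ I would construct a pair of generating functions $F_{r},\tilde F_{r}\colon\H\to\C$, holomorphic and $\lambda\Z$-periodic, of the form
\[
F_{r}(z)=\sum_{n\geq 1}a_{k,\lambda,n}(r)\,e^{2\pi i n z/\lambda},\qquad
\tilde F_{r}(z)=\sum_{n\geq 1}\tilde a_{k,\lambda,n}(r)\,e^{2\pi i n z/\lambda},
\]
satisfying the functional equation $g_{r}(z)=F_{r}(z)+(z/i)^{-k}\tilde F_{r}(-1/z)$. Following \cite{S}, $F_{r}$ and $\tilde F_{r}$ are defined as partial Poincaré sums over two complementary collections of reduced words in $\Gamma(\lambda)$: those ending in a nontrivial power of $V^{\lambda}$, respectively of $T^{\lambda}$, so that left multiplication by $S$ interchanges the two collections modulo $\langle T^{\lambda}\rangle$ and the sums are unambiguous by freeness. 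Absolute and locally uniform convergence of both series is justified by dominating $|g_{r}(\gamma z)|\leq 1$ and invoking $\sum_{\gamma\in\langle T^{\lambda}\rangle\backslash\Gamma(\lambda)}|cz+d|^{-2k}<\infty$ for $k\geq 5/2>1$; this is precisely where the hypothesis $d\geq 5$ is used. The functional equation then follows by rearranging the $\tilde F_{r}|S$ sum back into $g_{r}-F_{r}$, a manipulation that is legitimate thanks to the absolute convergence and the cocycle property. Substituting $f=g_{r}$ into~\eqref{eq:interpolation-formula-in-thm} and expanding both sides in Fourier series at $z$ and $-1/z$ shows that the identity reduces exactly to the functional equation, so~\eqref{eq:interpolation-formula-in-thm} holds on the dense subspace of Gaussians, hence on all of $\Ss_{\text{rad}}(\R^{d})$ by Proposition~\ref{prop:density-of-Gaussians}.

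The quantitative bounds are then obtained by extracting the coefficients as horizontal contour integrals $a_{k,\lambda,n}(r)=\lambda^{-1}\int_{0}^{\lambda}F_{r}(x+iy)e^{-2\pi i n(x+iy)/\lambda}\,dx$ and choosing $y>0$ optimally. For the uniform-in-$r$ estimate~\eqref{eq:precise-uniform-upper-upper-bound-in-thm} I would dominate $|F_{r}(z)|$ by $\sum_{\gamma}|cz+d|^{-k}$ (using $|g_{r}|\leq 1$) and optimize $y\sim k\lambda/(4\pi n)$, producing the Stirling-type factor $(C_{2}/k)^{k/2}$ and the polynomial $n^{k}$; the same applies to $\tilde a_{k,\lambda,n}$. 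The pointwise bound~\eqref{eq:precise-point-wise-upper-bound-in-thm} is more delicate: one must use the genuine Gaussian decay $|g_{r}(\gamma z)|=e^{-\pi r^{2}\imag(\gamma z)}$ rather than the crude $\leq 1$, splitting the Poincaré sum into an ``identity-like'' part (where $\imag(\gamma z)$ is comparable to $y$) and a tail, and balancing $y$ simultaneously against $n$ and $r$. The specific exponents $9/8$ and $9/4$ come from this two-parameter optimum.

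I expect the main technical obstacle to be exactly this two-parameter contour optimization, together with a uniform control of the Poincaré sum $\sum_{\gamma\in\langle T^{\lambda}\rangle\backslash\Gamma(\lambda)}|cz+d|^{-k}$ near the cusp $i\infty$ for the infinite-covolume free Fuchsian group $\Gamma(\lambda)$ with $\lambda>2$, where the usual finite-covolume fundamental-domain estimates do not apply directly. Once these estimates are in place, the rest of the argument is a direct transcription of the pattern developed in \cite[\S 4]{S}, with the rank-two free structure of $\Gamma(\lambda)$ playing the same role that $\Gamma(2)$ plays there.
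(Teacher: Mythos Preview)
Your plan matches the paper's approach: Poincar\'e-type sums over reduced words in $\Gamma(\lambda)$, the functional equation $g_r = F_r + \tilde F_r|S$, extraction of Fourier coefficients, and density of Gaussians. You also correctly isolate the main technical obstacle --- uniform control of $\sum_{\gamma}|c_\gamma z+d_\gamma|^{-k}$ when $\Gamma(\lambda)$ has infinite covolume --- but you do not resolve it, and this is the one genuinely new ingredient the paper supplies. The paper's device (Lemma~\ref{lem:inequalities-1}(vi)) is to view, for a fixed reduced word in the generators, the entries $c_\gamma,d_\gamma$ as polynomials in $\lambda$ with integer coefficients; parts (iii) and (v) force all their complex zeros into $|\lambda|<2$, so by Gauss--Lucas the derivatives also have no real zeros on $[2,\infty)$, whence $|c_\gamma|,|d_\gamma|$ are monotone in $\lambda$ on $[2,\infty)$. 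Every convergence and growth estimate then reduces termwise to the $\lambda=2$ case, where bottom rows are primitive integer pairs and the standard lattice bound applies.

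For the pointwise estimate~\eqref{eq:precise-point-wise-upper-bound-in-thm}, the paper does not split into an identity-like part and a tail as you suggest; instead it inserts $\imag(\gamma z)^{\beta}\imag(\gamma z)^{-\beta}$, uses $e^{-\pi r^2 t}t^{\beta}\le(\beta/\pi e r^2)^{\beta}$, and rewrites $\imag(\gamma z)^{-\beta}=\imag(z)^{-\beta}|c_\gamma z+d_\gamma|^{2\beta}$ to obtain a Poincar\'e sum of lowered weight $k-2\beta$; choosing $\beta=k/2-9/8$ makes this weight $9/4$, which explains the exponents. Two small slips in your write-up: the convergence threshold for $\sum|c z+d|^{-k}$ over a rank-two set of bottom rows is $k>2$ (not $k>1$), and the exponent is $-k$, not $-2k$; this is indeed why $d\ge 5$ (i.e.\ $k\ge 5/2>2$) enters.
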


\begin{corollary}\label{cor:non-radial-uniqueness-corollary-lambda-bigger-two}
For all $d \geq 5$ and $\alpha, \beta > 0$ such that $\alpha \beta > 1$ and all integers $n_0 \geq 1$,  the pair 
$\left(\cup_{n \geq n_0}{\sqrt{n/\alpha}S^{d-1}}, \cup_{n \geq 
n_0}{\sqrt{n/\beta}S^{d-1}} \right)$ is  a Fourier uniqueness pair for $\mathcal{S}(\R^d)$. If $\alpha \beta =1$, then this is true for $n_0 =1$.  
\end{corollary}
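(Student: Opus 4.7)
The plan is to derive the corollary from Theorem \ref{thm:interpolation-theorem-lambda-bigger-2} via two standard reductions---a scaling bringing us to the symmetric case $\alpha = \beta$, and a spherical-harmonic decomposition reducing non-radial to radial Schwartz functions---followed by an argument using Hecke's infinite-dimensional spaces of modular forms to handle the removal of finitely many spheres.

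First, the substitution $f \mapsto f_t$ with $f_t(x) := f(tx)$ and $t := (\beta/\alpha)^{1/4}$ rescales the Fourier transform as $\widehat{f_t}(\xi) = t^{-d}\widehat{f}(\xi/t)$, so the pair of node sets is transformed into the symmetric pair with $\alpha' = \beta' = \sqrt{\alpha\beta}$. Setting $\lambda := 2\sqrt{\alpha\beta}$, the hypothesis $\alpha\beta \geq 1$ is equivalent to $\lambda \geq 2$, and the node radii become $\sqrt{2n/\lambda}$. Next, decompose $f \in \mathcal{S}(\R^d)$ as $f(x) = \sum_{\ell \geq 0, j} P_{\ell,j}(x)\, \phi_{\ell,j}(|x|)$, where $\{P_{\ell,j}\}_j$ is an orthonormal basis of homogeneous harmonic polynomials of degree $\ell$ on $\R^d$ and each $\phi_{\ell,j}$ extends to a radial Schwartz function $F_{\ell,j}(y) := \phi_{\ell,j}(|y|)$ on $\R^{d'}$ with $d' := d + 2\ell \geq d \geq 5$. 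By Bochner's identity,
\[
\mathcal{F}_{\R^d}\bigl(P_{\ell,j}\,\phi_{\ell,j}(|\cdot|)\bigr)(\xi) = i^{-\ell}\, P_{\ell,j}(\xi)\, \mathcal{F}_{\R^{d'}}(F_{\ell,j})(\xi),
\]
where $\mathcal{F}_{\R^{d'}}(F_{\ell,j})$ on the right is radial and depends only on $|\xi|$. The vanishing of $f$ on $\sqrt{n/\alpha}S^{d-1}$ forces $\phi_{\ell,j}(\sqrt{n/\alpha}) = 0$ by linear independence of the $P_{\ell,j}$ on each sphere, and the same argument on the Fourier side shows that each $F_{\ell,j}$ and its Fourier transform in $\R^{d'}$ vanish at the required radii.

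For $\alpha\beta \geq 1$ with $n_0 = 1$, Theorem \ref{thm:interpolation-theorem-lambda-bigger-2} applied in dimension $d'$ to each $F_{\ell,j}$ immediately yields $F_{\ell,j} \equiv 0$, hence $f \equiv 0$. For $\alpha\beta > 1$ with $n_0 \geq 2$ we have $\lambda > 2$, and here I would invoke Hecke's result \cite[\S 3]{H1} that the space of modular forms for the Hecke group $H(\lambda)$ in each weight is infinite-dimensional. Using modular forms whose $q$-expansions begin at $q^{n_0}$ or higher, one modifies the Poincar\'e-type construction underlying Theorem \ref{thm:interpolation-theorem-lambda-bigger-2} to produce new coefficient families $\{a_{k,\lambda,n}\}_{n \geq n_0}$ and $\{\tilde{a}_{k,\lambda,n}\}_{n \geq n_0}$; equivalently, subtracting suitable multiples of such modular forms from the existing series kills the contribution of the first $n_0 - 1$ nodes. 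The existence of the resulting modified interpolation formula yields the claimed uniqueness statement.

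The main technical obstacle is this last step: verifying that Hecke's higher-weight modular forms can be inserted into the Poincar\'e construction of \cite[\S 4]{S} in a way that preserves absolute uniform convergence together with decay bounds analogous to \eqref{eq:precise-uniform-upper-upper-bound-in-thm}--\eqref{eq:precise-point-wise-upper-bound-in-thm}. Once this is in place, the modified interpolation formula directly implies uniqueness for every $n_0 \geq 1$ in the regime $\alpha\beta > 1$.
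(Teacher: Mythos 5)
Your scaling reduction to $\alpha=\beta=\lambda/2$ and the Bochner/spherical-harmonic decomposition reducing $\mathcal S(\R^d)$ to radial Schwartz functions on $\R^{d+2\ell}$ are both correct and match the paper (which simply cites \cite[Cor 2.2]{S} for the latter). The $n_0=1$ case then follows exactly as you say from Theorem \ref{thm:interpolation-theorem-lambda-bigger-2} applied in dimension $d'=d+2\ell\ge5$.

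However, the node-removal step for $n_0>1$ is stated backwards, and this is a genuine gap. The interpolation formula \eqref{eq:interpolation-formula-in-thm} expresses $f$ through the data $f(\sqrt{2n/\lambda})$, $\widehat f(\sqrt{2n/\lambda})$ for \emph{all} $n\ge1$; to obtain uniqueness from the data with $n\ge n_0$ alone one must \emph{remove} the contributions from $n=1,\dots,n_0-1$, i.e.\ alter the generating functions $F_{k,\lambda}^{\pm}$ so that their Fourier coefficients at indices $1,\dots,n_0-1$ vanish while the functional equation \eqref{eq:functional-equation-S-with-signs} is preserved. Subtracting a modular form on $H(\lambda)$ whose $q$-expansion begins at $q^{n_0}$ or higher, as you propose, changes only the tail $n\ge n_0$ and does nothing to the offending initial coefficients. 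What is needed is precisely the opposite: modular forms in $M_k(\lambda,\epsilon)$ with \emph{small} vanishing order, specifically forms vanishing to order exactly $n$ at the cusp for each $n=1,\dots,n_0-1$, so that suitable linear combinations (Proposition \ref{prop:cusp-forms} and the discussion preceding it in Appendix \ref{sec:appendix-removal-nodes}) can annihilate the first $n_0-1$ Fourier coefficients. Moreover, Hecke's construction in \cite[\S3]{H1} directly yields forms of vanishing order only $\ge k/2$; the paper has to supply an extra observation (the identity $\overline{h(\overline\tau)}=a_0/h(\tau)$, giving a lower bound for $|h-1|$ near the real line) to push the admissible vanishing orders down to all $n\ge1$. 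So your sketch is missing both the correct direction of the modification and the non-trivial low-order modular forms it requires.

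Once this is corrected, the remaining worry you raise about preserving convergence is not serious: since a modular form in $M_k(\lambda,\epsilon)$ has polynomially bounded Fourier coefficients by definition, subtracting a finite linear combination of such forms (with coefficients $b^{-\epsilon}_{k,n}(r)$, bounded in $r$) changes $a_{k,\lambda,n}(r)$ and $\tilde a_{k,\lambda,n}(r)$ by a polynomially bounded sequence and does not affect the absolute-uniform convergence of \eqref{eq:interpolation-formula-in-thm}.
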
 
\begin{remark}
In the known case $\lambda =2$, one can actually take $n_0 = \lfloor (d + 4)/8 \rfloor$ for all $d \geq 2$ in the statement of the corollary. This follows from \cite{BRS}, see also \cite[Thm. 1 and 2]{RS}.
\end{remark}
\begin{proof}[Proof of Corollary \ref{cor:non-radial-uniqueness-corollary-lambda-bigger-two}]
As already explained, it suffices to consider $\alpha = \beta = \lambda/2$. For radial functions and $n_0 = 1$, this is a direct consequence of Theorem~\ref{thm:interpolation-theorem-lambda-bigger-2}. For $n_0 >1$, we will prove in Appendix~\ref{sec:appendix-removal-nodes} that  the radial interpolation formula \eqref{eq:interpolation-formula-in-thm} can be modified, so that both series start at $n = n_0$. We will see that this is a consequence of the fact that, for $\lambda >2$, the space modular forms of weight $k$ on $H(\lambda)$ is infinite dimensional; see \cite[\S 3]{H1} or also \cite[Ch. 4]{BK}.
Finally, the case of general Schwartz function follows from the case of radial Schwartz functions by \cite[Cor 2.2]{S}. 
\end{proof}
The proof of Theorem \ref{thm:interpolation-theorem-lambda-bigger-2} will occupy the 
remainder of \S \ref{sec:interpolation-result}. We remark that  weaker and less 
explicit bounds than \eqref{eq:precise-uniform-upper-upper-bound-in-thm} and 
\eqref{eq:precise-point-wise-upper-bound-in-thm} suffice to establish 
\eqref{eq:interpolation-formula-in-thm} with point-wise absolute convergence. The 
more explicit bounds \eqref{eq:precise-uniform-upper-upper-bound-in-thm} and 
\eqref{eq:precise-point-wise-upper-bound-in-thm} can be used to upgrade the 
uniqueness result of Corollary 
\ref{cor:non-radial-uniqueness-corollary-lambda-bigger-two} to an 
\emph{interpolation} formula that can be written as in \cite[Thm.~1]{S} which then 
justifies the claim made in \S \ref{subsec:general-lattices-in-intro}. But even 
without these more explicit bounds,  \cite[Cor 2.1]{S} allows to deduce \emph{some} 
interpolation result from \eqref{eq:interpolation-formula-in-thm}, but possibly 
suboptimal from analytic point of view. 
The details of this passage are almost identical to 
the analysis in \cite[\S 3]{S} so we will not give them here. However, we include a 
proof of~\eqref{eq:precise-uniform-upper-upper-bound-in-thm}
and~\eqref{eq:precise-point-wise-upper-bound-in-thm} 
in Appendix~\ref{sec:appendix-explicit-upper-bounds}, for the sake of completeness and since it is not 
obvious how to 
generalize the corresponding proof in the case for $\lambda = 2$ from~\cite{S}. 
In this section \S\ref{sec:interpolation-result}, we will prove enough 
to establish~\eqref{eq:interpolation-formula-in-thm} with absolute uniform 
convergence by proving a version 
of~\eqref{eq:precise-uniform-upper-upper-bound-in-thm} with unspecified 
dependence on~$k$ and~$\lambda$.
%

\subsection{Preliminaries for the proof of Theorem 
\ref{thm:interpolation-theorem-lambda-bigger-2}}
Below in~\S\ref{sec:def-of-F-and-tilde-F}, we will define the functions $a_{k, 
\lambda, n}(r)$, 
$\tilde{a}_{k, \lambda, n}(r)$ that enter \eqref{eq:interpolation-formula-in-thm} in 
Theorem \ref{thm:interpolation-theorem-lambda-bigger-2} as the Fourier coefficients 
of certain $2$-periodic holomorphic function $F_{k, \lambda}(z,r)$, $\tilde{F}_{k, 
\lambda}(z,r)$ but before defining those, we gather here some  notation and 
preliminary results. 
\subsubsection{Notation for \S \ref{sec:interpolation-result}}

For the remainder of \S \ref{sec:interpolation-result}, $k$ denotes a real number and 
we will also assume (most of the time) that $k > 2$. 
For $x \in \R$ we use the elements $T^{x}, 
V^{x} \in \PSL_2(\R)$ as defined in \eqref{eq:definition-Tx-Vx} as well as the 
element $S = \begin{psmallmatrix} 0 & -1\\1 & 0 \end{psmallmatrix}$. We allow 
$\lambda$ to be a complex number and define $\Gamma(\lambda)$ and $H(\lambda)$ as 
subgroups of $\PSL_2(\C)$ via the generators as in 
\eqref{eq:def-Gamma-lambda-H-lambda}. 
The reason for this is mainly for the proof of part (vi) of Lemma 
\ref{lem:inequalities-1} below, but otherwise, we are only interested in real 
$\lambda \geq 2$.
For expediency, we will sometimes use the notation $\gamma= \begin{psmallmatrix} 
a_{\gamma} & b_{\gamma}\\ c_{\gamma} & d_{\gamma} \end{psmallmatrix}$ for entries of 
a $2$-by-$2$ matrix. If~$\gamma$ is an element of $\PSL_2(\R) = \SL_2(\R)/ \{ \pm I 
\}$, we will only use such notation if the expression in terms of $a_{\gamma}, 
b_{\gamma}, c_{\gamma},d_{\gamma}$ is well-defined for $\gamma \in \PSL_2(\R)$, e.g. 
$|a_{\gamma}| \in \R_{\geq 0}$ is well-defined for $\gamma \in \PSL_2(\R)$ and so is 
the condition $c_{\gamma} \neq 0$.  
\subsubsection{Slash action}\label{subsec:slash-action}
For $\lambda \geq 2$, it is known that the only relation in the Hecke group $H(\lambda)$ is $S^2 = 1$. We may therefore define a $1$-cocycle $j_k : H(\lambda) \rightarrow \Hol(\H, \C^{\times})$ by prescribing its values on the generators $S, T^{\lambda}$:
\[
j_k(S)(z) := j_k(S, z) = (z/i)^{k} := \exp(k \log(z/i)), \quad j_k(T^{\lambda})(z) := j_k(T^{\lambda},z):=1
\]
and in general by requiring the cocycle property $j_k(\gamma_1 \gamma_2) = 
(j_k(\gamma_1) \circ \gamma_2) \cdot j_k(\gamma_2)$ to hold. Since $j_k$ respects the 
relation $S^2 = 1$, this is possible. We define a right action of $H(\lambda)$ on the 
space of all $\C$-valued functions $F$ on $\H$, by $F|_k \gamma := j_k(\gamma)^{-1} 
\cdot (F \circ \gamma)$.  We extend it to the group algebra $\C[H(\lambda)]$ in the 
usual way.
\begin{lemma}\label{lem:absolute-value-cocycle}
For all $k \in \R$, all $\gamma  \in H(\lambda)$, and all $z \in \H$ we have
\[
|j_k(\gamma)(z)| = |c_{\gamma}z + d_{\gamma}|^k.
\]
\end{lemma}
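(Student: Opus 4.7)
The plan is to prove the identity by induction on the word length of $\gamma$ in the generators $S$ and $T^{\lambda}$ of $H(\lambda)$. This is natural since the cocycle $j_k$ has been defined precisely by specifying its values on these generators and extending multiplicatively via the cocycle property.

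For the base cases, when $\gamma = T^{\lambda}$, we have $j_k(T^{\lambda})(z) = 1$ and $c_{T^{\lambda}} z + d_{T^{\lambda}} = 1$, so both sides equal $1$. When $\gamma = S$, we have $j_k(S)(z) = (z/i)^k$, and by the conventions of \S\ref{subsec:notation}, $|(z/i)^k| = |z/i|^k = |z|^k$, which matches $|c_S z + d_S|^k = |z|^k$.

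For the inductive step, write $\gamma = \gamma_1 \gamma_2$ with $\gamma_i$ of strictly shorter word length. The cocycle property gives
\[
|j_k(\gamma)(z)| = |j_k(\gamma_1)(\gamma_2 z)| \cdot |j_k(\gamma_2)(z)|,
\]
so by the inductive hypothesis the right-hand side equals $|c_{\gamma_1}(\gamma_2 z) + d_{\gamma_1}|^k \cdot |c_{\gamma_2} z + d_{\gamma_2}|^k$. I would then invoke the standard automorphy factor identity
\[
c_{\gamma_1 \gamma_2} z + d_{\gamma_1 \gamma_2} = \bigl( c_{\gamma_1}(\gamma_2 z) + d_{\gamma_1} \bigr)\bigl( c_{\gamma_2} z + d_{\gamma_2} \bigr),
\]
which is a direct one-line check from matrix multiplication (writing $\gamma_2 z = (a_{\gamma_2} z + b_{\gamma_2})/(c_{\gamma_2} z + d_{\gamma_2})$ and clearing denominators). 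Taking $|\cdot|^k$ completes the induction.

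There is no real obstacle: since only absolute values are involved, all ambiguity in the branch of the logarithm entering $(z/i)^k$ disappears, and we do not need to keep track of possible eighth-roots of unity arising from the relation $S^2 = 1$ (the cocycle respects this relation by construction, which has already been verified in the setup of \S\ref{subsec:slash-action}). The proof is essentially a one-page standard computation, and the only thing to be slightly careful about is invoking the cocycle identity $j_k(\gamma_1 \gamma_2) = (j_k(\gamma_1) \circ \gamma_2) \cdot j_k(\gamma_2)$ in the correct order.
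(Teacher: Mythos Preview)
Your proof is correct and is essentially the same as the paper's. The paper simply packages your induction more abstractly: it observes that both $\gamma \mapsto |j_k(\gamma)(z)|$ and $\gamma \mapsto |c_{\gamma}z+d_{\gamma}|^k$ are $1$-cocycles $H(\lambda)\to C(\H,\R_{>0})$, so equality on the generators $S,T^{\lambda}$ implies equality everywhere---which is exactly what your induction on word length establishes.
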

\begin{proof}
Both sides of the claimed identity are $1$-cocyles $H(\lambda) \rightarrow C(\H, 
\R_{>0})$, so it suffices to verify the identity for the generators $S, 
T^{\lambda}$ 
of $H(\lambda)$, in which case it follows from the definitions.
\end{proof}

\subsubsection{Complex $\lambda$}\label{subsec:complex-lambda}We will need the 
following lemma.
\begin{lemma}\label{lem:gamma-lambda-is-free}
For $\lambda \in \C$ with $|\lambda| \geq 2$ the group $\Gamma(\lambda) = \langle T^{\lambda}, V^{\lambda} \rangle \leq \PSL_2(\C)$ is freely generated by $T^{\lambda}$ and $V^{\lambda}$.
\end{lemma}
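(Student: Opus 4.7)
The plan is to apply the Klein ping-pong lemma to the two cyclic subgroups $\langle T^\lambda\rangle$ and $\langle V^\lambda\rangle$ acting on $\widehat{\C}=\C\cup\{\infty\}$ by Möbius transformations. Since $|\lambda|\ge 2>0$, the matrices $T^\lambda$ and $V^\lambda$ have infinite order in $\PSL_2(\C)$: indeed $(T^\lambda)^n=T^{n\lambda}$ and $(V^\lambda)^n=V^{n\lambda}$, and these are non-identity in $\PSL_2(\C)$ for every $n\neq 0$. In particular both cyclic groups have order at least $3$, which is the hypothesis needed for the standard ping-pong lemma to conclude that $\Gamma(\lambda)=\langle T^\lambda\rangle*\langle V^\lambda\rangle$.

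For the ping-pong sets I take the two disjoint regions
\[
A:=\{z\in\widehat{\C}\,:\,|z|>1\}\cup\{\infty\},\qquad B:=\{z\in\C\,:\,|z|<1\},
\]
and verify, using only the triangle inequality and $|\lambda|\ge 2$, that for every $n\in\Z\smallsetminus\{0\}$ one has $T^{n\lambda}(B)\subset A$ and $V^{n\lambda}(A)\subset B$. For the first inclusion, if $z\in B$ then $|T^{n\lambda}(z)|=|z+n\lambda|\ge |n||\lambda|-|z|\ge 2-|z|>1$. For the second, if $z\in A$ is finite then $|V^{n\lambda}(z)|=|z|/|n\lambda z+1|$ and $|n\lambda z+1|\ge |n||\lambda||z|-1\ge 2|z|-1>|z|$ because $|z|>1$, so $|V^{n\lambda}(z)|<1$; while $V^{n\lambda}(\infty)=1/(n\lambda)$ satisfies $|1/(n\lambda)|\le 1/2<1$, so also lies in $B$.

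Having verified the ping-pong hypotheses, the Klein lemma gives that any nontrivial reduced word in $T^\lambda$ and $V^\lambda$ maps a chosen basepoint in $B$ into $A$ (or vice versa), and so cannot be the identity in $\PSL_2(\C)$. This yields the isomorphism $\Gamma(\lambda)\cong\langle T^\lambda\rangle*\langle V^\lambda\rangle$, which is the free group on the two generators $T^\lambda$ and $V^\lambda$, as claimed.

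There is no essential obstacle here: the entire argument is a textbook ping-pong setup, and the only computations required are the two triangle-inequality estimates above, both of which become strict precisely at the threshold $|\lambda|=2$. One small point of care is to include the fixed point $\infty$ of $T^\lambda$ in the set $A$ so that the inclusion $V^{n\lambda}(A)\subset B$ still holds; this is handled directly by the computation $V^{n\lambda}(\infty)=1/(n\lambda)$.
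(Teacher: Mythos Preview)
Your proof is correct and follows essentially the same ping-pong strategy as the paper. The only difference is cosmetic: the paper uses the $\lambda$-dependent sets $X_\lambda=\{|z|\le 1/(|\lambda|-1)\}$ and $Y_\lambda=\{\infty\}\cup\{|z|\ge|\lambda|-1\}$, whereas you use the fixed open regions $\{|z|<1\}$ and $\{|z|>1\}\cup\{\infty\}$; both choices make the triangle-inequality estimates go through for $|\lambda|\ge 2$, and your open sets have the minor advantage of being genuinely disjoint even at the threshold $|\lambda|=2$.
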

\begin{proof}
Consider the following subsets of $\mathbb{P}^1(\C)$:
\[
X_{\lambda} := \{ z\in \C \,:\, |z| \leq 1/(|\lambda|-1) \}, \quad Y_{\lambda} = \{ \infty \} \cup \{ z \in \C \,:\, |z| \geq (|\lambda|-1) \}.
\]
Let $m \in \Z \sm \{0 \}$. By the Ping Pong lemma, it suffices to show that
\[
T^{m \lambda}X_{\lambda} \subset Y_{\lambda}, \qquad V^{m \lambda} Y_{ \lambda} \subset X_{\lambda}.
\]
Since $S X_{\lambda} = Y_{\lambda}$, $S^2 = 1$ and $ST^{m \lambda}S = V^{-m \lambda}$, it suffices to prove the first of these containments. And indeed,  for $z \in X_{\lambda}$, we have
\[
|T^{m \lambda}z|= |z + m\lambda| \geq |m||\lambda|-|z| \geq |\lambda|-|z| \geq |\lambda|-\frac{1}{|\lambda|-1}  \geq |\lambda|-1,
\]
since the last inequality is equivalent to $|\lambda| \geq 2$.
\end{proof}
\subsubsection{Special subsets of $\Gamma(\lambda)$}\label{sec:special-subsets}
For $\lambda \in \C$ with $|\lambda| \geq 2$, we define the subset $\mathcal{V}_{\lambda} \subset \Gamma(\lambda)$ to be the set of all $\gamma \in \Gamma(\lambda)$ of the form
\begin{equation}\label{eq:general-element-from-V-lambda}
\gamma = V^{e_1 \lambda} T^{f_1 \lambda} V^{e_2 \lambda} T^{f_2 \lambda} \cdots  V^{e_n \lambda} T^{f_n \lambda},
\end{equation}
where $n \geq 1$ and $e_1, \dots, e_n, f_1, \dots f_{n-1} \in \Z \sm \{0\}$, $f_n \in 
\Z$.

We also define two subsets $\Rr_{\lambda}, \tilde{\Rr}_{\lambda} \subset \{1 \} \cup \mathcal{V}_{\lambda}$ by
\[
\Rr_{\lambda} := \{ \gamma \in \mathcal{V}_{\lambda} \,:\, \gamma \text{ as in } \eqref{eq:general-element-from-V-lambda} \text{ with } f_n = 0 \}, \quad \tilde{\Rr}_{\lambda} := \{1 \} \cup  \{ \gamma \in \mathcal{V}_{\lambda} \,:\, \gamma \text{ as in } \eqref{eq:general-element-from-V-lambda} \text{ with } f_n \neq 0 \}.
\]
The set $\mathcal{V}_{\lambda}$ is stable under right multiplication by powers of $T^{\lambda}$ and  $ \mathcal{R}_{\lambda}$ is a complete set of pairwise inequivalent representatives for $\mathcal{V}_{\lambda}/ \langle T^{\lambda} \rangle$. Similarly, $\{1 \} \cup \mathcal{V}_{\lambda}$ is stable under right multiplication by powers of $V^{\lambda}$ and $\tilde{\mathcal{R}}_{\lambda}$ is a complete set of pairwise inequivalent representatives for $(\{1 \} \cup \mathcal{V}_{\lambda})/ \langle V^{\lambda} \rangle$. 

\begin{lemma}\label{lem:inequalities-1}
Consider an element $\gamma \in \mathcal{V}_{\lambda}$ as in \eqref{eq:general-element-from-V-lambda} and write $\gamma = \begin{psmallmatrix}
a & b\\ c & d
\end{psmallmatrix}$, so that the entries $a,b,c,d$ depend on $n, e_i, f_i$ and 
$\lambda$. Then the following holds:
\begin{enumerate}[(i)]
\item If $f_n =0$, then $|c| \geq  |d|$.
\item If $f_n  \neq 0$, then $|d| \geq  |c|$.
\item $c \neq 0 \neq d$.
\item $|a| \leq |c|$ and $|b| \leq |d|$.
\item Viewing $\lambda$ as a formal variable and the entries of $\gamma$ as elements 
of $\Z[\lambda]$, the degrees of the polynomials~$c$ and~$d$ are at least $2n-2$. 
\item Viewed as functions of $\lambda \in  [2, \infty)$, the entries $|c|$ and $|d|$ are monotonically increasing on $[2, \infty)$.
\end{enumerate}
\end{lemma}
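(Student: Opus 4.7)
The plan is to prove (i)--(iv) by a single induction on the number of elementary factors, handle (v) by tracking leading coefficients under the same recursion, and save (vi) for last, where the proof will require the ping-pong argument of Lemma~\ref{lem:gamma-lambda-is-free} applied for complex~$\lambda$. For (i)--(iv), I would write $\gamma$ as the product of $2n$ alternating elementary matrices $V^{e_i \lambda}$ and $T^{f_i \lambda}$ and let $\gamma^{(k)} = \begin{psmallmatrix} a_k & b_k \\ c_k & d_k \end{psmallmatrix}$ denote the partial product of the first $k$ factors read from the left. Right multiplication by $V^{e\lambda}$ sends $(c,d)$ to $(c + e\lambda d,\, d)$, and right multiplication by $T^{f\lambda}$ sends $(c,d)$ to $(c,\, cf\lambda + d)$. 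The key strengthened inductive claim is that immediately after a $V$-step one has $|c_k| \geq (|\lambda|-1)|d_k|$, and immediately after a $T$-step with nonzero exponent one has $|d_k| \geq (|\lambda|-1)|c_k|$. Both inductive steps reduce, via the reverse triangle inequality, to the elementary inequality $|\lambda|^2 - |\lambda| - 1 \geq (|\lambda|-1)^2$, which is equivalent to $|\lambda| \geq 2$. If $f_n = 0$ the last step is trivial and we read off the state after step $2n-1$ (a $V$-step), giving (i); if $f_n \neq 0$ we read off after step $2n$, giving (ii). An identical induction for the top row, or equivalently an application of the ping-pong of Lemma~\ref{lem:gamma-lambda-is-free} to $\gamma \cdot \infty = a/c$ and $\gamma \cdot 0 = b/d$, yields (iv). For (iii), $|a| \leq |c|$ combined with $ad - bc = 1$ forces $c \neq 0$, and symmetrically $d \neq 0$.

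For (v) I would treat $\lambda$ as a formal indeterminate and track the leading terms of $c_k, d_k$ along the same recursion. Under a $V$-step the new $c$ picks up $e\lambda \cdot d_{\text{old}}$ as its leading term (while $d$ is unchanged), and under a $T$-step with $f \neq 0$ the new $d$ picks up $cf\lambda$ as its leading term (while $c$ is unchanged). Starting from $(c_0, d_0) = (0, 1)$ one finds that $c \in \Z[\lambda]$ has degree $2n - 1$ with leading coefficient $e_1 f_1 \cdots e_{n-1} f_{n-1} e_n$; that $d \in \Z[\lambda]$ has degree $2n$ with leading coefficient $e_1 f_1 \cdots e_n f_n$ if $f_n \neq 0$; and that $d$ has degree $2n - 2$ with leading coefficient $e_1 f_1 \cdots e_{n-1} f_{n-1}$ if $f_n = 0$. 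In all cases both degrees are at least $2n - 2$.

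Part (vi) is the main obstacle. I would first observe that the proof of Lemma~\ref{lem:gamma-lambda-is-free} is valid verbatim for any complex $\lambda$ with $|\lambda| \geq 2$, and that tracking $\gamma \cdot \infty$ and $\gamma \cdot 0$ through the alternating blocks (as in (iii)--(iv)) shows both images lie in $X_\lambda$, hence are finite. Consequently the polynomials $c(\lambda), d(\lambda) \in \Z[\lambda]$ have no zeros in the region $\{\lambda \in \C : |\lambda| \geq 2\}$, so all their complex roots satisfy $|r_j| < 2$. In particular, for real $\lambda \geq 2$, $c(\lambda)$ has constant sign, and factoring $c(\lambda) = A \prod_j (\lambda - r_j)$ yields
\[
\frac{d}{d\lambda} \log|c(\lambda)|^2 = 2\, \real\!\left( \sum_j \frac{1}{\lambda - r_j} \right) = 2 \sum_j \frac{\lambda - \real(r_j)}{|\lambda - r_j|^2} > 0,
\]
since $\lambda \geq 2 > |r_j| \geq |\real(r_j)|$ forces each summand to be strictly positive. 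Hence $|c(\lambda)|$ is strictly increasing on $[2, \infty)$, and the same argument applies to $d(\lambda)$. The essential difficulty is the non-vanishing of $c$ and $d$ on the entire complex region $|\lambda| \geq 2$, which is precisely what the complex-$\lambda$ ping-pong supplies; once this is in hand the sign computation on the logarithmic derivative is immediate.
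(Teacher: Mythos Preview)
Your proof is correct and follows essentially the same strategy as the paper's: induction on the number of elementary factors for (i)--(iv), degree tracking for (v), and for (vi) the observation that (iii) holds for all complex $|\lambda|\ge 2$, forcing the roots of $c(\lambda),d(\lambda)$ into the open disc $|\lambda|<2$. The differences are cosmetic. For (i)--(ii) you carry the stronger invariant $|c|\ge(|\lambda|-1)|d|$ (resp.\ $|d|\ge(|\lambda|-1)|c|$), whereas the paper uses the simpler invariant $|c|\ge|d|$ (resp.\ $|d|\ge|c|$), which already suffices since $|f_n||\lambda|\ge 2$. For (iv) you phrase the argument via the ping-pong orbits $\gamma\cdot\infty=a/c$ and $\gamma\cdot 0=b/d$ landing in $X_\lambda$, while the paper runs a separate induction multiplying from the left; these are equivalent. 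For (vi) the paper invokes the Gauss--Lucas theorem to conclude that $c'$ and $d'$ have no real zeros on $[2,\infty)$, whereas you compute the logarithmic derivative directly; your computation is in fact the standard proof of the relevant special case of Gauss--Lucas, so nothing is gained or lost either way. One small remark: your conclusion ``strictly increasing'' overshoots in the degenerate case $n=1$, $f_1=0$, where $d\equiv 1$ is constant; the paper's statement only claims monotonicity, and handles $n=1$ separately.
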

\begin{proof}
We prove parts (i), (ii), and (iii) simultaneously, using induction on $n$, by 
multiplying on the right with a non-trivial power of $V^{\lambda}$ or $T^{\lambda}$ . 
The base case is $n = 1$, $f_1 = 0$, so $\gamma =V^{e_1 \lambda}$ and 
the inequality in (i) holds trivially and certainly $c_{\gamma}d_{\gamma} \neq 0$. 
For the inductive step, assume $n \geq 2$. If  $f_n \neq 0$, set $\gamma' = \gamma 
T^{-f_n \lambda}$ and if $f_n = 0$, set $\gamma' = \gamma V^{-e_n \lambda}$. Thus, we 
have
\[
\text{either} \quad
\gamma = \gamma' T^{f_n \lambda} = \begin{pmatrix}
\ast & \ast\\
c_{\gamma'} & d_{\gamma'} + f_n \lambda c_{\gamma'}
\end{pmatrix} \quad \text{or} \quad   \gamma = \gamma' V^{e_n \lambda} = \begin{pmatrix}
\ast & \ast\\
c_{\gamma'}+  e_n \lambda d_{\gamma'} & d_{\gamma'} 
\end{pmatrix}.
\]
If $f_n \neq 0$, then  $|c_{\gamma'}| \geq |d_{\gamma'}| >0$  by inductive hypothesis and hence
\[
|d|=|d_{\gamma'} +  \lambda f_n c_{\gamma'}| \geq |f_n||\lambda||c_{\gamma'}|-|d_{\gamma'}| \geq 2|c_{\gamma'}|-|c_{\gamma'}| = |c_{\gamma'}| = |c|>0,
\]
as desired. If $f_n = 0$, then $|d_{\gamma'}| \geq |c_{\gamma'}|>0$ by inductive hypothesis and we deduce $|c| \geq |d| > 0$ in a similar way.  

Part (iv) may be proved by induction on $n$, in the reverse order, that is, by multiplying elements $\gamma$   \emph{from the left} by elements $V^{e \lambda}T^{f \lambda}$, starting with $(e, f)= (e_n, f_n)$ and $\gamma = 1$, then $(e,f) = (e_{n-1},f_{n-1})$ and so on. The proof can be given almost exactly  as in \cite[Lemma 5.2]{S}  in the case $\lambda =2$. 

Part (v) can also be proved by induction on $n$, as parts (i) and (ii). In fact, one has $\deg(c) = \deg(d) + 1$, if $f_n = 0$ and $\deg(d) = \deg(c)  +1$ if $f_n \neq 0$. 

Part (vi) is easily verified for $n = 1$. For $n \geq 2$, note that parts (v) and 
(iii) together imply that the functions $c$ and $d$ are non-constant polynomial 
functions of $\lambda$ (with coefficients in $\Z$, depending upon $e_i, f_i$), all of 
whose complex zeros lie in the disc $|\lambda| < 2$. It follows from the Gauss--Lucas 
theorem that the zeros of their first derivatives also lie in that disc. In 
particular, the derivatives of the polynomials $c$ and $d$ have no \emph{real} zeros 
in $\R \sm (-2, 2)$ and this implies the claim in (vi). 
\end{proof}
As a final preliminary fact, we record the following consequence of Lemma \ref{lem:inequalities-1}:
\begin{equation}\label{eq:a-over-c-upper-upper-bound}
\max{\left(|\gamma z|, |\gamma S z|,|Sz| \right) } \leq 1 + \imag(z)^{-1}, \quad \text{ for all } \gamma \in \mathcal{V}_{\lambda}, z \in \H.
\end{equation}
To see this, note that for $\gamma \in \mathcal{V}_{\gamma}$, we have $c_{\gamma} \neq 0$ and so
\[
|\gamma z| = \left|\frac{a_{\gamma}}{c_{\gamma}}- \frac{1}{c_{\gamma}(c_{\gamma}z + d_{\gamma})} \right| \leq 1 + |c_{\gamma}|^{-2} \imag(z)^{-1} \leq  1+ \imag(z)^{-1},
\]
where the first inequality uses part (iv) and the second uses part (vi) of of Lemma 
\ref{lem:inequalities-1} and the observation that for $\lambda=2$ (iii) implies 
$|c_{\gamma}|\ge1$. The upper bound for $|\gamma S z|$ follows in 
the same way, since $\gamma S  = \begin{psmallmatrix} b_{\gamma} &-a_{\gamma}\\
d_{\gamma} & -c_{\gamma} \end{psmallmatrix}$. 
\subsection{Definition of $a_{k, \lambda, n}$ and $\tilde{a}_{k, \lambda, n}$ via their generating functions}\label{sec:def-of-F-and-tilde-F}
For $r \in \C$ and $z \in \H$, define $\varphi_r(z) = e^{\pi i z r^2}$. Using the subsets $\mathcal{V}_{\lambda}$ defined in \S \ref{sec:special-subsets} and the slash action defined in \S \ref{subsec:slash-action}, we define, for all $\lambda \geq 2$, $k  \in \R$, $z \in \C$, $r \in \C$, the formal series
\begin{equation}\label{eq:def-F-tilde-F-as-series}
F_{k, \lambda}(z,r) := (-1)\sum_{\gamma \in \mathcal{V}_{\lambda}}{(\varphi_r|_k \gamma)(z)}, \qquad \tilde{F}_{k, \lambda}(z,r) := \sum_{\gamma \in \mathcal{V}_{\lambda} \cup \{1 \}}{(\varphi_r|_k \gamma S)(z)}.
\end{equation}
Formally, we clearly have
\begin{equation}\label{eq:S-functional-equation}
F_{k, \lambda}(z,r) + (z/i)^{-k} \tilde{F}_{k, \lambda}(-1/z,r) = \varphi_r(z).
\end{equation}
Moreover, since $\mathcal{V}_{\lambda}T^{\lambda} = \mathcal{V}_{\lambda}$ and since
 \[
(\mathcal{V}_{\lambda} \cup \{1 \}) ST^{\lambda} = ((\mathcal{V}_{\lambda} \cup \{1 \})V^{- \lambda}) S = (\mathcal{V}_{\lambda} \cup \{1 \})S,
\]
both $F_{k, \lambda}(z,r)$ and $\tilde{F}_{k,\lambda}(z,r)$ are $\lambda$-periodic in 
$z$ (at least formally). The next lemma asserts that, for $k >2$, the series 
defined in \eqref{eq:def-F-tilde-F-as-series} converge absolutely and uniformly on compacta and thus show that 
all of these formal identities hold at the level of functions. 
\begin{lemma}\label{lem:convergence}
Fix real $k >0$, $\lambda \geq 2$, $X \geq 1$, $y_0 >0$ and  fix a  compact subset $\Omega \subset \C$. Define $B := \{ z \in \H \,:\,  \imag(z) \geq y_0, |\real(z)| \leq X \}$. There is a constant $C >0$, depending only on $\Omega$ and $y_0$, so that
\begin{equation}\label{eq:uniform-upper-bound-summands}
\sup_{(z,r) \in B \times \Omega}{|(\varphi_r|_k \gamma)(z)|} \leq  C(1 + 1/y)^k(X + 1)^k(c_{\gamma}^2 + d_{\gamma}^2)^{-k/2}\, \text{ for all } \gamma \in \mathcal{V}_{\lambda} \cup \{S \} \cup \mathcal{V}_{\lambda}S.
\end{equation}
If $k >2$,  the series defined in \eqref{eq:def-F-tilde-F-as-series} converge absolutely and uniformly on $B \times \Omega $. 
\end{lemma}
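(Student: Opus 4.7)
The plan is to establish the pointwise bound~\eqref{eq:uniform-upper-bound-summands} first (valid for every $z\in B$ and $r\in\Omega$, with $y:=\imag(z)$) and then sum it over $\gamma$ to deduce uniform absolute convergence when $k>2$.

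For the pointwise estimate I would unfold the slash action as $(\varphi_r|_k\gamma)(z)=j_k(\gamma,z)^{-1}\varphi_r(\gamma z)$ and invoke Lemma~\ref{lem:absolute-value-cocycle} to obtain
\begin{equation*}
|(\varphi_r|_k\gamma)(z)|=|c_\gamma z+d_\gamma|^{-k}\,|\varphi_r(\gamma z)|.
\end{equation*}
The Gaussian factor is harmless: from $|\varphi_r(w)|=e^{-\pi\imag(wr^2)}\le e^{\pi|r|^2|w|}$ together with~\eqref{eq:a-over-c-upper-upper-bound}, which gives $|\gamma z|\le1+1/y_0$ for every $\gamma\in\mathcal{V}_\lambda\cup\{S\}\cup\mathcal{V}_\lambda S$ and $z\in B$, compactness of $\Omega$ yields $|\varphi_r(\gamma z)|\le C(\Omega,y_0)$. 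For the cocycle factor I would use the elementary inequality
\begin{equation*}
|cz+d|^2\ge\frac{y^2}{|z|^2+1}(c^2+d^2)\qquad\bigl(z=x+iy\in\H,\ (c,d)\in\R^2\bigr),
\end{equation*}
verified by observing that the symmetric $2\times2$ matrix of the defect quadratic form has non-negative trace and determinant $y^4/(|z|^2+1)^2$. Since $(|z|^2+1)/y^2=1+(1+x^2)/y^2\le(1+1/y)^2(X+1)^2$ on $B$, raising to the power $-k/2$ yields~\eqref{eq:uniform-upper-bound-summands} for $\gamma\in\mathcal{V}_\lambda$; the cases $\gamma=S$ and $\gamma=\gamma'S$ with $\gamma'\in\mathcal{V}_\lambda$ reduce to this one, using in the latter case that the bottom row of $\gamma$ equals $(d_{\gamma'},-c_{\gamma'})$ so that $c_\gamma^2+d_\gamma^2=c_{\gamma'}^2+d_{\gamma'}^2$.

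For the convergence assertion when $k>2$ I would use the decomposition $\mathcal{V}_\lambda=\mathcal{R}_\lambda\cdot\langle T^\lambda\rangle$ from~\S\ref{sec:special-subsets}: writing $\gamma=\gamma_0 T^{m\lambda}$ one has $c_\gamma=c_{\gamma_0}$ and $d_\gamma=d_{\gamma_0}+m\lambda c_{\gamma_0}$, and comparing the sum in $m\in\Z$ with $\int_{\R}(1+u^2)^{-k/2}du$ gives, for any $k>1$,
\begin{equation*}
\sum_{m\in\Z}\bigl(c_\gamma^2+d_\gamma^2\bigr)^{-k/2}\le C'(k,\lambda)\,|c_{\gamma_0}|^{-k}.
\end{equation*}
It then remains to prove $\sum_{\gamma_0\in\mathcal{R}_\lambda}|c_{\gamma_0}|^{-k}<\infty$ for $k>2$. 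By Lemma~\ref{lem:gamma-lambda-is-free} I would enumerate $\mathcal{R}_\lambda$ by its syllable length $n\ge1$: for $\gamma_0=V^{e_1\lambda}T^{f_1\lambda}\cdots V^{e_n\lambda}$ the recursion
\begin{equation*}
c^{(j+1)}=(1+e_{j+1}f_j\lambda^2)\,c^{(j)}+e_{j+1}\lambda\,d^{(j)},\qquad d^{(j+1)}=f_j\lambda\,c^{(j)}+d^{(j)},
\end{equation*}
combined with the inequality $|d^{(j)}|\le|c^{(j)}|$ from Lemma~\ref{lem:inequalities-1}(i), produces a multiplicative lower bound $|c_{\gamma_0}|\ge c_0\,|e_1|\lambda\prod_{j=1}^{n-1}|e_{j+1}f_j|\lambda^2$ with an absolute constant $c_0>0$ valid for $\lambda\ge2$. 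Summing the $-k$-th power first over the free parameters $e_i,f_j\in\Z\smallsetminus\{0\}$ (contributing a factor of order $(2\zeta(k))^{2n-1}\lambda^{-(2n-1)k}$) and then geometrically in $n$ produces convergence precisely when $k>2$; the sum over $\mathcal{V}_\lambda S$ is handled identically.

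The main obstacle I expect is the calibration of the multiplicative lower bound on $|c_{\gamma_0}|$, particularly at the boundary case $\lambda=2$, where the estimate $|e_{j+1}f_j|\lambda^2-1-|e_{j+1}|\lambda\ge c_0|e_{j+1}f_j|\lambda^2$ is tight for $|e_{j+1}|=|f_j|=1$. Extracting an absolute constant $c_0>0$ uniform in the choices $e_i,f_j$ requires a careful treatment of these extremal syllables; this is essentially the same bookkeeping that underlies the convergence analysis in~\cite[\S4]{S} for the case $\lambda=2$, and I would model the argument on that reference.
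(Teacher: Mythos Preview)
Your treatment of the pointwise bound~\eqref{eq:uniform-upper-bound-summands} is correct and essentially equivalent to the paper's: both control $|\varphi_r(\gamma z)|$ via~\eqref{eq:a-over-c-upper-upper-bound} and then compare $|c_\gamma z+d_\gamma|$ with $(c_\gamma^2+d_\gamma^2)^{1/2}$. Your direct positive-definiteness check for the quadratic form is a clean substitute for the paper's operator-norm argument for the $\R$-linear map $A_z\colon(c,d)\mapsto cz+d$.

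For the convergence step, however, the multiplicative lower bound you write down is false. Take $\lambda=2$ and $\gamma_0=(V^{2}T^{-2})^{n-1}V^{2}\in\mathcal{R}_2$: the bottom row satisfies the affine recursion $(c,d)\mapsto(-3c+2d,\,d-2c)$, which gives $|c_{\gamma_0}|=2n$, whereas $|e_1|\lambda\prod_{j=1}^{n-1}|e_{j+1}f_j|\lambda^2=2\cdot4^{n-1}$. So no absolute constant $c_0>0$ can work. What your recursion together with Lemma~\ref{lem:inequalities-1}(i) actually yields is only
\[
|c^{(j+1)}|\ \ge\ \bigl(|e_{j+1}f_j|\lambda^2-1-|e_{j+1}|\lambda\bigr)\,|c^{(j)}|\ \ge\ \tfrac14\,|e_{j+1}f_j|\lambda^2\,|c^{(j)}|,
\]
i.e.\ a prefactor $4^{-(n-1)}$ rather than $c_0$. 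Feeding that into your sum makes the geometric ratio in $n$ equal to $4^{k}(2\zeta(k))^{2}\lambda^{-2k}$, which at $\lambda=2$ is $4\zeta(k)^{2}>4$ and diverges for every $k$. The ``careful treatment of extremal syllables'' you allude to cannot repair this: the example above shows the product bound itself, not merely its proof, fails.

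The paper's argument sidesteps the syllable analysis entirely. By Lemma~\ref{lem:inequalities-1}(vi) the entries $|c_\gamma|,|d_\gamma|$ are monotone increasing in $\lambda$ (for each fixed word), so it suffices to prove convergence at $\lambda=2$. There $\mathcal{V}_2\subset\Gamma(2)\subset\PSL_2(\Z)$, and one checks that the bottom rows of elements of $\mathcal{V}_2\cup\{S\}\cup\mathcal{V}_2 S$ are pairwise distinct primitive vectors in $\Z^2/\{\pm1\}$; hence $\sum_\gamma(c_\gamma^2+d_\gamma^2)^{-k/2}$ is dominated by the lattice sum $\sum_{(c,d)\in\Z^2\setminus\{0\}}(c^2+d^2)^{-k/2}$, finite exactly for $k>2$. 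Your intermediate reduction to $\sum_{\gamma_0\in\mathcal{R}_\lambda}|c_{\gamma_0}|^{-k}$ is valid, and the quickest way to finish it is this same monotonicity-plus-integrality trick rather than a word-length estimate.
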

\begin{proof}
Let $ \mathcal{W}_{\lambda}:= \mathcal{V}_{\lambda} \cup \{S \} \cup \mathcal{V}_{\lambda}S$. Let $\gamma \in \mathcal{W}_{\lambda}$ and $(z, r) \in B \times \Omega$. By Lemma \ref{lem:absolute-value-cocycle} and the estimate \eqref{eq:a-over-c-upper-upper-bound}, we have 
\begin{equation}\label{eq:uniform-upper-bound-summands-step-1}
|(\varphi_r|_k \gamma)(z)| = |c_{\gamma}z + d_{\gamma}|^{-k} e^{\real( \pi i (\gamma z) r^2)} \leq |c_{\gamma}z + d_{\gamma}|^{-k} e^{\pi (1 + y_0^{-1}) \sup_{r \in \Omega}{|r|^2}}.
\end{equation}
Let $A_z : \C \rightarrow \C$ denote the $\R$-linear map given by $A_z(ci + d) = cz + d$, $c,d \in \R$. Working with the operator norm of $A_{z}^{-1}$ (and using the equivalence of norms on $\End_{\R}(\C)$) we find a universal constant $C_1 >0$ so that \[
(c^2 +d^2)^{1/2}  = |A_z^{-1}(cz + d)| \leq \|A_z^{-1}\| |cz + d|\leq C_1 (X + 1)(1 + 1/y)|cz + d| \quad \text{ for all } (c,d) \in \R^2.
\]
Raising this to the power $k$ and then inserting into \eqref{eq:uniform-upper-bound-summands-step-1} yields \eqref{eq:uniform-upper-bound-summands}. For $k > 2$, uniform and absolute convergence follows now from part (vi) of Lemma \ref{lem:inequalities-1} and the fact that for $\lambda =2$, the set $\{(c_{\gamma}, d_{\gamma}\})_{\gamma \in \mathcal{W}_{\gamma}}$ is a subset of the primitive vectors in $\Z^2$ (modulo $\{ \pm 1 \}$).
\end{proof}

For each $k > 2$, $\lambda \geq 2$, $r \in \C$ and $n \in \Z$ we now define
\begin{align}
a_{k, \lambda,n}(r) := \frac{1}{\lambda}\int_{iy- \lambda/2}^{iy + \lambda/2}{F_{k, \lambda}(z,r) e^{- 2\pi i n z/ \lambda}dz}, \quad \tilde{a}_{k, \lambda,n}(r) := \frac{1}{\lambda}\int_{iy- \lambda/2}^{iy + \lambda/2}{\tilde{F}_{k, \lambda}(z,r) e^{- 2\pi i n z/ \lambda}dz}, \label{eq:def-an}
\end{align}
where $y >0$ can be taken arbitrarily, since $F$ and $\tilde{F}$ are holomorphic and $\lambda$-periodic. 
\begin{lemma}\label{lem:easy-vanishing-and-easy-upper-bound}
For $n \leq 0$, we have $a_{k, \lambda,n} = 0 =  \tilde{a}_{k, \lambda, n}$ and, as $n \rightarrow \infty$, we have
\begin{equation}\label{eq:easy-uniform-upper-bound}
\sup_{r \in \R}{|a_{k, \lambda,n}(r)|} + \sup_{r \in \R}{|\tilde{a}_{k, \lambda,n}(r)|} = O(n^{k}),
\end{equation}
where the implied constant depends only on $k$ and $\lambda$. 
\end{lemma}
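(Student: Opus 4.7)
The plan is to exploit the fact that $a_{k,\lambda,n}(r)$ and $\tilde a_{k,\lambda,n}(r)$, being Fourier coefficients of the holomorphic $\lambda$-periodic functions $F_{k,\lambda}(\cdot,r)$ and $\tilde F_{k,\lambda}(\cdot,r)$, are independent of the height $y > 0$ of the horizontal contour in~\eqref{eq:def-an} (by Cauchy's theorem, applied using the locally uniform convergence of the defining series, cf.\ Lemma~\ref{lem:convergence}). Combined with a uniform bound on these functions along such lines, this freedom of contour will read off both the vanishing and the polynomial growth.

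The key analytic input I would establish is an $r$-uniform bound of the shape
\[
\sup_{r \in \R,\ |x| \le \lambda/2}\bigl(|F_{k,\lambda}(x+iy,r)| + |\tilde F_{k,\lambda}(x+iy,r)|\bigr) \;\le\; C_{k,\lambda}\,(1+1/y)^k, \qquad y > 0,
\]
which follows by revisiting the proof of Lemma~\ref{lem:convergence}. The key observation is that for real $r$ and $w \in \H$ one has $|\varphi_r(w)| = e^{-\pi r^2 \imag(w)} \le 1$, so the factor depending on~$r$ in~\eqref{eq:uniform-upper-bound-summands} disappears and the estimate reduces to the $r$-independent convergent majorant $\sum_\gamma (c_\gamma^2+d_\gamma^2)^{-k/2}$. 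The same reasoning covers $\tilde F_{k,\lambda}$, where for $\gamma\in\mathcal{V}_\lambda$ one has $(c_{\gamma S},d_{\gamma S}) = (d_\gamma,-c_\gamma)$, and the exceptional summand $\gamma = 1$ contributes at most $|z|^{-k}$.

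With this bound the argument splits into three cases. For $n < 0$ I take $y \to +\infty$ in~\eqref{eq:def-an}: the factor $|e^{-2\pi i n(x+iy)/\lambda}| = e^{2\pi n y/\lambda}$ decays exponentially while the integrand remains uniformly bounded, forcing $a_{k,\lambda,n} = \tilde a_{k,\lambda,n} = 0$. For $n = 0$ the exponential prefactor is $1$, so I instead invoke dominated convergence: each summand $(\varphi_r|_k \gamma)(x+iy)$ tends pointwise to $0$ as $y\to\infty$ because $c_\gamma\ne 0$ by part~(iii) of Lemma~\ref{lem:inequalities-1}, and the series is dominated by a summable majorant, giving $a_{k,\lambda,0}(r) = \lim_{y\to\infty}\frac{1}{\lambda}\int F\,dx = 0$; the exceptional $\gamma=1$ term in $\tilde F_{k,\lambda}$ is handled separately via its $|z|^{-k}$ decay. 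Finally, for $n \geq 1$ I reverse the direction of the contour shift and take $y = 1/n$ to get
\[
\sup_{r \in \R}|a_{k,\lambda,n}(r)| \;\le\; e^{2\pi n y/\lambda}\,C_{k,\lambda}(1+1/y)^k \;=\; e^{2\pi/\lambda}\, C_{k,\lambda}(1+n)^k \;=\; O(n^k),
\]
and likewise for $\tilde a_{k,\lambda,n}$. I do not expect a serious obstacle; the only delicate point is the $r$-uniformity of the basic bound, which is exactly what the estimate $|\varphi_r(w)| \le 1$ for real $r$ and $w\in\H$ is designed to deliver.
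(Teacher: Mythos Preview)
Your proposal is correct and follows essentially the same approach as the paper: both use the freedom in the height $y$ of the contour in \eqref{eq:def-an}, the $r$-uniform bound coming from $|\varphi_r(w)|\le 1$ for real $r$ together with the majorant from Lemma~\ref{lem:convergence}, the fact that $c_\gamma\neq 0$ to show the series tend to zero as $y\to\infty$, and the choice $y=1/n$ for the polynomial bound when $n\ge 1$. Your split of the case $n\le 0$ into $n<0$ and $n=0$ is a cosmetic refinement of the paper's unified treatment (the paper simply notes that for all $n\le 0$ the exponential factor is bounded by $1$ and the supremum of $|F_{k,\lambda}|$ tends to zero).
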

\begin{proof}
We prove the assertions for $a_{k, \lambda, n}$, the ones for $\tilde{a}_{k, \lambda, n}$ are proved in the same way. Note that, by the triangle inequality,
\begin{equation}\label{eq:general-upper-bound-a-n-form-triangle-inequality}
|a_{k, \lambda,n}(r)| \leq e^{2 \pi n y/ \lambda}\sup_{|x| \leq \lambda/2}{|F_{k, \lambda}(x + iy, r)|} \quad \text{for all } y > 0.
\end{equation}
If $n \leq 0$, the exponential is bounded by $1$, while the supremum tends to $0$ as $y \rightarrow \infty$. The latter follows from Lemma \ref{lem:convergence} and its proof: we can use uniform convergence to pull the limit inside the series and the fact that $c_{\gamma} \neq 0$ for all $\gamma \in \mathcal{W}_{\lambda}= \mathcal{V}_{\lambda} \cup \{S \} \cup \mathcal{V}_{\lambda}S$. For $n \geq 1$ and $r \in \R$, we again use Lemma \ref{lem:convergence} and its proof (modified by using the trivial bound $|e^{\pi i \tau r^2}| \leq 1$ for $\tau \in \H$, $r \in \R$)  to deduce
 \[
\sup_{|x| \leq \lambda/2, r\in \R}{|F_{k, \lambda}(x + iy, r)|} \lesssim_{k, \lambda } (1 +1/y)^{k}.
 \]
This holds \emph{for all} $y>0$, in particular for $y = 1/n$, which then yields \eqref{eq:easy-uniform-upper-bound}.
\end{proof}
\subsection{Proof of \eqref{eq:interpolation-formula-in-thm} in Theorem \ref{thm:interpolation-theorem-lambda-bigger-2}}
Let $d \geq 5$ be an integer, $k = d/2$, $\lambda \geq 2$ real. We claim that the functions $a_{k, 
\lambda, n}$, $\tilde{a}_{k, \lambda,n}$ defined in \eqref{eq:def-an} are such that 
\eqref{eq:interpolation-formula-in-thm} holds. By their definition, the first 
assertion of Lemma \ref{lem:easy-vanishing-and-easy-upper-bound}, and by 
\eqref{eq:S-functional-equation}, the formula \eqref{eq:interpolation-formula-in-thm} 
holds for $f(x) = \varphi_{|x|}(z)$ for all $z \in \H$ and all $x \in \R^d$. On the 
other hand, from the bound in Lemma \ref{lem:easy-vanishing-and-easy-upper-bound},  
for fixed $x \in \R^d$, the RHS of \eqref{eq:interpolation-formula-in-thm} is 
continuous in $f \in \mathcal{S}_{\text{rad}}(\R^d)$ and so the claimed formula 
follows in general by the density of Gaussians (Proposition 
\ref{prop:density-of-Gaussians}, although we only need the case $n =1$, for which we 
can also cite \cite[Lemma 2.2]{CKMRV}). 
We prove the more precise bounds~\eqref{eq:precise-uniform-upper-upper-bound-in-thm} 
and\eqref{eq:precise-point-wise-upper-bound-in-thm} in Appendix  
\ref{sec:appendix-explicit-upper-bounds}.
\appendix

\section{Proof of the upper bounds 
\eqref{eq:precise-uniform-upper-upper-bound-in-thm} and 
\eqref{eq:precise-point-wise-upper-bound-in-thm} in Theorem 
\ref{thm:interpolation-theorem-lambda-bigger-2}}
\label{sec:appendix-explicit-upper-bounds}
We  will generalize  \cite[Lemma 5.3]{S} and then proceed similarly as in the rest of \cite[\S 5]{S}. For real $\kappa \geq 9/4$ and $\lambda \geq 2$, let us define $\tilde{\mathcal{V}}_{\lambda}:= (\{1 \} \cup \mathcal{V}_{\lambda}) S$ and
\begin{align*}
U_{\kappa, \lambda}(z) := \sum_{\gamma \in \mathcal{V}_{\lambda}}{|c_{\gamma}z + d_{\gamma}|^{- \kappa}}, \qquad 
\tilde{U}_{\kappa, \lambda}(z) :=  \sum_{\tilde{\gamma} \in \tilde{\mathcal{V}}_{\lambda}}{|c_{\tilde{\gamma}}z + d_{\tilde{\gamma}}|^{- \kappa}} = \sum_{\gamma \in \{1 \} \cup \mathcal{V}_{\lambda}}{|d_{\gamma}z-c_{\gamma}|^{-\kappa}}.
\end{align*}
We note that these are both $\lambda$-periodic, continuous functions on $\H$ because of the proof of  Lemma \ref{lem:convergence} and  because both sets $\mathcal{V}_{\lambda}$ and $\tilde{\mathcal{V}}_{\lambda}$ are stable under right multiplication by powers of $T^{\lambda}$. 
\begin{lemma}\label{lem:upper-bounds-U-tilde-U}
There is a constant $C_0 > 0$ so that for all $z = x + iy \in \H$, all $\lambda \geq 2$ and all $\kappa \geq 9/4$,
\[
\max{\left(|U_{\kappa, \lambda}(x + iy)|,|\tilde{U}_{\kappa, \lambda}(x + iy)| \right)}  \leq C_0 2^{\kappa}(y^{-\kappa/2} + y^{- \kappa}).
\]
\end{lemma}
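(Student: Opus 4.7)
The plan is to decompose each of $U_{\kappa,\lambda}$ and $\tilde{U}_{\kappa,\lambda}$ according to cosets of the translation subgroups $\langle T^\lambda\rangle$ and $\langle V^\lambda\rangle$ respectively (using the representative sets $\mathcal{R}_\lambda$ and $\tilde{\mathcal{R}}_\lambda$ of \S\ref{sec:special-subsets}), to bound each inner sum by elementary integral comparison, and then to handle the remaining outer sum over coset representatives by appealing to the monotonicity of $|c_\gamma|$ and $|d_\gamma|$ in $\lambda\ge 2$ (Lemma~\ref{lem:inequalities-1}(vi)) to reduce to the case $\lambda=2$.

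For $U_{\kappa,\lambda}$, right multiplication by $T^{n\lambda}$ preserves $c_\gamma$ and sends $d_\gamma$ to $d_\gamma+n\lambda c_\gamma$, so I would write
\[
U_{\kappa,\lambda}(z)=\sum_{\gamma'\in \mathcal{R}_\lambda}|c_{\gamma'}|^{-\kappa}\sum_{n\in\Z}\bigl|z+d_{\gamma'}/c_{\gamma'}+n\lambda\bigr|^{-\kappa}.
\]
The inner sum is $\lambda$-periodic in the real part of its argument, so after reducing to $|\real(\cdot)|\le\lambda/2$ it takes the form $\sum_n((r+n\lambda)^2+y^2)^{-\kappa/2}$ with $|r|\le\lambda/2$. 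I would bound the $n=0$ term by $y^{-\kappa}$, use $|r+n\lambda|\ge(|n|-\tfrac12)\lambda$ for $|n|\ge 1$, and compare the resulting tail with $\int_0^\infty(t^2\lambda^2+y^2)^{-\kappa/2}\,dt=B_\kappa y^{1-\kappa}/(2\lambda)$, where $B_\kappa:=\int_{\R}(s^2+1)^{-\kappa/2}\,ds$ is decreasing (hence bounded) in $\kappa\ge 9/4$. This gives a bound $\lesssim y^{-\kappa}+B_\kappa y^{1-\kappa}/\lambda$ for the inner sum, uniformly in $\real(z)$.

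For the outer sum, since the parameterization $(e_1,f_1,\dots,e_n,f_n)$ of elements of $\mathcal{R}_\lambda$ is independent of $\lambda$, Lemma~\ref{lem:inequalities-1}(vi) gives
\[
\sum_{\gamma'\in\mathcal{R}_\lambda}|c_{\gamma'}|^{-\kappa}\;\le\;\sum_{\gamma'\in\mathcal{R}_2}|c_{\gamma'}|^{-\kappa}.
\]
Now $\mathcal{R}_2$ injects into the set of non-trivial cosets of $\langle T^2\rangle$ in $\Gamma(2)$, and the latter are parametrized by pairs $(c,d)$ with $c>0$ even, $0\le d<2c$, and $\gcd(2c,d)=1$, giving $\phi(2c)\le 2c$ choices of $d$ per $c$. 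Hence the $\lambda=2$ sum is at most $\sum_{c>0\text{ even}}2c\cdot c^{-\kappa}=2^{2-\kappa}\zeta(\kappa-1)\le 2^{2-\kappa}\zeta(5/4)$. Combined with the elementary inequality $y^{1-\kappa}\le y^{-\kappa}+y^{-\kappa/2}$ (easily verified for $\kappa\ge 2$ by splitting into $y\le 1$ and $y\ge 1$) and with $\lambda\ge 2$, this yields an estimate of the shape $C(\kappa)(y^{-\kappa/2}+y^{-\kappa})$ with $C(\kappa)$ bounded uniformly for $\kappa\ge 9/4$, which is certainly bounded by $C_0\,2^{\kappa}(y^{-\kappa/2}+y^{-\kappa})$ for an absolute constant $C_0$.

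The argument for $\tilde{U}_{\kappa,\lambda}$ is formally identical after swapping the roles of $c,d$ and of $T^\lambda,V^\lambda$: one decomposes $\{1\}\cup\mathcal{V}_\lambda$ into $\langle V^\lambda\rangle$-cosets via $\tilde{\mathcal{R}}_\lambda$ (using that right multiplication by $V^{n\lambda}$ preserves $d_\gamma$ and shifts $c_\gamma$ by $n\lambda d_\gamma$), invokes Lemma~\ref{lem:inequalities-1}(ii) to ensure $d_{\gamma'}\neq 0$ on the non-identity part of $\tilde{\mathcal{R}}_\lambda$ (while the identity representative contributes the single extra coset-sum $\sum_n|z-n\lambda|^{-\kappa}$, which is handled by the same inner-sum estimate), and then reduces $\sum|d_{\gamma'}|^{-\kappa}$ to the case $\lambda=2$ by monotonicity, where the same counting in $\Gamma(2)$ applies with the roles of the entries interchanged. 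I do not foresee a serious obstacle; the main conceptual point is that Lemma~\ref{lem:inequalities-1}(vi) cleanly decouples the outer sum from the inner one, which is exactly what delivers a bound uniform in both $\real(z)$ (by periodicity of the inner sum) and in $\lambda\ge 2$ (by monotonicity).
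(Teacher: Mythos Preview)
Your proposal is correct and follows essentially the same route as the paper: coset decomposition of the sum via $\mathcal{R}_\lambda$ and $\tilde{\mathcal{R}}_\lambda$, a uniform bound on the inner translation sum, and reduction of the outer sum to $\lambda=2$ via the monotonicity of Lemma~\ref{lem:inequalities-1}(vi). The only differences are tactical: the paper estimates the inner sum using the inequality $|z+\cdots|^2\ge 2y|\real(z)+\cdots|$ (so that $y^{-\kappa/2}$ appears directly, together with a $\zeta(\kappa/2)$ factor), whereas you use integral comparison to get $y^{1-\kappa}$ and then split $y^{1-\kappa}\le y^{-\kappa}+y^{-\kappa/2}$; and for the outer sum the paper passes through $(c_\gamma^2+d_\gamma^2)^{-\kappa/2}$ over primitive integer vectors, while you count $\langle T^2\rangle$-cosets in $\Gamma(2)$ and sum $|c_\gamma|^{-\kappa}$ directly. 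Your variant in fact yields a constant of order $2^{-\kappa}$ rather than $2^{\kappa}$, which is harmless here since the stated bound only asks for $C_0\,2^{\kappa}$.
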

\begin{proof}
By $\lambda$-periodicity, it suffices to consider $z = x +iy \in \H$ with $|x| \leq 
\lambda/2$. We start with the analysis of  $\tilde{U}_{k, \lambda}(x + iy)$ and 
explain the modifications for $U_{k, \lambda}$ at the end. We divide the series into 
subseries over orbits of right multiplication by $T^{\lambda}$. For this, recall from 
\S \ref{sec:special-subsets} the definition of the set 
$\tilde{\mathcal{R}}_{\lambda}$ and then note that
\begin{align*}
\tilde{U}_{\kappa, \lambda}(z) &= \sum_{ \gamma \in \tilde{ \mathcal{R}}_{\lambda}}{ \sum_{e \in \Z}{|d_{\gamma}z-(c_{\gamma}+e \lambda d_{\gamma})|^{-\kappa}}} = \sum_{ \gamma \in \tilde{ \mathcal{R}}_{\lambda}}{|d_{\gamma}|^{- \kappa} \sum_{e \in \Z}{|z-(c_{\gamma}/d_{\gamma}+e \lambda )|^{-\kappa}}}.
\end{align*}
For all $e \in \Z$, we have 
\[
|z + (c_{\gamma}/d_{\gamma}+e \lambda  )|^2 = y^2 + (\lambda e + x + c_{\gamma}/d_{\gamma})^2 \geq y^2.
\]
By part (ii) of Lemma \ref{lem:inequalities-1}, we have  $|c_{\gamma}/d_{\gamma}| \leq 1$ for all $\gamma \in \tilde{\mathcal{R}}_{\lambda}$ and therefore, for $|e| \geq 2$,
\begin{align*}
|z + (c_{\gamma}/d_{\gamma}+e \lambda  )|^2 &\geq 2y|\lambda e + x + c_{\gamma}/d_{\gamma}| 
\geq 2y \left( \lambda |e|- \lambda/2-1 \right) \\
&\geq 2 y \lambda \left( |e|- (1/2 + 1/ \lambda) \right) 
\geq 2 y \lambda (|e|-1).
\end{align*}
Using these lower bounds, we obtain
\begin{align*}
\tilde{U}_{\kappa, \lambda}(z) & \leq \sum_{\gamma \in \tilde{ \mathcal{R}}_{\lambda}}{ \frac{1}{(d_{\gamma}^2)^{\kappa/2}} \left( 3y^{- \kappa} + \sum_{|e| \geq 2}{\frac{1}{ \left( 2 y \lambda (|e|-1) \right)^{\kappa/2}}} \right)} \\
&\leq \sum_{\gamma \in \tilde{ \mathcal{R}}_{\lambda}}{ \frac{1}{((c_{\gamma}^2 + 
d_{\gamma}^2)/2)^{\kappa/2}} \left( 3 y^{- \kappa} + (2 \lambda y)^{- \kappa/2} 2 
\zeta(\kappa/2)\right)   }, 
\end{align*}
where we used that $c_{\gamma}^2 \leq d_{\gamma}^2$ for $\gamma \in \tilde{\mathcal{R}}_{\lambda}$.  Since $\kappa \geq 9/4>2$ and $\lambda \geq 2$, the claimed upper bound for $\tilde{U}_{\kappa, \lambda}$ follows by appealing once again to part (vi) of Lemma \ref{lem:inequalities-1} and arguing as at the end of the proof of Lemma \ref{lem:convergence}. 

To treat $U_{\kappa, \lambda}$, we also split the sum over $\mathcal{V}_{\lambda}$ into orbits modulo $\langle T^{\lambda} \rangle$ and use instead the set of representatives $\mathcal{R}_{\lambda}$ defined in \S \ref{sec:special-subsets} and correspondingly part (i) of Lemma \ref{lem:inequalities-1}.   
\end{proof}
Lemma \ref{lem:upper-bounds-U-tilde-U} together with the trivial bound $|e^{\pi i \tau r^2}| \leq 1$, valid for all $\tau \in \H$, $r \in \R$, implies that for \emph{all} $\lambda \geq 2$, $k \geq 5/2$, $x \in \R$, $y>0$, $r \in \R$, 
\begin{align*}
|F_{k, \lambda}(x+iy,r)| &\leq U_{k, \lambda}(x + iy) \leq C_0 2^k (y^{-k} + y^{- k/2} )
\end{align*}
for some absolute constant $C_0$, not depending on $k, \lambda, x, y$ or $r $ and that the same holds with the tilde. If we insert this into the general bound \eqref{eq:general-upper-bound-a-n-form-triangle-inequality} for integers $n \geq 1$ and set $y= \tfrac{k }{ \pi n}$ we obtain (after a short computation) \eqref{eq:precise-uniform-upper-upper-bound-in-thm} in Theorem \ref{thm:interpolation-theorem-lambda-bigger-2} for the functions $a_{k, \lambda,n}$ (the analysis for $\tilde{a}_{k, \lambda,n}$ is the same). 

To prove the remaining bound \eqref{eq:precise-point-wise-upper-bound-in-thm}, we assume that $r > 0$. Then, for some  $\beta >0$ to be determined, we write
\begin{align*}
|F_{k, \lambda}(z,r)| & \leq \sum_{\gamma \in \mathcal{V}_{\lambda}}{|c_{\gamma}z + d_{\gamma}|^{-k} e^{- \pi  \imag(\gamma z) r^2} \imag(\gamma z)^{\beta} \imag(\gamma z)^{- \beta}}\\
& \leq \sum_{\gamma \in \mathcal{V}_{\lambda}}{|c_{\gamma}z + d_{\gamma}|^{-k} \left( \frac{ \beta}{ \pi e r^2} \right)^{\beta} \imag(\gamma z)^{- \beta}} \leq \left( \frac{ \beta}{ \pi e r^2} \right)^{\beta} \imag(z)^{-\beta} U_{k-2 \beta, \lambda}(z).
\end{align*} 
We take $\beta = k/2-9/8$, so that we can apply Lemma \ref{lem:upper-bounds-U-tilde-U} with $\kappa = 9/4$ and so that
\[
|F_{k, \lambda}(z,r)|  \leq \left( \frac{\beta}{\pi e} \right)^{\beta}r^{-2 \beta} y^{-\beta} U_{9/8, \lambda}(z)  \leq  C_1 \left( \frac{\beta}{\pi e} \right)^{\beta}r^{-2 \beta}  (y^{-( \beta + 9/4)} + y^{-(9/8+ \beta)}),
\]
for some absolute constant $C_1 > 0$. We may now use this upper bound in the general 
estimate \eqref{eq:general-upper-bound-a-n-form-triangle-inequality} for integers $n 
\geq 1$ and set $y= \tfrac{\beta }{ \pi n}$ to obtain 
\eqref{eq:precise-point-wise-upper-bound-in-thm} in Theorem 
\ref{thm:interpolation-theorem-lambda-bigger-2} (the analysis for $\tilde{a}_{k, 
\lambda,n}$ is the same). 
\section{Removing finitely many interpolation nodes}\label{sec:appendix-removal-nodes}
Here, we prove the modification of Theorem \ref{thm:interpolation-theorem-lambda-bigger-2} explained in the proof of Corollary \ref{cor:non-radial-uniqueness-corollary-lambda-bigger-two}. We assume that $\lambda >2$ throughout this section. We first reformulate our problem by decomposing \eqref{eq:interpolation-formula-in-thm} into Fourier eigenspaces so that we can work with modular forms on the bigger group $H(\lambda) \supset \Gamma(\lambda)$. This is convenient since $H(\lambda)$ has only one cusp, but it requires some additional notation and preliminary explanation.

For $\epsilon  \in \{ \pm 1\}$, let $\chi_{\epsilon}: H(\lambda) \rightarrow \{ \pm 1 \}$ denote the group homomorphisms satisfying $\chi_{\epsilon}(S) = \epsilon$ and $\chi_{\epsilon}(T^{\lambda}) = 1$. We twist the slash action defined in \S \ref{subsec:slash-action} by the character $\chi_{\epsilon}$ by defining $f|_k^{\epsilon} \gamma = \chi_{\epsilon}(\gamma) j_k(\gamma)^{-1} \cdot (f \circ \gamma)$ for $\gamma \in H(\lambda)$ and functions $f$ on $\H$.  For $k \in \R$, let $M_k(\lambda, \epsilon)$ denote the space of all holomorphic functions $f: \H \rightarrow \C$ which satisfy $f|_k^{\epsilon}\gamma = f$ for all $\gamma \in H(\lambda)$ and which admit a Fourier expansion of the form $f(z) = \sum_{n=0}^{\infty}{b_n e^{\pi i (2n/ \lambda) z}}$ with polynomially growing Fourier coefficients: $b_n = O(n^c)$ for some $c = c(f)  \geq 0$.

Recall that we constructed $F_{k, \lambda}(z,r), \tilde{F}_{k, \lambda}(z,r)$ which are holomorphic and $\lambda$-periodic in $z$ and satisfy \eqref{eq:S-functional-equation}. For $\epsilon \in \{ \pm 1 \}$ we define $F_{k, \lambda}^{\epsilon}(z,r)$ by

\begin{equation}\label{eq:F-in-terms-of-B}
\begin{pmatrix}
F_{k, \lambda}^{+}\\
F_{k, \lambda}^{-}
\end{pmatrix} = \begin{pmatrix}
1 & -1\\
1 & 1
\end{pmatrix} \begin{pmatrix}
F_{k, \lambda}\\
\tilde{F}_{k, \lambda}
\end{pmatrix} \quad \Leftrightarrow \quad \begin{pmatrix}
F_{k, \lambda}\\
\tilde{F}_{k, \lambda}
\end{pmatrix} = \frac{1}{2} \begin{pmatrix}
1 & 1\\
-1 & 1
\end{pmatrix} \begin{pmatrix}
F_{k, \lambda}^{+}\\
F_{k, \lambda}^{-}
\end{pmatrix} . 
\end{equation} 
Then each $F_{k, \lambda}^{\epsilon}(z,r)$ is $\lambda$-periodic in $z$ and we have, by \eqref{eq:S-functional-equation}
\begin{equation}\label{eq:functional-equation-S-with-signs}
F^{\epsilon}_{k, \lambda}(\cdot, r)|_k^{\epsilon}(1-S) = \varphi_r|_k^{\epsilon}(1-S).
\end{equation}
In fact, \eqref{eq:functional-equation-S-with-signs} and \eqref{eq:S-functional-equation} are equivalent.
For $n \in \Z$ we define (note the sign change) \[
b_{k, \lambda,n}^{\epsilon}(r) := \frac{1}{\lambda} \int_{iy- \lambda/2}^{iy + \lambda/2}{F_{k, \lambda}^{-\epsilon}(z,r) e^{- \pi i (2n/ \lambda)z}dz},
\]
so that, by \eqref{eq:F-in-terms-of-B}, we have 
\begin{equation}\label{eq:b-in-terms-of-a}
\begin{pmatrix}
b_{k, \lambda}^{+}\\
b_{k, \lambda}^{-}
\end{pmatrix} = \begin{pmatrix}
1 & 1\\
1 & -1
\end{pmatrix} \begin{pmatrix}
a_{k, \lambda}\\
\tilde{a}_{k, \lambda}
\end{pmatrix} \quad \Leftrightarrow \quad \begin{pmatrix}
a_{k, \lambda}\\
\tilde{a}_{k, \lambda}
\end{pmatrix} = \frac{1}{2} \begin{pmatrix}
1 & 1\\
1 & -1
\end{pmatrix} \begin{pmatrix}
b_{k, \lambda}^{+}\\
b_{k, \lambda}^{-}
\end{pmatrix}.
\end{equation} 
For any $\phi^{\epsilon} \in M_k(\lambda, \epsilon)$ we can replace $F_{k, \lambda}^{\epsilon}(z,r)$ by $F_{k, \lambda}^{\epsilon}(z,r)-\phi^{\epsilon}(z)$ and these functions will still be holomorphic, $\lambda$-periodic, satisfy the functional equation \eqref{eq:functional-equation-S-with-signs} and have polynomially bounded Fourier coefficients. In particular, we can take for $\phi^{\epsilon}$ any linear combination of functions $b_{k,n}^{-\epsilon}(r)\phi_n^{\epsilon}(z)$ for suitable $\phi_n^{\epsilon}$. We may then redefine $F, \tilde{F}$ in terms of such modified $F^{+}, F^{-}$ via \eqref{eq:F-in-terms-of-B} and the (new) Fourier coefficients of $F$ and $\tilde{F}$ will still satisfy the interpolation formula \eqref{eq:interpolation-formula-in-thm} with uniform convergence. 

Thus, the proof of the remaining part of Corollary \ref{cor:non-radial-uniqueness-corollary-lambda-bigger-two} is reduced to the  proof of the following proposition. Indeed, given any integer $N \geq 1$, we can use the functions $f_n^{\epsilon}$, $1 \leq n \leq N$ provided by the proposition and linearly combine them to create  $\phi_n^{\epsilon} \in M_k(\lambda, \epsilon)$ such that $\widehat{\phi_n^{\epsilon}}(0) = 0$ and $\widehat{\phi_n^{\epsilon}}(m) = \delta_{n,m}$ for all $n,m \in \{1, \dots, N \}$ (where the hat-notation means Fourier coefficient).
\begin{proposition}\label{prop:cusp-forms}
Fix $k > 0$, $\lambda >2$ and $\epsilon \in \{ \pm 1 \}$. Then, for every integer $n \geq 1$, there exists $f_n^{\epsilon} \in M_k(\lambda, \epsilon)$ vanishing to order exactly $n$ at infinity.
\end{proposition}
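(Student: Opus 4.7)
The plan is to construct the forms $f_n^\epsilon$ as suitable linear combinations of Poincar\'e series for the Hecke group $H(\lambda)$. For $n \geq 1$ and (initially) $k > 2$, I would define
\[
P_{k, n}^{\epsilon}(z) := \sum_{\gamma \in \Gamma_\infty \backslash H(\lambda)} \chi_{\epsilon}(\gamma)\, j_k(\gamma, z)^{-1}\, e^{2 \pi i n \gamma z / \lambda},
\]
where $\Gamma_\infty := \langle T^\lambda \rangle$. Lemma \ref{lem:absolute-value-cocycle} combined with estimates modelled on Lemma \ref{lem:convergence} (using Lemma \ref{lem:inequalities-1}(vi) to control the $c$-values of coset representatives) gives absolute, locally uniform convergence. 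The sum is manifestly $T^\lambda$-periodic and, by reindexing $\gamma \mapsto \gamma S$ in the $S$-translate (using $S^2 = 1$, $\chi_\epsilon(S) = \epsilon$, and the cocycle identity for $j_k$), satisfies $P_{k,n}^\epsilon|_k^\epsilon S = P_{k,n}^\epsilon$. As $\imag(z) \to \infty$ each summand tends to zero (the $\gamma \in \Gamma_\infty$ term because $n \geq 1$; the others because $c_\gamma \neq 0$), so by dominated convergence the constant Fourier coefficient vanishes. Hence $P_{k,n}^\epsilon$ is a cusp form in $M_k(\lambda,\epsilon)$.

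Next I would extract $f_n^\epsilon$ from these series. For each $N \geq n$ I would seek coefficients $(\alpha_i)_{1 \leq i \leq N}$ such that $f_n^\epsilon := \sum_{i=1}^N \alpha_i P_{k,i}^\epsilon$ satisfies $b_m(f_n^\epsilon) = \delta_{mn}$ for $1 \leq m \leq N$. This linear system is solvable precisely when the $N \times N$ Fourier coefficient matrix $B^{(N)} := \bigl(b_j(P_{k,i}^\epsilon)\bigr)_{1 \leq i, j \leq N}$ is invertible, and its solution then automatically has vanishing order exactly $n$ at infinity.

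The main obstacle is thus the invertibility of $B^{(N)}$. A standard Petersson-type calculation expands $b_j(P_{k,i}^\epsilon) = \delta_{ij} + r_{ij}$, where the corrections $r_{ij}$ are Kloosterman--Bessel-type sums whose summands decay like $|c|^{-k}$ in the summation parameter $c$ (by Lemma \ref{lem:inequalities-1}(vi)). For $k > 2$ this yields invertibility of $B^{(N)}$ for all $N$ large enough: fixing $n$, we choose $N \geq \max(n, N_0)$ and obtain $f_n^\epsilon$ as above. Hecke's theorem on infinite-dimensionality of $M_k(\lambda, \epsilon)$ \cite[\S 3]{H1} plays a background role here, ensuring that the $P_{k,n}^\epsilon$ span an infinite-dimensional subspace; combined with the observation that $\bigcap_m V_m = 0$ in $M_k(\lambda,\epsilon)$ (a $\lambda$-periodic holomorphic function with vanishing Fourier expansion is identically zero), where $V_m := \{f : b_j(f) = 0,\; 0 \leq j < m\}$, this rules out accidental degeneracy of the Fourier coefficient matrix on the Poincar\'e subspace.

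For the remaining range $0 < k \leq 2$, the Poincar\'e series above no longer converges absolutely, so one extends the construction by Hecke's regularization (inserting a convergence factor $|j_k(\gamma,z)|^{-2s}$, building the series for $\real(s)$ large, and analytically continuing in $s$), or alternatively takes a weight-$(k+k_0)$ form of order $n$ (constructed as above for some $k_0 > 2-k$) and divides by a fixed nowhere-vanishing modular form of weight $k_0$.
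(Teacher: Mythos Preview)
Your approach via Poincar\'e series is genuinely different from the paper's, which follows Hecke and builds a Hauptmodul: a biholomorphism $h$ from half the fundamental domain to $\H$, extended by Schwarz reflection to an $H(\lambda)$-invariant function on $\H$ with $h(i\infty)=1$ and $h\neq 1$ elsewhere. From $h$ one gets a square root $H$ and a nowhere-vanishing weight-$2$ form $g=h'/(H(h-1))$, and then sets $f_n^{\epsilon}=H^{(1-\epsilon)/2}g^{k/2}(h-1)^n$ for \emph{all} $k>0$ simultaneously. Hecke had this for $n\ge k/2$; the paper's added observation is a symmetry $\overline{h(\overline{\tau})}=a_0/h(\tau)$ forcing $|h-1|$ to be bounded away from $0$ on the part of $\mathcal{F}_\lambda$ with $\imag(\tau)\le 2$, which is exactly what is needed to control $(h-1)^{n-k/2}$ in \eqref{eq:use-of-f-star} when $n<k/2$.

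Your argument, by contrast, has a real gap at the step ``invertibility of $B^{(N)}$ for all $N$ large enough''. The decomposition $b_j(P_{k,i}^\epsilon)=\delta_{ij}+r_{ij}$ does not by itself make $B^{(N)}$ a small perturbation of the identity: the smallest nonzero $|c|$ in $H(\lambda)$ is $1$ (from $S$), so the Kloosterman--Bessel corrections are not uniformly small in $i,j$. In the cofinite setting one rescues this with the Petersson unfolding $\langle f,P_n\rangle=c_k\, b_n(f)$, but for $\lambda>2$ the quotient $H(\lambda)\backslash\H$ has infinite hyperbolic area and that inner product does not converge. Your fallback ``infinite-dimensionality of $M_k(\lambda,\epsilon)$ plus $\bigcap_m V_m=0$'' only yields $V_m\supsetneq V_{m+1}$ for infinitely many $m$, not for every $m\ge 1$; nothing rules out, say, $P_{k,1}^\epsilon$ having $b_1=0$ and no other $P_{k,i}^\epsilon$ compensating, leaving $V_1=V_2$. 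Finally, for $0<k\le 2$ your suggestion to divide by a nowhere-vanishing weight-$k_0$ form presupposes exactly the kind of object ($g^{k_0/2}$ in Hecke's construction) that the paper builds directly, so that route is not self-contained.
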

Proposition \ref{prop:cusp-forms} is essentially due to Hecke \cite[\S 3]{H1} who proved the existence of such $f_n^{\epsilon}$ for all integers $n \geq k/2$. We will add a further observation (below near \eqref{eq:equality-of-conformal-maps}) to his proof and show that the construction extends to all $n \geq 1$. Hecke's treatment in loc. cit. is somewhat brief and we refer to~\cite[Chapter~4]{BK} for more details and explanation, also for parts of the proof given below.
\begin{proof}[Proof of Proposition \ref{prop:cusp-forms} ]
Let $B_1 := \{ z \in \C \,:\, - \lambda/2 < \real(z) <0, |z| >1 \}$, so that $B_1 
\cap \H$ is the left half of the fundamental domain~$\mathcal{F}_{\lambda}$ drawn in 
Figure~\ref{fig:Hlambdadomain}. Consider the following pieces of the boundary 
of~$B_1$:
 \[
L_1 = - \lambda/2 + i \R, \quad L_2 = i[1, \infty), \quad L_3 = \{ z \in \C \,:\, \real(z) < 0, |z| = 1 \}, \quad L_4 = i(- \infty, -1].
\]
By the Riemann mapping theorem, there exists a biholomorphic map $h : B_1 \rightarrow \H$. It may be chosen uniquely so that it extends continuously to the boundary of $B_1$ (minus the point $-i$), maps the latter to $\R$ and satisfies
\begin{equation}\label{eq:values-of-h-at-boundary-points}
h(i) =0, \quad h(-i) = -\infty, \quad h(i \infty)  = 1, \quad h(-i \infty) = a_0,
\end{equation}
for some $a_0 > 1$, where the values at $\pm i \infty$ are understood in the limit $\imag(\tau) \rightarrow \pm \infty$. We then have \[
h(L_1) = (1, a_0),  \quad  h(L_2) = [0, 1),  \quad h(L_3) = (- \infty, 0) \quad  \text{and}  \quad   h(L_4) = (a_0, \infty).
\] 
By the Schwarz reflection principle applied to $L_1$, $L_2$, and $L_3$, one may extend $h$ to an analytic function on $\C$ minus the set of points equivalent to $-i$ under the reflections just mentioned. Then $h|_{\H}$ is bounded, $H(\lambda)$-invariant and never takes the value $1$. 

We claim that there is $\delta > 0$ so that for all $\tau \in B_1$ with $|\imag(\tau)| \leq 2$ we have $|g(\tau)-1| \geq \delta$. To prove this, it suffices to show that for all $\tau \in B_1$ we have
\begin{equation}\label{eq:equality-of-conformal-maps}
\overline{h(\overline{\tau})} = \frac{a_0}{h(\tau)},
\end{equation}
because if we specialize the above to $\tau \in \R \cap B_1$, we get $|h(\tau)|^2 = a_0 > 1$ and can then use continuity of $h$ to prove the claim.  To prove \eqref{eq:equality-of-conformal-maps}, we note that both sides define biholomorphic mappings $B_1 \rightarrow  \{ z \in \C \,:\, \imag(z) < 0 \}$  and that they extend in the same way to the boundary points $\tau =  \pm i, \pm i \infty$.
Now Hecke proves the existence of a holomorphic function $H: \H \rightarrow \C$ satisfying
\[
h(\tau + \lambda) = H(\tau), \quad H(-1/ \tau) = -H(\tau), \quad H(\tau)^2 = h(\tau)
\]
for all $\tau \in \H$ and then  considers 
\[
g(\tau) := \frac{h'(\tau)}{H(\tau)(h(\tau)-1)},
\]
which is holomorphic and nowhere vanishing on $\H \cup \{i \infty\}$ and transforms like a modular form in $M_2(\lambda, +1)$ (we again refer to \cite[Ch. 4]{BK} for justification and details).  Using a suitable logarithm of $g$, Hecke defines  \[
f_n^{\epsilon}(\tau):= H(\tau)^{\tfrac{1-\epsilon}{2}}g(\tau)^{k/2} (h(\tau)-1)^n
\]
and proves that $f_n^{\epsilon} \in M_k(\lambda, \epsilon)$ for $n \geq k/2$. Note that since $h(\tau)-1$ vanishes to order $1$ at $i \infty$ while $H$ and $g$ are non-vanishing at $i \infty$, each $f_n^{\epsilon}$ indeed vanishes to order exactly $n$ at $i \infty$. It remains to be shown that $f_n^{\epsilon}$ belongs to $M_k(\lambda,\epsilon)$ \emph{for all} $n \geq 1$. For this,  it suffices to show that the $H(\lambda)$-invariant, continuous function $|f_n^{\epsilon}(\tau)|\imag(\tau)^{k/2}$ is bounded on the fundamental domain $\mathcal{F}_{\lambda}$.

For $\tau \in \mathcal{F}_{\lambda}$ with $\imag(\tau) \geq 2$ we have $g(\tau)-1 = O(e^{-(2 \pi/ \lambda) \imag(\tau)})$  while $g(\tau)^{k/2}$ and $H(\tau)$ are both $O(1)$. For $\tau \in \mathcal{F}_{\lambda}$ with $\imag(\tau) \leq 2$, we write
\begin{equation}\label{eq:use-of-f-star}
|f_n^{\epsilon}(\tau)|\imag(\tau)^{k/2} = |h(\tau)|^{\frac{1-\epsilon}{4}}|h(\tau)-1|^{n-k/2}|f^{\ast}(\tau)|^{k/2}, \quad \text{where} \quad f^{\ast}(\tau) := \imag(\tau)|h'(\tau)/H(\tau)|.
\end{equation}
The function $f^{\ast}: \H \rightarrow \R_{\geq 0}$ is easily seen to be bounded (see \cite[Ch. 4, p. 31]{BK}) and since we showed that $|h(\tau)-1|$ is bounded away from zero for $\tau \in \mathcal{F}_{\lambda}$ with $\imag(\tau) \leq 2$ and since we know that $h$ is bounded on $\H$,  we are done. 
\end{proof} 
\section{Proof of Proposition \ref{prop:density-of-Gaussians}}\label{sec:appendix-density-gaussians}
Recall that $n \geq 1$ and $d_j \geq 1$ are integers such that $d = d_1 + \dots + d_n$ and that we view $\R^d  =\prod_{j=1}^{n}{\R^{d_j}}$. Abbreviate $H := \Or(d_1) \times \dots \times \Or(d_n) \hookrightarrow \Or(d)$. We need to show that the linear span $W \subset \Ss(\R^d)^H$,  of all Gaussians 
\[
g(z)(x)  = e^{\pi i \sum_{j=1}^{n}{ z_j |x_j|^2}}, \quad z \in \H^n, x_j \in \R^{d_j},
\]
is dense in $\Ss(\R^d)^{H}$. As a matter of notation, we will write $g(z)(x)  =
g(z,x) = g_z(x)$ in this proof.

By adapting the proof of the fact that $C_{c}^{\infty}(\R^n)$ is dense in
$\Ss(\R^n)$, one may show that $C_c^{\infty}(\R^d)^{\Or(d)}$ is dense in
$\Ss(\R^d)^{H}$. In particular the larger space $C_c^{\infty}(\R^d)^{H}$ is dense in
$\Ss(\R^d)^{H}$.


We now fix $f \in C_c^{\infty}(\R^d)^H$ and aim to show that $f \in \overline{W}$. 
Fix positive reals $b_1, \dots, b_n > 0$ and consider the function
 \[
h(x) := f(x) e^{\pi \sum_{j=1}^{n}{b_j |x_j|^2}} , \quad x = (x_1, \dots, x_n) \in \R^d, \quad x_j \in \R^{d_j}.
 \]
Then $h \in C_c^{\infty}(\R^d)^H$. We claim that there exists a function $\eta \in 
C_c^{\infty}(\R^n)$ such that
\begin{equation}\label{eq:h-in-terms-of-eta}
h(x) = \eta(|x_1|^2, \dots, |x_n|^2)  \quad \text{for all } x \in \R^d.
\end{equation}
To prove this, let us fix the unit vectors $e_j \in S^{d_j-1} \subset \R^{d_j}$ and 
define $h_0 \in C_c^{\infty}(\R^n)^{\Or(1) \times \cdots \times
\Or(1)}$ by $h_0(t_1, \dots, t_n) := h(t_1 e_1, \dots, t_n e_n)$. Since the algebra 
of real polynomials in $n$ variables, which are even in each variable, is generated 
(as an algebra) by the squares of the variables, a general result of G.~Schwarz 
\cite{Sch}  implies the existence of $\eta \in C_c^{\infty}(\R^n)$ such that 
$h_0(t_1, 
\dots, t_n) = \eta(t_1^2, \dots, t_n^2)$ for all $t_j \in \R$ and this function then 
also satisfies \eqref{eq:h-in-terms-of-eta}.

Now, for a function $u \in \Ss(\R^n)$ such that $\widehat{u}$ is compactly
supported, but otherwise unspecified for the moment, we write
\begin{align*}
f(x) &= h(x) g_{(ib_1, \dots, ib_n)}(x) \\
     &= (\eta-u)(|x_1|^2, \dots, |x_n|^2)g_{(ib_1, \dots, ib_n)}(x) + u(|x_1|^2, \dots, |x_n|^2)g_{(ib_1, \dots, ib_n)}(x)\\
     &= (\eta-u)(|x_1|^2, \dots, |x_n|^2)g_{(ib_1, \dots, ib_n)}(x) + \int_{\R^n}{\widehat{u}(\xi) g_{(ib_1 + 2 \xi_1, \dots, ib_n + 2 \xi_n)}(x) d \xi},
\end{align*}
where we applied the Fourier inversion on~$\R^n$ in the last step. The latter integral
belongs to~$\overline{W}$, regardless of the choice of~$u$, as long as
$\widehat{u}$ has compact support. This follows from integration
theory in Fr{\'e}chet spaces and continuity of the map $\H^n \rightarrow \Ss(\R^d)$,
$z \mapsto g_z$ (or alternatively by approximation via Riemann sums). It therefore suffices to show that the term involving $\eta-u$ can be made
arbitrarily small in the Schwartz topology. To see this, consider the linear map
$E:\Ss(\R^n) \rightarrow \Ss(\R^d)^{H}$, defined by $E \varphi(x) := \varphi(|x_1|^2,
\dots, |x_n|^2)$. It continuous for the Schwartz topology and multiplication by
$g_{(ib_1, \dots, i b_n)}$ is continuous. Since the space of $u \in \Ss(\R^n)$
such that $\widehat{u}$ has compact support is dense in $\Ss(\R^n)$ and~$E$ is
continuous, we can choose $u$ in such a way that $E(\eta-u)$ is in any
prescribed open zero neighborhood of $\Ss(\R^d)$. This finishes the proof of
Proposition \ref{prop:density-of-Gaussians}.

\end{document}